\newcounter{TmpEnumi}
\numberwithin{equation}{section}
\renewcommand{\today}{\number\day\space\ifcase\month\or   January\or February\or
   March\or April\or May\or June\or   July\or August\or September\or
   October\or November\or December\fi\   \number\year}
\theoremstyle{definition}
\newtheorem{thm}{Theorem}[section]
\newtheorem{lem}[thm]{Lemma}
\newtheorem{prp}[thm]{Proposition}
\newtheorem{dfn}[thm]{Definition}
\newtheorem{cor}[thm]{Corollary}
\newtheorem{cnj}[thm]{Conjecture}
\newtheorem{cnv}[thm]{Convention}
\newtheorem{rmk}[thm]{Remark}
\newtheorem{ntn}[thm]{Notation}
\newtheorem{qst}[thm]{Question}
\newcommand{\beq}{\begin{equation}}
\newcommand{\eeq}{\end{equation}}
\newcommand{\beqr}{\begin{eqnarray*}}
\newcommand{\eeqr}{\end{eqnarray*}}
\newcommand{\bal}{\begin{align*}}
\newcommand{\eal}{\end{align*}}
\newcommand{\bei}{\begin{itemize}}
\newcommand{\eei}{\end{itemize}}
\newcommand{\af}{\alpha}
\newcommand{\bt}{\beta}
\newcommand{\gm}{\gamma}
\newcommand{\dt}{\delta}
\newcommand{\ep}{\varepsilon}
\newcommand{\et}{\eta}
\newcommand{\io}{\iota}
\newcommand{\sm}{\sigma}
\newcommand{\ph}{\varphi}
\newcommand{\rh}{\rho}
\newcommand{\om}{\omega}
\newcommand{\ta}{\tau}
\newcommand{\cS}{{\mathcal{S}}}
\newcommand{\cY}{{\mathcal{Y}}}
\newcommand{\Z}{{\mathbb{Z}}}
\newcommand{\R}{{\mathbb{R}}}
\newcommand{\C}{{\mathbb{C}}}
\newcommand{\N}{{\mathbb{Z}}_{> 0}}
\newcommand{\Nz}{{\mathbb{Z}}_{\geq 0}}
\newcommand{\id}{{\operatorname{id}}}
\newcommand{\sint}{{\operatorname{int}}}
\newcommand{\dist}{{\operatorname{dist}}}
\newcommand{\diag}{{\operatorname{diag}}}
\newcommand{\card}{{\operatorname{card}}}
\newcommand{\Aut}{{\operatorname{Aut}}}
\newcommand{\rc}{{\operatorname{rc}}}
\newcommand{\mdim}{{\operatorname{mdim}}}
\newcommand{\Cu}{{\operatorname{Cu}}}
\newcommand{\dirlim}{\varinjlim}
\newcommand{\andeqn}{\,\,\,\,\,\, {\mbox{and}} \,\,\,\,\,\,}
\newcommand{\ts}[1]{{\textstyle{#1}}}
\newcommand{\Wolog}{Without loss of generality}
\newcommand{\ca}{C*-algebra}
\newcommand{\hm}{homomorphism}
\newcommand{\ct}{continuous}
\newcommand{\cfn}{continuous function}
\newcommand{\cms}{compact metric space}
\newcommand{\cpt}{compact Hausdorff}
\newcommand{\chs}{compact Hausdorff space}
\newcommand{\hme}{homeomorphism}
\newcommand{\mh}{minimal homeomorphism}
\newcommand{\rsz}[1]{\raisebox{0ex}[0.8ex][0.8ex]{$#1$}}
\newcommand{\ov}{\overline}
\newcommand{\SM}{\setminus}
\newcommand{\I}{\infty}
\newcommand{\E}{\varnothing}
\title[Radius of comparison and mean dimension]{The C*-algebra
  of a minimal homeomorphism with finite mean dimension has
  finite radius of comparison}
\author{N.~Christopher Phillips}
\date{14~May 2016}   % ???
\address{Department of Mathematics, University  of Oregon,
       Eugene OR 97403-1222, USA.}
\email[]{ncp@darkwing.uoregon.edu}
\subjclass[2000]{Primary 46L55;
 Secondary 37B05, 46L40, 54H20.}
\thanks{This material is based upon work supported by the
  US National Science Foundation under
  Grants DMS-1101742 and DMS-1501144.
  Some of the work was done during visits by the author to
  the Westf\"{a}lische Wilhelmsuniversit\"{a}t M\"{u}nster
  and the Institut Mittag-Leffler.
  He is grateful to these institutions
  for their hospitality and support.}
\begin{document}

\begin{abstract}
Let $X$ be an infinite compact metric space and let $h \colon X \to X$
be a minimal homeomorphism.
We prove that the radius of comparison
of the crossed product C*-algebra $C^* ( {\mathbb{Z}}, X, h)$
and the mean dimension of~$h$
are related by the inequality
${\operatorname{rc}} (C^* ( {\mathbb{Z}}, X, h))
  \leq 1 + 36 \cdot {\operatorname{mdim}} (h)$.
\end{abstract}

\maketitle

\indent
Let $X$ be an infinite compact metric space and let $h \colon X \to X$
be a homeomorphism.
The mean dimension ${\operatorname{mdim}} (h)$
was introduced in Definition~2.1 of~\cite{LW}
(after being suggested by Gromov).
It behaves well
under the assumption that $h$ does not have ``too many''
periodic points.
Its original application seems to have been to
prove that there are minimal homeomorphisms which do not
equivariantly embed in the shift on $[0, 1]^{\Z}$;
see Corollary~3.4 and Proposition~3.5 of~\cite{LW}.
The radius of comparison ${\operatorname{rc}} (A)$
of a unital \ca~$A$ was introduced in Definition~6.1 of \cite{Tm1}.
An important early application was as the invariant used to
distinguish many mutually nonisomorphic simple unital AH~algebras
with the same Elliott invariant.
(Of course,
these algebras do not have slow dimension growth.)
See Theorem~4.1 of~\cite{Tm7}.

It has been conjectured that if $h$ is minimal
then $\rc (C^* (\Z, X, h)) = \frac{1}{2} \mdim (h)$.
Some partial results are known.
If $\mdim (h) = 0$ then
it follows from Corollary~3.3 of~\cite{EllNiu}
% preprint version ???
that $\rc (C^* (\Z, X, h)) = 0$.
If $X$ has infinitely many connected components,
then $\rc (C^* (\Z, X, h)) \leq \frac{1}{2} \mdim (h)$
% by ??? of~\cite{HPT}.
% \label{T_5409_rcCP}
by~\cite{HPT}.
For a class of examples generalizing those of Giol and Kerr~\cite{GK},
including ones with arbitrarily large mean dimension,
we actually have
$\rc (C^* (\Z, X, h)) = \frac{1}{2} \mdim (h)$
(\cite{HPT}).
% (??? of~\cite{HPT}).
% \label{RC Product Systems} [section; label may change].
The difficulty in generalizing
the inequality
$\rc (C^* (\Z, X, h)) \leq \frac{1}{2} \mdim (h)$
from spaces with infinitely many connected components
lies in the technical details
of working with direct limits of recursive subhomogeneous algebras
instead of AH~algebras.
(This is discussed below.)
In this paper,
we use an equivariant embedding
of $(X, h)$ into a shift $K^{\Z}$
(provided by Theorem~5.1 of~\cite{Lnd})
to avoid those difficulties,
and provide a relatively easy proof of the weaker inequality
${\operatorname{rc}} (C^* ( {\mathbb{Z}}, X, h))
  \leq 1 + 36 \cdot {\operatorname{mdim}} (h)$
for an arbitrary \mh{}
of an arbitrary \cms.
In particular,
if ${\operatorname{mdim}} (h)$ is finite
then ${\operatorname{rc}} (C^* ( {\mathbb{Z}}, X, h))$ is finite.

The proof is based on the orbit breaking subalgebras
$C^* (\Z, X, h)_Y$
originally introduced by Putnam in~\cite{Pt1}.
(See Notation~\ref{N_5429_CPN} for the definition.)
In particular,
when $h$ is minimal and $h^k (Y) \cap Y = \E$
for all $k \neq 0$,
then, by Corollary~7.11 of~\cite{PhLg},
% \label{C_4819_AYStabLg}
$C^* (\Z, X, h)_Y$ is a stably large subalgebra of $C^* (\Z, X, h)$
in the sense of Definition~5.1 of~\cite{PhLg},
% \label{D_4619_StLg}
and so has the same radius of comparison as $C^* (\Z, X, h)$
(Theorem 6.14 of~\cite{PhLg}).
% \label{T_4814_RCEq}
When $\sint (Y)$ is nonempty,
$C^* (\Z, X, h)_Y$ is a recursive subhomogeneous algebra,
a fact originally observed in unpublished work with Q.~Lin
(see~\cite{LP}).
If $h^k (Y) \cap Y = \E$
for $k \neq 0$,
and $Y_0 \supset Y_1 \supset \cdots$
is a decreasing sequence of compact sets
% with nonempty interiors
such that $\bigcap_{n = 0}^{\I} Y_n = Y$,
then $C^* (\Z, X, h)_Y \cong \dirlim_n C^* (\Z, X, h)_{Y_n}$.
If $\sint (Y_n) \neq \E$ for all~$n$,
this expresses $C^* (\Z, X, h)_Y$ as a direct limit of
recursive subhomogeneous algebras.

Niu has given (Theorem~6.2 of~\cite{Niu})
an estimate on $\rc (A)$ when $A$ is
a simple unital AH~algebra with diagonal maps
(and under other minor technical conditions).
It is not clear whether one needs to assume
that the maps are diagonal.
For comparison,
we note that a simple AH~algebra with diagonal maps
necessarily has stable rank one
(Theorem~4.1 of~\cite{EHT}),
whereas without diagonal maps this need not be the case.
It seems rather messy to generalize Niu's result
to simple unital direct limits
of recursive subhomogeneous algebras
with diagonal maps,
although we expect that
the generalization should be true
(with a suitable definition of diagonal maps).
The generalization is expected to require
both that the gluing maps in the
recursive subhomogeneous decompositions be diagonal
and that the maps between
recursive subhomogeneous algebras in the direct system be diagonal.
We need to give a careful description
of a suitable recursive subhomogeneous decomposition
of $C^* (\Z, X, h)_Y$ when $\sint (Y) \neq \E$,
which includes the fact that the gluing maps
are diagonal.
Our proof avoids the need for consideration of
maps between recursive subhomogeneous algebras
except for an extremely special case.

For technical reasons,
we use a nonstandard choice of recursive subhomogeneous decomposition
for $C^* (\Z, X, h)_Y$.
Therefore we give a general scheme.
We assume a weaker condition on $h$ and~$Y$
than minimality of~$h$,
with the hope that it will be useful
for some nonminimal \hme{s},
for example, for ones which have a nontrivial minimal factor system.

In Section~\ref{Sec_Rokh},
we give a careful description
of the recursive subhomogeneous decomposition
of $C^* (\Z, X, h)_Y$
associated to a system of Rokhlin towers for~$Y$
when $Y$ is a sufficiently large subset of~$X$.
When $h$ is minimal and $X$ is infinite,
``sufficiently large'' just means that $\sint (Y) \neq \E$.
Section~\ref{Sec_StrctAY}
contains a preliminary result for this description,
on the existence of \hm{s}
from $C^* (\Z, X, h)_Y$ to algebras of the form
$C (Z, M_N)$ for closed subsets $Z \subset Y$.
All that one needs is $h^N (Z) \subset Y$.
In Section~\ref{Sec_OBSubSf},
we consider a minimal subsystem $(X, h)$ of the shift
on $K^{\Z}$,
and show that,
for suitable choices of~$Y$,
elements of $C^* (\Z, X, h)_Y$ can be approximated by
elements in the homomorphic image of a
recursive subhomogeneous algebra whose base spaces
are closed subspaces of finite products of copies of~$K$.
Section~\ref{Sec_Approx} contains the proof of our main result,
and Section~\ref{Sec_Prob} contains several open problems.

\section{Homomorphisms from $C^* (\Z, X, h)_Y$}\label{Sec_StrctAY}

\indent
Let $X$ be a compact Hausdorff space,
and let $h \colon X \to X$ be a homeomorphism.
Let $Y \subset Z$ be a closed subset,
and let $N \in \N$.
Suppose that $Z \subset Y$ is a nonempty closed subset
which satisfies $h^N (Z) \subset Y$.
We construct and give an explicit formula
for a canonical \hm{}
from the orbit breaking subalgebra $C^* (\Z, X, h)_Y$
(see Notation~\ref{N_5429_CPN})
to $C (Z, M_n)$.
We don't need to assume that $h$ is minimal,
we don't need to assume that $\sint (Y) \neq \E$,
and we don't need to assume that $N$ is the first
return time to $Y$ of any point in~$Z$,
just that $N$ is a return time.

\begin{ntn}\label{N_5429_GpRing}
Let $G$ be a discrete group,
let $A$ be a \ca,
and let $\af \colon G \to \Aut (A)$ be an action of $G$ on~$A$.
We identify $A$ with a subalgebra of $C^*_{\mathrm{r}} (G, A, \af)$
in the standard way.
We let $u_g \in M (C^*_{\mathrm{r}} (G, A, \af))$
be the standard unitary
corresponding to $g \in G$.
We let $A [G]$ denote the dense *-subalgebra
of $C^*_{\mathrm{r}} (G, A, \af)$
consisting of sums $\sum_{g \in S} a_g u_g$
with $S \subset G$ finite and $a_g \in A$ for $g \in S$.
\end{ntn}

\begin{ntn}\label{N_5429_CPN}
Let $X$ be a compact Hausdorff space,
and let $h \colon X \to X$ be a homeomorphism.
The induced automorphism $\af$ of $C (X)$
is given by
$\af (f) = f \circ h^{-1}$ for $f \in C (X)$.
We identify $C (X)$ with its standard image in
$C^* (\Z, X, h)$.
Let $u \in C^* (\Z, X, h)$
be the standard unitary of the crossed product.
Let $E \colon C^* (\Z, X, h) \to C (X)$
be the usual conditional expectation,
so that $u f u^* = f \circ h^{-1}$ for $f \in C (X)$.
Thus
$E \left( \sum_{n = - N}^N f_n u^n \right) = f_0$
for $f_{-N}, \, f_{- N + 1}, \, \ldots, \, f_N \in C (X)$.
For an open subset $U \subset X$
we identify $C_0 (U)$ with
\[
\big\{ f \in C_0 (X) \colon
     {\mbox{$f (x) = 0$ for all $x \in X \setminus U$}} \big\}
 \subset C (X).
\]
For a nonempty closed subset $Y \subset X$,
we let
\[
C^* (\Z, X, h)_Y = C^* \big( C (X), \, C_0 (X \setminus Y) u \big)
  \subset C^* (\Z, X, h)
\]
be the $Y$-orbit breaking subalgebra of
Definition~7.3 of~\cite{PhLg}.
% \label{D-2623VSubalg}
\end{ntn}

We have used a different convention from that used in a number
of other papers,
where one takes
%
% \begin{equation}\label{Eq:OldOB}
\[
C^* (\Z, X, h)_Y = C^* \big( C (X), \, u C_0 (X \setminus Y) \big).
\]
% \end{equation}
%
Our choice has the advantage that,
when used in connection with Rokhlin towers,
the bases of the towers are subsets of~$Y$
rather than of $h (Y)$.
See the discussion after Definition~7.3 of~\cite{PhLg}.

We recall the identification of $C^* (\Z, X, h)_Y$
from~\cite{PhLg}.

\begin{prp}[Proposition~7.5 of~\cite{PhLg}]\label{P_5429_CharOB}
% \label{P-2816CharOB}
Let $X$ be a \chs{}, let $h \colon X \to X$ be a \hme,
and let $Y \subset X$ be a nonempty closed subset.
Let $u \in C^* (\Z, X, h)$,
$E \colon C^* (\Z, X, h) \to C (X)$,
and $C^* (\Z, X, h)_Y \subset C^* (\Z, X, h)$
be as in Notation~\ref{N_5429_CPN}.
For $n \in \Z$, set
\[
Y_n = \begin{cases}
   \bigcup_{j = 0}^{n - 1} h^j (Y)    & \hspace{3em} n > 0
        \\
   \varnothing                        & \hspace{3em} n = 0
       \\
   \bigcup_{j = 1}^{- n} h^{-j} (Y)   & \hspace{3em} n < 0.
\end{cases}
\]
Then
\begin{equation}\label{Eq_5429_CharOB}
C^* (\Z, X, h)_Y
 = \big\{ a \in C^* (\Z, X, h) \colon
    {\mbox{$E (a u^{-n}) \in C_0 (X \setminus Y_n)$
          for all $n \in \Z$}} \big\}
\end{equation}
and
\begin{equation}\label{Eq_5429_Dense}
{\overline{C^* (\Z, X, h)_Y \cap C (X) [\Z]}} = C^* (\Z, X, h)_Y.
\end{equation}
\end{prp}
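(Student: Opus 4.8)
The plan is to prove that $D := C^* (\Z, X, h)_Y$ equals the right-hand side $B$ of~(\ref{Eq_5429_CharOB}), and then to obtain~(\ref{Eq_5429_Dense}) as a by-product. First I would record the bookkeeping facts the computation relies on: for $g \in C (X)$ and $k \in \Z$ one has $u^k g u^{-k} = g \circ h^{-k}$; the sets $Y_n$ satisfy $h^{-k} (Y_k) = Y_{-k}$; and, writing $Y_k = \bigcup_{j \in I_k} h^j (Y)$ with $I_k = \{ 0, 1, \ldots, k - 1 \}$ for $k > 0$, $I_0 = \varnothing$, and $I_k = \{ k, k + 1, \ldots, - 1 \}$ for $k < 0$, one checks case by case on the signs of $m$ and $n$ the ``cocycle inclusion'' $I_{m + n} \subseteq I_m \cup (m + I_n)$, hence $Y_{m + n} \subseteq Y_m \cup h^m (Y_n)$.

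To see $D \subseteq B$, note that $a \mapsto E (a u^{-n})$ is norm-continuous and $C_0 (X \setminus Y_n)$ is norm-closed, so $B$ is a closed linear subspace; it therefore suffices to put the $*$-subalgebra generated by $C (X)$ and $C_0 (X \setminus Y) u$ into $B$. Starting from the generators $C (X)$ (``degree $0$''), $C_0 (X \setminus Y) u = C_0 (X \setminus Y_1) u$ (``degree $1$''), and $\big( C_0 (X \setminus Y) u \big)^* = C_0 (X \setminus Y_{-1}) u^{-1}$ (``degree $-1$''), an induction on word length shows every monomial has the form $F u^d$ with $d$ its total degree and $F \in C_0 (X \setminus Y_d)$: appending a factor $G u^e$ turns $(F' u^{d'}) (G u^e)$ into $F' (G \circ h^{- d'}) u^{d' + e}$, and $F' (G \circ h^{- d'})$ vanishes on $Y_{d'} \cup h^{d'} (Y_e)$, which contains $Y_{d' + e}$. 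Since $E \big( (F u^d) u^{-n} \big)$ is $F \in C_0 (X \setminus Y_n)$ for $n = d$ and $0$ otherwise, each such monomial lies in $B$; hence $D \subseteq B$.

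For the opposite inclusion I would first prove $C_0 (X \setminus Y_n) u^n \subseteq D$ for all $n \in \Z$. The case $n = 0$ is just $C (X) \subseteq D$. For $n > 0$, the description $Y_n = \bigcup_{j = 0}^{n - 1} h^j (Y)$ lets one factor any $F \in C_0 (X \setminus Y_n)$ as an $n$-fold product $F = F_0 F_1 \cdots F_{n - 1}$ with each $F_j$ vanishing on $Y_n$ (for instance $F_0 = |F|^{1/n} \sgn F$ and $F_j = |F|^{1/n}$ for $1 \leq j \leq n - 1$, all continuous and vanishing where $F$ vanishes); writing $F_j = g_j \circ h^{-j}$ with $g_j \in C_0 (X \setminus Y)$, the telescoping computation $g_0 u \, g_1 u \cdots g_{n - 1} u = g_0 (g_1 \circ h^{-1}) \cdots (g_{n - 1} \circ h^{- (n - 1)}) u^n = F u^n$ shows $F u^n \in D$. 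For $n < 0$, one reduces to the previous case by taking adjoints: $(F u^n)^* = (\ov F \circ h^{n}) u^{-n}$ with $\ov F \circ h^{n} \in C_0 (X \setminus Y_{-n})$ (using $h^{-n} (Y_n) = Y_{-n}$). Now let $a \in B$. The Fej\'er means
\[
\sigma_m (a) \;=\; \sum_{n = - m}^{m} \Big( 1 - \tfrac{| n |}{m + 1} \Big) E (a u^{-n}) \, u^n , \qquad m \in \Nz ,
\]
lie in $D \cap C (X) [\Z]$, since each Fourier coefficient is a real multiple of some $E (a u^{-n}) \in C_0 (X \setminus Y_n)$ and we have just shown $C_0 (X \setminus Y_n) u^n \subseteq D$. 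Moreover $\sigma_m (a) \to a$: the $\sigma_m$ are contractive because $n \mapsto \big( 1 - | n | / (m + 1) \big)_+$ is a positive definite function on $\Z$ with value $1$ at $0$, and on the dense subalgebra $C (X) [\Z]$ they visibly converge to the identity. Hence $a \in \ov{D \cap C (X) [\Z]} \subseteq D$, so $B \subseteq D$. This gives $D = B$; and since every element of $D = B$ is the limit of its Fej\'er means, which lie in $D \cap C (X) [\Z]$, it also gives~(\ref{Eq_5429_Dense}).

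The one place I expect to have to be careful is the inclusion $C_0 (X \setminus Y_n) u^n \subseteq D$: the combinatorial description $Y_n = \bigcup_{j = 0}^{n - 1} h^j (Y)$ is precisely what allows a function vanishing on $Y_n$ to be split into a product of $n$ functions matched to the successive copies of $u$, and getting the signs and shifts right there (together with correctly invoking the standard Fej\'er-summation fact) is the main thing to watch. Everything else is routine manipulation with the conditional expectation and the two-sided index convention for $Y_n$.
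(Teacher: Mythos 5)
The paper does not prove this proposition; it is cited verbatim as Proposition~7.5 of~\cite{PhLg}, so there is no in-text argument to compare against. Your proof is correct and follows the standard route one would expect for this result: the inclusion $C^*(\Z, X, h)_Y \subseteq B$ via the ``cocycle'' inclusion $Y_{m+n} \subseteq Y_m \cup h^m(Y_n)$ applied inductively to monomials in the generators, the reverse inclusion by showing $C_0(X \setminus Y_n)\, u^n \subseteq C^*(\Z, X, h)_Y$ through a telescoping $n$-fold factorization (with the polar-decomposition trick $F_0 = |F|^{1/n}\operatorname{sgn} F$ correctly handling continuity at zeros), and then Fej\'er summation to deduce both the characterization~(\ref{Eq_5429_CharOB}) and the density statement~(\ref{Eq_5429_Dense}) in one stroke; the key arithmetic identities $h^{-k}(Y_k) = Y_{-k}$ and $Y_{m+n} \subseteq Y_m \cup h^m(Y_n)$ are all verified correctly.
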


\begin{cnv}\label{Cv_5615_MatUnit}
In this section,
we index the positions in an $n \times n$ matrix
using $\{ 0, 1, \ldots, n - 1 \}$ instead of the usual
$\{ 1, 2, \ldots, n \}$.
Thus,
the standard system of matrix units for~$M_n$
is $(e_{j, k})_{j, k = 0, 1, \ldots, n - 1}$.
\end{cnv}

The following result is related to part of a proof
% by ??? of~\cite{HPT}.
in~\cite{HPT}.
% \label{L_5330_IsoCcY}

\begin{prp}\label{L_5429_FromAY}
Let $X$ be a \chs{} and let $h \colon X \to X$ be a \hme.
Let $Z \subset Y \subset X$ be nonempty closed subsets
and let $N \in \N$.
Suppose $h^N (Z) \subset Y$.
Then there exists a unique unital \hm{}
$\gm_{N, Z} \colon C^* (\Z, X, h)_Y \to M_N (C (Z))$
such that for $f \in C (X)$ we have
\[
\gm_{N, Z} (f)
= \diag \big( f |_Z, \, (f \circ h) |_Z,
  \, (f \circ h^{2}) |_Z, \, \ldots,
   \, (f \circ h^{N - 1}) |_Z \big)
\]
and,
taking
\[
w = \left( \begin{matrix}
  0     &  0     & \cdots & \cdots & \cdots &  0        \\
  1     &  0     & \ddots & \ddots & \ddots & \vdots    \\
  0     &  1     & \ddots & \ddots & \ddots & \vdots    \\
 \vdots & \ddots & \ddots & \ddots & \ddots & \vdots    \\
 \vdots & \ddots & \ddots &  1     &  0     &  0        \\
  0     & \cdots & \cdots &  0     &  1     &  0
\end{matrix} \right) \in M_N \subset M_N (C (Z)),
\]
for $f \in C_0 (X \setminus Y)$ we have
\[
\gm_{N, Z} (f u)
 = \diag \big( f |_Z, \, (f \circ h) |_Z,
  \, (f \circ h^{2}) |_Z, \, \ldots,
   \, (f \circ h^{N - 1}) |_Z \big) w.
\]
If $a \in C^* (\Z, X, h)_Y$
then
\begin{equation}\label{Eq_5429_RhNZ}
\gm_{N, Z} (a)
 = \sum_{n = 0}^N \gm_{N, Z} (E (a u^{-n})) w^n
    + \sum_{n = - N}^{- 1} \gm_{N, Z} (E (a u^{-n})) (w^*)^{-n}.
\end{equation}
Moreover, if $a \in C^* (\Z, X, h)_Y$
has the formal series
$\sum_{n = - \I}^{\I} a_n u^n$,
with $a_n \in C (X)$ for $n \in \Z$,
then, following Convention~\ref{Cv_5615_MatUnit},
for $x \in Z$ and
$j, k \in \{ 0, 1, \ldots, N - 1 \}$
we have
\begin{equation}\label{Eq_5615_ByEntry}
\gm_{N, Z} (a) (x)_{j, k} = a_{j - k} (h^j (x)).
\end{equation}
\end{prp}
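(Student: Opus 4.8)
The plan is to build $\gm_{N, Z}$ fibrewise over $Z$, realizing it at each $x \in Z$ as the compression of a covariant representation of $C^* (\Z, X, h)$ to an $N$-dimensional reducing subspace. Uniqueness costs nothing: $C (X)$ and $C_0 (X \SM Y) u$ generate $C^* (\Z, X, h)_Y$ (Notation~\ref{N_5429_CPN}), and the prescribed values of $\gm_{N, Z}$ on them determine it. For existence, fix $x \in Z$, let $(\dt_k)_{k \in \Z}$ be the standard orthonormal basis of $\ell^2 (\Z)$, and take the covariant pair $(\pi_x, U)$ with $\pi_x (f) \dt_k = f (h^k (x)) \dt_k$ for $f \in C (X)$ and $U \dt_k = \dt_{k + 1}$. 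One checks $U \pi_x (f) U^* = \pi_x (f \circ h^{-1})$, so this pair is covariant for the action of Notation~\ref{N_5429_CPN} and integrates to a representation $\sm_x$ of $C^* (\Z, X, h)$ with $\sm_x|_{C (X)} = \pi_x$ and $\sm_x (u) = U$. Evaluating $\sm_x$ on finite sums $a = \sum_n a_n u^n$ with $a_n \in C (X)$, where $\sm_x (a) \dt_k = \sum_n a_n (h^{k + n} (x)) \dt_{k + n}$, and extending by the density statement~\eqref{Eq_5429_Dense} and continuity, one gets for every $a \in C^* (\Z, X, h)_Y$
\[
\langle \dt_{j}, \, \sm_x (a) \dt_{k} \rangle
 = \big( E (a u^{-(j - k)}) \big) \big( h^{j} (x) \big)
 \qquad (j, k \in \Z).
\]

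The core of the argument is that $H_x := \spn \{ \dt_0, \dt_1, \ldots, \dt_{N - 1} \}$ reduces $\sm_x \big( C^* (\Z, X, h)_Y \big)$. As $C^* (\Z, X, h)_Y$ is a $*$-subalgebra, it suffices to show invariance, i.e.\ that for $a \in C^* (\Z, X, h)_Y$ and $k \in \{ 0, \ldots, N - 1 \}$ one has $\sm_x (a) \dt_k \in H_x$; by the coefficient formula this means $\big( E (a u^{-m}) \big) (h^{k + m} (x)) = 0$ whenever $k + m \notin \{ 0, \ldots, N - 1 \}$. If $k + m \geq N$ then $m \geq 1$, so by~\eqref{Eq_5429_CharOB} the function $E (a u^{-m})$ vanishes on $\bigcup_{l = 0}^{m - 1} h^l (Y)$; with $l := k + m - N \in \{ 0, \ldots, m - 1 \}$ one has $h^{k + m} (x) = h^l (h^N (x)) \in h^l (Y)$ because $h^N (x) \in h^N (Z) \S Y$, so the value is $0$. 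If $k + m \leq - 1$ then $m \leq - 1$, so $E (a u^{-m})$ vanishes on $\bigcup_{l = 1}^{- m} h^{-l} (Y)$; with $l := - (k + m) \in \{ 1, \ldots, - m \}$ one has $h^l (h^{k + m} (x)) = x \in Y$ because $Z \S Y$, hence $h^{k + m} (x) \in h^{-l} (Y)$ and the value is $0$. Thus $H_x$ is invariant, and being invariant under a $*$-closed set of operators it is reducing.

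Granting this, $a \mapsto \sm_x (a)|_{H_x}$ is a unital \hm{} from $C^* (\Z, X, h)_Y$ into $B (H_x)$, which I identify with $M_N$ via the basis $(\dt_0, \ldots, \dt_{N - 1})$, following Convention~\ref{Cv_5615_MatUnit}. Putting $\gm_{N, Z} (a) (x) = \sm_x (a)|_{H_x}$, the coefficient formula gives~\eqref{Eq_5615_ByEntry} at once, and since $x \mapsto \big( E (a u^{-(j - k)}) \big) (h^j (x))$ is continuous on $Z$ we get $\gm_{N, Z} (a) \in M_N (C (Z))$; so $\gm_{N, Z}$ is a unital \hm. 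On $f \in C (X)$ it is the diagonal of $\sm_x (f)$, giving the stated formula; on $f u$ with $f \in C_0 (X \SM Y)$ it comes from $\sm_x (f u) \dt_k = f (h^{k + 1} (x)) \dt_{k + 1}$, where the top index $k = N - 1$ contributes nothing since $f (h^N (x)) = 0$ (as $h^N (x) \in Y$), and this matches $\diag \big( f|_Z, (f \circ h)|_Z, \ldots, (f \circ h^{N - 1})|_Z \big) w$. Finally~\eqref{Eq_5429_RhNZ} is a matrix-arithmetic identity verified entrywise against~\eqref{Eq_5615_ByEntry}, using that $\gm_{N, Z} (E (a u^{-n}))$ is diagonal and $w^N = (w^*)^N = 0$.

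I expect the reducing-subspace step to be the only real obstacle: one has to see that the orbit-breaking conditions $E (a u^{-n}) \in C_0 (X \SM Y_n)$ annihilate exactly those matrix coefficients of $\sm_x (a)$ that would carry $H_x$ out of itself. This is precisely where both hypotheses enter --- $x \in Y$ (from $Z \S Y$) blocks escape below index $0$ and $h^N (x) \in Y$ (from $h^N (Z) \S Y$) blocks escape above index $N - 1$ --- and it is also the place where the non-standard left/right convention for the orbit-breaking subalgebra in Notation~\ref{N_5429_CPN} must be tracked with care.
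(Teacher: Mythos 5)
Your proof is correct, and it takes a genuinely different route from the paper's. The paper verifies directly that the formula~(\ref{Eq_5429_RhNZ}) defines a $*$-homomorphism on the dense subalgebra $C^* (\Z, X, h)_Y \cap C (X) [\Z]$, by checking adjoint preservation and then running a six-case analysis of multiplicativity on monomials $f u^m$, $g u^n$, according to the signs of $m$, $n$, and $m + n$; the computational heart is the commutation relation~(\ref{Eq_5429_Comm}) together with the explicit ``truncation error'' identities~(\ref{Eq_5429_Err}) and~(\ref{Eq_5429_Err2}), which are then shown to vanish because the orbit-breaking condition forces the relevant coefficient functions to vanish on the relevant translates of~$Y$. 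Your argument replaces all of this with a single structural observation: for each $x \in Z$, the orbit representation $\sm_x$ of $C^* (\Z, X, h)$ on $\ell^2 (\Z)$ has the property that $H_x = \spn \{ \dt_0, \ldots, \dt_{N-1} \}$ is invariant (hence reducing) for the $*$-subalgebra $C^* (\Z, X, h)_Y$, and the two hypotheses $x \in Y$ and $h^N (x) \in Y$ are exactly what the characterization~(\ref{Eq_5429_CharOB}) needs to kill the matrix coefficients that would leak out of $H_x$ below index $0$ or above index $N - 1$. Compression to a reducing subspace is automatically a unital $*$-homomorphism, so multiplicativity costs nothing; continuity of the matrix entries in $x$ then puts $\gm_{N, Z} (a)$ in $M_N (C (Z))$, and formulas~(\ref{Eq_5615_ByEntry}) and~(\ref{Eq_5429_RhNZ}) read off directly. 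What your approach buys is conceptual clarity --- it makes visible why the orbit-breaking subalgebra admits this finite-dimensional quotient at each return path --- at the cost of invoking the covariant representation machinery; what the paper's approach buys is that it stays entirely inside the algebraic formalism already set up in Notation~\ref{N_5429_CPN} and Proposition~\ref{P_5429_CharOB}, which is convenient given how those ingredients recur later in the paper.
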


We don't need to assume that $h$ is minimal,
that $\sint (Y) \neq \E$,
or that $N$ is the first return time of $Z$ to~$Y$.
We don't even need to assume
that the first return time to~$Y$
is constant on~$Z$.

Expressed in terms of matrices,
if $a \in C^* (\Z, X, h)_Y$
has the formal series
$\sum_{n = - \I}^{\I} a_n u^n$
and $x \in Z$,
the formula for $\gm_{N, Y} (a) (x)$ is
\[
% \gm_{N, Y} (a) (x)
% =
\left( \begin{matrix}
 a_0 (x) & a_{-1} (x)  & a_{-2} (x) &
     \cdots  &  a_{- N + 1} (x)        \\
 a_1 ( h (x)) & a_0 ( h (x))  & a_{-1} ( h (x)) &
     \cdots  &  a_{- N + 2} ( h (x))        \\
 a_2 ( h^2 (x)) & a_{1} ( h^2 (x))  & a_{0} ( h^2 (x)) &
     \cdots  &  a_{- N + 3} ( h^2 (x))        \\
 \vdots & \vdots & \vdots & \ddots & \vdots   \\
 a_{N - 1} ( h^{N - 1} (x)) & a_{N - 2} ( h^{N - 1} (x))  &
   a_{N - 3} ( h^{N - 1} (x)) &
     \cdots  &  a_{0} ( h^{N - 1} (x))        \\
\end{matrix} \right).
\]

\begin{proof}[Proof of Proposition~\ref{L_5429_FromAY}]
By the definition of $C^* (\Z, X, h)_Y$,
the \hm{} $\gm_{N, Z}$ is unique if it exists.
Therefore it suffices to show that the
formula~(\ref{Eq_5429_RhNZ})
defines a unital \hm.
This formula obviously defines a \ct{} linear map
$\gm_{N, Z} (a) \colon C^* (\Z, X, h)_Y \to M_N (C (Z))$.
Therefore,
by Proposition~\ref{P_5429_CharOB},
and taking $Y_n$ to be as there,
it suffices to
take $a, b \in C^* (\Z, X, h)$
of the form $a = f u^m$ and $b = g u^n$
with $m, n \in \Z$,
$f \in C_0 (X \SM Y_m)$,
and $g \in C_0 (X \SM Y_n)$,
% with $m, n, f, g$ otherwise arbitrary,
and show that
$\gm_{N, Z} (a b) = \gm_{N, Z} (a) \gm_{N, Z} (b)$
and $\gm_{N, Z} (a)^* = \gm_{N, Z} (a^*)$.

To simplify the notation,
we define $\gm \colon C (X) \to M_N (C (Z))$
by
\[
\gm (f)
= \diag \big( f |_Z, \, (f \circ h) |_Z,
  \, (f \circ h^{2}) |_Z, \, \ldots,
   \, (f \circ h^{N - 1}) |_Z \big)
\]
for $f \in C (X)$.
(Thus,
$\gm = \gm_{N, Z} |_{C (X)}$.)
We label the standard matrix units in
$M_N \subset M_n (C (Z)) = M_N \otimes C (Z)$
as $e_{j, k}$ for $j, k = 0, 1, \ldots, N - 1$
(following Convention~\ref{Cv_5615_MatUnit}).
We make the following preliminary computations:
for $g \in C (X)$ and $n = 0, 1, \ldots, N - 1$,
we have
\begin{equation}\label{Eq_5429_gsn}
\gm (g) w^n
 = \sum_{k = 0}^{N - n - 1}
  e_{n + k, \, k} \otimes (g \circ h^{n + k}) |_Z
\end{equation}
(matrix entries
$g \circ h^{n} |_Z, \, g \circ h^{n + 1} |_Z, \, \ldots$
in the diagonal starting with the $(0, n)$ entry),
\begin{equation}\label{Eq_5429_sng}
w^n \gm (g)
 = \sum_{k = 0}^{N - n - 1}
  e_{n + k, \, k} \otimes (g \circ h^{k}) |_Z
\end{equation}
(as above but starting with $g |_Z$ rather than
$g \circ h^{n} |_Z$),
\begin{equation}\label{Eq_5429_gstarn}
\gm (g) (w^*)^n
 = \sum_{k = 0}^{N - n - 1}
  e_{k, \, n + k} \otimes (g \circ h^{k}) |_Z
\end{equation}
(matrix entries $g |_Z, \, g \circ h |_Z, \, \ldots$
in the diagonal starting with the $(n, 0)$ entry),
and
\begin{equation}\label{Eq_5429_starng}
(w^*)^n \gm (g)
 = \sum_{k = 0}^{N - n - 1}
  e_{k, \, n + k} \otimes (g \circ h^{n + k}) |_Z
\end{equation}
(as above but starting with $g \circ h^{n} |_Z$ rather than
$g |_Z$).

To check that
$\gm_{N, Z} (a)^* = \gm_{N, Z} (a^*)$,
we need only consider
$a = f u^m$
with $m \geq 0$,
since if $m < 0$
then $f u^m = \big[ ( {\ov{f}} \circ h^{m} ) u^{-m} \big]^*$.
In this case,
if $m \leq N - 1$
we have
\[
\gm_{N, Z} (a)^*
 = [\gm (f) w^n]^*
 = (w^*)^n \gm ( {\ov{f}} )
\]
and
\[
\gm_{N, Z} (a^*)
 = \gm_{N, Z} (u^{-n} {\ov{f}} )
 = \gm_{N, Z} ([{\ov{f}} \circ h^{n}] u^{-n} )
 = \gm ({\ov{f}} \circ h^{n}) (w^*)^n.
\]
To evaluate the first put $g = {\ov{f}}$ in~(\ref{Eq_5429_starng}).
For the second put $g = {\ov{f}} \circ h^{n}$
in~(\ref{Eq_5429_gstarn}),
getting the same element of $M_N (C (Z))$.
If $m \geq N$ then both $\gm_{N, Z} (a)$ and $\gm_{N, Z} (a^*)$
are zero.

We now check multiplicativity.
There are six cases to consider:
\begin{enumerate}
\item\label{Pf_5429_Mult_pp}
$m, n \geq 0$.
\item\label{Pf_5429_Mult_pnp}
$m \geq 0$, $n \leq 0$, and $m + n \geq 0$.
\item\label{Pf_5429_Mult_npp}
$m \leq 0$, $n \geq 0$, and $m + n \geq 0$.
\item\label{Pf_5429_Mult_pnn}
$m \geq 0$, $n \leq 0$, and $m + n \leq 0$.
\item\label{Pf_5429_Mult_npn}
$m \leq 0$, $n \geq 0$, and $m + n \leq 0$.
\item\label{Pf_5429_Mult_nn}
$m, n \leq 0$.
\end{enumerate}
Since we already know that $\gm_{N, Z}$ preserves
adjoints,
by taking adjoints we can obtain
case~(\ref{Pf_5429_Mult_nn})
from case~(\ref{Pf_5429_Mult_pp}),
case~(\ref{Pf_5429_Mult_pnn})
from case~(\ref{Pf_5429_Mult_pnp}),
and case~(\ref{Pf_5429_Mult_npn})
from case~(\ref{Pf_5429_Mult_npp}).
Accordingly,
we only consider the first three cases.

By comparing (\ref{Eq_5429_gsn})
and~(\ref{Eq_5429_sng}),
we get
\begin{equation}\label{Eq_5429_Comm}
w^m \gm (g) = \gm ( g \circ h^{-m}) w^m
\end{equation}
for $m = 0, 1, \ldots, N - 1$ and all $g \in C (X)$.
Accordingly,
if $m, n \geq 0$ and $m + n \leq N - 1$,
we get
\begin{align*}
\gm_{N, Z} (a b)
& = \gm_{N, Z} (f [g \circ h^{-m}] u^{m + n})
\\
& = \gm (f [g \circ h^{-m}]) w^{m + n}
  = \gm (f) w^m \gm (g) w^n
  = \gm_{N, Z} (a) \gm_{N, Z} (b).
\end{align*}
If instead $m + n \geq N$,
one easily checks that
$\gm_{N, Z} (a b) = 0 = \gm_{N, Z} (a) \gm_{N, Z} (b)$.
This is case~(\ref{Pf_5429_Mult_pp}).

We prove case~(\ref{Pf_5429_Mult_pnp}).
It is helpful to rewrite it as
$a = f u^m$ and $b = g u^{-n}$
with $m, n \geq 0$ and $m - n \geq 0$.
First suppose $m \leq N - 1$.
Then
\[
\gm_{N, Z} (a b)
 = \gm_{N, Z} (f [g \circ h^{-m}] u^{m - n})
 = \gm (f [g \circ h^{-m}]) w^{m - n}
\]
and,
using~(\ref{Eq_5429_Comm}) at the second step,
\[
\gm_{N, Z} (a) \gm_{N, Z} (b)
 = \gm (f) w^m \gm (g) (w^*)^n
 = \gm (f [g \circ h^{-m}]) w^m (w^*)^n.
\]

A computation using~(\ref{Eq_5429_gsn}),
and which should be compared with the formula
\[
w^{m - n} - w^m (w^*)^n
 = e_{m - n, \, 0} + e_{m - n + 1, \, 1} + \cdots + e_{m - 1, \, n - 1},
\]
shows that
\begin{equation}\label{Eq_5429_Err}
\gm_{N, Z} (a b) - \gm_{N, Z} (a) \gm_{N, Z} (b)
 = \sum_{k = 0}^{n - 1} e_{m - n + k, \, k}
      \otimes (f \circ h^{m - n + k}) |_Z (g \circ h^{- n + k}) |_Z.
\end{equation}
Since $f u^m \in C^* (\Z, X, h)_Y$,
Proposition~\ref{P_5429_CharOB} implies that
$f$ vanishes on
the sets $Y, \, h (Y), \, \ldots, \, h^{m - 1} (Y)$.
Since $m - n \geq 0$,
it follows that the expression
on the right hand side of~(\ref{Eq_5429_Err})
is zero.

Now suppose $m \geq N$.
Then $\gm_{N, Z} (a) = 0$,
so $\gm_{N, Z} (a) \gm_{N, Z} (b) = 0$.
Also
$a b = f (g \circ h^{- m}) u^{m - n}$.
If $m - n \geq N$ then $\gm_{N, Z} (a b) = 0$,
and we are done.
Otherwise, use Proposition~\ref{P_5429_CharOB}
to see that $f$ vanishes on the sets $Y, h (Y), \ldots, h^{m - 1} (Y)$.
In particular,
$f$ vanishes on the sets
$h^{m - n} (Z), \, h^{m - n + 1} (Z), \, \ldots, \, h^{N - 1} (Z)$,
so $f (g \circ h^{- m})$ also vanishes on these sets.
Thus $\gm_{N, Z} (a b) = 0$ by~(\ref{Eq_5429_gsn}).

Finally,
we prove case~(\ref{Pf_5429_Mult_npp}).
We rewrite this case as
$a = f u^{- m}$ and $b = g u^{n}$
with $m, n \geq 0$ and $n - m \geq 0$.
First suppose $n \leq N - 1$.
Then
\[
\gm_{N, Z} (a b)
 = \gm_{N, Z} (f [g \circ h^{m}] u^{n - m})
 = \gm (f [g \circ h^{m}]) w^{n - m}
\]
and,
using the adjoint of~(\ref{Eq_5429_Comm}) at the second step,
\[
\gm_{N, Z} (a) \gm_{N, Z} (b)
 = \gm (f) (w^*)^m \gm (g) w^n
 = \gm (f [g \circ h^{m}]) (w^*)^m w^n.
\]
A computation using~(\ref{Eq_5429_gsn}),
and which should be compared with the formula
\[
w^{n - m} - (w^*)^m w^n
 = e_{N - n, \, N - m} + e_{N - n + 1, \, N - m + 1}
     + \cdots + e_{N - n + m - 1, \, N - 1},
\]
shows that
\begin{equation}\label{Eq_5429_Err2}
\gm_{N, Z} (a b) - \gm_{N, Z} (a) \gm_{N, Z} (b)
 = \sum_{k = 0}^{m - 1} e_{N - n + k, \, N - m + k}
      \otimes (f \circ h^{N - m + k}) |_Z (g \circ h^{N + k}) |_Z.
\end{equation}
Since $g u^n \in C^* (\Z, X, h)_Y$,
Proposition~\ref{P_5429_CharOB} implies that
$g$ vanishes on
the sets $Y, \, h (Y), \, \ldots, \, h^{n - 1} (Y)$.
Since $h^N (Z) \subset Y$ and $m \leq n$,
it follows that the expression
on the right hand side of~(\ref{Eq_5429_Err2})
is zero.

Now suppose $n \geq N$.
Then $\gm_{N, Z} (b) = 0$,
so $\gm_{N, Z} (a) \gm_{N, Z} (b) = 0$.
Also
$a b = f (g \circ h^{m}) u^{n - m}$.
If $n - m \geq N$ then $\gm_{N, Z} (a b) = 0$,
and we are done.
Otherwise, use Proposition~\ref{P_5429_CharOB}
to see that $g$ vanishes on the sets $Y, h (Y), \ldots, h^{n - 1} (Y)$.
Since $h^N (Z) \subset Y$,
it follows that $g$ vanishes on the sets
$h^{N} (Z), \, h^{N + 1} (Z), \, \ldots, \, h^{N + n - 1} (Z)$.
In particular,
$g$ vanishes on the sets
$h^{n} (Z), \, h^{n + 1} (Z), \, \ldots, \, h^{N + m - 1} (Z)$.
So $f (g \circ h^{m})$ vanishes on the sets
$h^{n - m} (Z), \, h^{n - m + 1} (Z), \, \ldots, \, h^{N - 1} (Z)$,
whence $\gm_{N, Z} (a b) = 0$ by~(\ref{Eq_5429_gsn}).

The formula~(\ref{Eq_5615_ByEntry})
is an immediate consequence of the formula~(\ref{Eq_5429_RhNZ}).
\end{proof}

\section{Rokhlin towers}\label{Sec_Rokh}

\indent
Let $X$ be a compact metric space,
and let $h \colon X \to X$ be a homeomorphism.
Let $Y \subset X$ be a nonempty compact subset
such that
$Y \subset {\ov{\bigcup_{n = 1}^{\I} h^n (Y)}}$.
In this section,
we give a recursive subhomogeneous decomposition
(as in~\cite{Ph6}; see Definition~\ref{D_5619_Rshd} below)
of $C^* (\Z, X, h)_Y$
associated to a system of Rokhlin towers
for which the union of the bases is~$Y$.

If $h$ is minimal and $Y$ is a nonempty compact open
subset of~$X$,
then there is a standard system of Rokhlin towers associated to~$Y$,
in which the bases of the towers form a partition of~$Y$
and the levels of the towers form a partition of~$X$.
When $X$ is connected,
one can't choose such systems.
The right thing to do is to require the levels of the towers
to be closed,
and allow some overlap.
This overlap is the reason that $C^* (\Z, X, h)_Y$
is a recursive subhomogeneous algebra
rather than a homogeneous algebra.
The conventional choice has been to require
that the overlap be as small as possible;
this results in
the Rokhlin system of Lemma~\ref{L_5623_StdTowers} below.
For technical reasons,
the Rokhlin system of Lemma~\ref{L_6224_FullTowers} below
is more suitable for our purposes.
We therefore define a general system of Rokhlin towers.
For possible use elsewhere, the definition
is set up to accommodate more general situations
than \mh{s} and sets with nonempty interior,
although that is the case of interest here.
The purpose of the condition
$Y \subset {\ov{\bigcup_{n = 1}^{\I} h^n (Y)}}$
is to rule out examples in which,
for instance, $h^n (Y)$ is a proper subset of~$Y$
for some $n > 0$.

We continue to follow Convention~\ref{Cv_5615_MatUnit},
indexing matrix entries starting with $0$ instead of~$1$.

We list here,
for convenient reference,
the notation that will be introduced in this section,
together with several previously defined items.
The items in this list
mostly depend on a system $\cY$ of Rokhlin towers.
When dealing with only one such system,
we often suppress it in the notation.
(See Convention~\ref{N_5618_SuppressY}.)

\begin{rmk}\label{R_5618_Summ}
Let $X$ be a compact metric space,
and let $h \colon X \to X$ be a homeomorphism.
Let $Y \subset X$ be a nonempty compact subset
such that
$Y \subset {\ov{\bigcup_{n = 1}^{\I} h^n (Y)}}$.
Let $\cY$ be a Rokhlin system for $(X, h, Y)$.
Then:
\begin{enumerate}
\item\label{R_5618_RetTime}
For $x \in X$, $r_Y$ is the first return time of $x$ to~$Y$
(Definition~\ref{D_5429_RTime}).
\item\label{R_5618_Map}
For $Z \subset Y$ closed
and $N \in \N$ such that $h^N (Z) \subset Y$,
$\gm_{N, Z} \colon C^* (\Z, X, h)_Y \to M_N (C (Z))$
is the \hm{} of Proposition~\ref{L_5429_FromAY}.
\item\label{R_5618_Summ_m}
$m (\cY)$ is the number of towers in the Rokhlin system~$\cY$,
with the count starting at~$0$.
(See Definition~\ref{D_5428_RTY}.)
\item\label{R_5618_Summ_rl}
$r_l (\cY)$ is the height of the $l$-th tower of~$\cY$.
Its levels are indexed from $0$ to $r_l (\cY) - 1$.
(See Definition~\ref{D_5428_RTY}.)
\item\label{R_5618_Summ_T_l}
$T_l (\cY)$ is the base of the $l$-th tower of~$\cY$.
(See Definition~\ref{D_5428_RTY}.)
\item\label{R_5618_Summ_Tl0}
$T_l^0 (\cY)$
is the ``interior'' of the base of the $l$-th tower of~$\cY$,
the points in $T_l (\cY)$ that are not in $T_i (\cY)$ for any $i < l$.
(See Equation~(\ref{Eq_5921_NewStar}) in Definition~\ref{D_5428_RTY}.)
\item\label{R_5618_Summ_DlInt}
$D_l (\cY)$ is the ``boundary'' of the base of the $l$-th tower,
the points in $T_l (\cY)$ that are in $T_i (\cY)$ for some $i < l$.
(See Equation~(\ref{Eq_5921_NewStar}) in Definition~\ref{D_5428_RTY}.)
\item\label{R_5618_Summ_JY}
$J (\cY)$ is the set of labels of towers in the system.
(See Equation~(\ref{Eq_6227_JY}) in Definition~\ref{D_5428_RTY}.)
\item\label{R_5618_Summ_Containing}
$C_l (\cY)$ is the homogeneous algebra
made from the first $l$ towers,
with the count starting at~$0$.
(See Equation~(\ref{Eq_6308_ClCY})
in Definition~\ref{D_5428_RTY_Part2}.)
It contains
the $l$-th stage algebra
(Notation \ref{D_6102_A2}(\ref{D_6102_A2_stage}))
of the recursive subhomogeneous decomposition
for $C^* (\Z, X, h)_Y$
constructed in Theorem~\ref{T_5619_RSHA}.
Moreover, $C (\cY) = C_{m (\cY)} (\cY)$,
the homogeneous algebra which contains
the entire recursive subhomogeneous decomposition
for $C^* (\Z, X, h)_Y$.
(See Equation~(\ref{Eq_6227_StarStar})
in Definition~\ref{D_5428_RTY_Part2}.)
\item\label{R_5618_Summ_gm}
$\gm_{\cY, l}$ is the map from $C^* (\Z, X, h)_Y$
to the homogeneous algebra $C_l (\cY)$.
(See Definition~\ref{D_5428_RTY_Part2}.)
We take $\gm_{\cY} = \gm_{\cY, m (\cY)}$,
the map to the entire homogeneous algebra.
\item\label{R_5618_Summ_cSl}
$\cS_l (\cY)$ is the set of
sequences of towers with index less than~$l$
which points in
$D_l$ could possibly traverse
before getting to the top level of the $l$-th tower.
The criterion is the trivial arithmetic criterion.
(See Definition~\ref{D_5501_S}.)
\item\label{R_5618_Summ_Tlmu}
$T_{l, \mu} (\cY)$ is the set of points in $D_l$
which actually follow the path~$\mu \in \cS_l (\cY)$
through the towers.
(Many of these sets will be empty.
See Definition~\ref{D_5501_S}.)
\item\label{R_5618_Summ_smlmu}
$\bt_{\cY, l, \mu}$ is the contribution made by the points
in $T_{l, \mu} (\cY)$ to the gluing map used to attach
the $l$-th summand in the recursive subhomogeneous decomposition
for $C^* (\Z, X, h)_Y$
constructed in Theorem~\ref{T_5619_RSHA}.
(See Definition~\ref{D_5501_S}.)
\item\label{R_5618_Summ_rshal}
$R_l (\cY)$ is the $l$-th stage
(Notation~\ref{D_6102_A2}(\ref{D_6102_A2_stage}))
in the recursive subhomogeneous decomposition
for $C^* (\Z, X, h)_Y$
constructed in Theorem~\ref{T_5619_RSHA}.
(See Definition~\ref{D_5501_S}.)
\item\label{R_5618_Summ_Xl}
$X_{\cY, l} \subset X$
is the union of the towers labelled $0, 1, \ldots, l$.
(See Convention~\ref{N_5618_SuppressY}.)
\end{enumerate}
\end{rmk}

\begin{dfn}\label{D_5429_RTime}
Let $X$ be a topological space,
and let $h \colon X \to X$ be a homeomorphism.
Let $Y \subset X$ be a subset.
We define the {\emph{first return time}}
$r_Y \colon Y \to \N \cup \{ \I \}$
by
\[
r_Y (y) = \inf \big( \big\{ n \in \N \colon  h^n (y) \in Y \big\} \big)
\]
for $y \in Y$.
\end{dfn}

We have the following standard results.

\begin{lem}\label{L_5429_RTime_lsc}
Adopt the notation of Definition~\ref{D_5429_RTime}.
If $Y$ is closed then $r_Y$ is lower semi\ct.
\end{lem}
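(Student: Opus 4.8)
The plan is to unwind the definition of lower semicontinuity and reduce the statement to an elementary fact about finite unions of closed sets. Recall that a function $f \colon Y \to \N \cup \{ \I \}$ is lower semi\ct{} precisely when $\{ y \in Y \colon f (y) \leq k \}$ is closed in $Y$ for every $k \in \N$ (equivalently, when every superlevel set $\{ y \in Y \colon f (y) > k \}$ is open in $Y$); the values $f (y) = \I$ cause no trouble, and since the first return time is always a positive integer, there is nothing to check for $k = 0$ or $k = \I$. So it suffices to fix $k \in \N$ and prove that $\{ y \in Y \colon r_Y (y) \leq k \}$ is closed in~$Y$.

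First I would observe, directly from the definition of $r_Y$ in \Def{D_5429_RTime}, that a point $y \in Y$ satisfies $r_Y (y) \leq k$ if and only if $h^n (y) \in Y$ for some $n \in \{ 1, 2, \ldots, k \}$. This gives the identity
\[
\{ y \in Y \colon r_Y (y) \leq k \}
 = \bigcup_{n = 1}^{k} \big( Y \cap h^{-n} (Y) \big).
\]
The key step is then that each of the sets $Y \cap h^{-n} (Y)$ is closed in~$Y$: since $h$ is a homeomorphism, the iterate $h^n$ is continuous, so $h^{-n} (Y)$ is closed in $X$ because $Y$ is closed, and intersecting with $Y$ preserves closedness. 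A finite union of closed sets is closed, so $\{ y \in Y \colon r_Y (y) \leq k \}$ is closed in~$Y$, as desired.

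Since this holds for every $k \in \N$, the function $r_Y$ is lower semi\ct. I do not expect any genuine obstacle here; the only points that warrant a word of care are the translation between the ``$\{ f \leq k \}$ closed'' formulation and the open-superlevel-set definition of lower semicontinuity, and the remark that no assumption on $X$ beyond the closedness of $Y$ (and continuity of the iterates of $h$) is used — in particular $X$ need not be metrizable, Hausdorff, or compact for this lemma.
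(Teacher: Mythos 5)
Your proof is correct and uses essentially the same argument as the paper: both verify that the sublevel set $\{ y \colon r_Y(y) \leq k \}$ is closed by writing it as a finite union of the closed sets $h^{-n}(Y)$ for $n = 1, \ldots, k$. The only difference is cosmetic — the paper phrases the set in $X$ rather than explicitly intersecting with $Y$, and uses a real parameter $\alpha$ instead of an integer $k$.
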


\begin{proof}
We must show that for $\af \in \R$,
the set $\big\{ x \in X \colon r_Y (x) > \af \big\}$
is open.
Its complement is
\[
\big\{ x \in X \colon r_Y (x) \leq \af \big\}
 = \bigcup_{n \in [1, \af] \cap \Z} h^{-n} (Y),
\]
which is clearly closed.
\end{proof}

\begin{lem}\label{L_5429_RTime_bdd}
Adopt the notation of Definition~\ref{D_5429_RTime}.
If $h$ is minimal, $X$ is compact Hausdorff,
and $\sint (Y) \neq \E$,
then $\bigcup_{n = 1}^{\infty} h^{-n} (Y) = X$
and $r_Y$ is bounded.
\end{lem}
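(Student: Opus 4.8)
The plan is to derive both assertions from the single fact that $\bigcup_{n = 1}^{\I} h^{-n}(U) = X$, where $U = \sint(Y)$, which is nonempty and open by hypothesis. Granting this, the equality $\bigcup_{n = 1}^{\I} h^{-n}(Y) = X$ is immediate since $U \S Y$, and moreover $r_Y$ takes finite values everywhere on $Y$ (for $y \in Y \S X$ there is $n \in \N$ with $h^n(y) \in U \S Y$).

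To prove $\bigcup_{n = 1}^{\I} h^{-n}(U) = X$, I would set $V = \bigcup_{n = 1}^{\I} h^{-n}(U)$, which is open, and observe that $h^{-1}(V) = \bigcup_{n = 2}^{\I} h^{-n}(U) \S V$. Taking complements, the closed set $C = X \SM V$ satisfies $h^{-1}(C) \supseteq C$, hence $h(C) \S C$, so the sets $C \supseteq h(C) \supseteq h^2(C) \supseteq \cdots$ form a decreasing sequence; consequently $K = \bigcap_{n = 0}^{\I} h^n(C) = \bigcap_{n = 1}^{\I} h^n(C)$ is closed with $h(K) = K$. Were $C$ nonempty, then $K$ would be nonempty (a decreasing intersection of nonempty compact subsets of a compact Hausdorff space), closed, and $h$-invariant, hence $K = X$ by minimality of $h$; since $K \S C$, this forces $C = X$, that is $V = \E$, contradicting $\E \neq U \S V$. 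Therefore $C = \E$, so $V = X$.

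For the boundedness of $r_Y$, the equality just established exhibits $\{ h^{-n}(U) \colon n \in \N \}$ as an open cover of the compact space $X$. I would extract a finite subcover, say $X = \bigcup_{i = 1}^{k} h^{-n_i}(U)$ with $n_1 < n_2 < \cdots < n_k$, and set $N = n_k$. Then for any $y \in Y \S X$ there is an $i$ with $h^{n_i}(y) \in U \S Y$, whence $r_Y(y) \leq n_i \leq N$; thus $r_Y$ is bounded above by $N$.

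I do not anticipate a genuine obstacle: the statement amounts to the familiar fact that a minimal homeomorphism has uniformly bounded return times into every nonempty open set, and the only point needing care is keeping straight the directions of the inclusions $h^{-1}(V) \S V$ and $h(C) \S C$ when running the minimality argument.
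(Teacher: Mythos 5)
Your proof is correct and takes essentially the same route as the paper's: form the open set $V = \bigcup_{n \geq 1} h^{-n}(\sint Y)$, use minimality to show its complement is empty (you spell out the decreasing-intersection argument that the paper leaves implicit), and extract a finite subcover of the resulting open cover to bound $r_Y$. One small slip: the clause ``$\E \neq U \S V$'' anticipates what is being proved, since a priori it is not clear that $U \S V$; but this is harmless because all you need is $V \neq \E$, which is immediate from $\E \neq h^{-1}(U) \S V$.
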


\begin{proof}
Set $U = \bigcup_{n = 1}^{\infty} h^{-n} (\sint (Y))$,
which is a nonempty open subset of~$X$ such that $U \subset h (U)$.
Then $Z = X \setminus \bigcup_{n = 1}^{\infty} h^{-n} (Y)$
is a closed subset of~$X$ such that $h (Z) \subset Z$,
and $Z \neq X$.
Therefore $Z = \varnothing$.
So $U = X$.
Use compactness of~$X$
to find $N \in \N$ such that $X = \bigcup_{n = 1}^N h^{-n} (Y)$.
Then $r_Y (x) \leq N$ for all $x \in X$.
\end{proof}

We recall the definitions of a
recursive subhomogeneous decomposition
and a recursive subhomogeneous \ca{} from~\cite{Ph6}.
% with one wrinkle:
% we allow some of the base spaces to be empty.
% It is easily checked that one gets a
% recursive subhomogeneous decomposition
% as in~\cite{---}
% by simply omitting summands for which the
% corresponding base space is empty.

\begin{dfn}[Part of Definition 1.1 of~\cite{Ph6}]\label{D_5619_Rshd}
The class of {\emph{recursive subhomogeneous \ca{s}}}
is the smallest class ${\mathcal{R}}$
of \ca{s} such that:
\begin{enumerate}
\item\label{D_5619_Rshd_Init}
If $T$ is a \cpt{} space and $r \in \N$, then
$C (T, M_r) \in {\mathcal{R}}$.
\item\label{D_5619_Rshd_Ind}
${\mathcal{R}}$ is closed under the following pullback construction:
If $A \in {\mathcal{R}}$, if $T$ is a \cpt{} space,
if $T^{(0)} \subset T$ is closed,
if $\ph \colon A \to C \left( \rsz{ T^{(0)} }, \, M_r \right)$ is any
unital \hm, and if
$\rh \colon C (T, M_r) \to C \left( \rsz{ T^{(0)} }, \, M_r \right)$
is the restriction \hm, then the pullback
\[
A \oplus_{C ( T^{(0)}, \, M_r )} C (T, M_r) =
  \big\{ (a, f) \in A \oplus C (T, M_r) \colon \ph (a) = \rh (f) \big\}
\]
(compare with Definition~2.1 of~\cite{Pd}) is in ${\mathcal{R}}$.
\end{enumerate}

In~(2) the choice $T^{(0)} = \varnothing$ is allowed
(in which case $\ph = 0$ is allowed).
Thus the pullback could be an ordinary direct sum.
\end{dfn}

\begin{ntn}[Part of Definition 1.1 of~\cite{Ph6}]\label{D_6102_A2}
We adopt the following standard notation
for recursive subhomogeneous \ca{s}.
{}From the definition, it is clear that
any recursive subhomogeneous \ca{} can be written in
the form
\[
R = \left[ \cdots \rule{0em}{3ex} \left[ \left[
  C_0 \oplus_{C_1^{(0)}} C_1 \right]
 \oplus_{C_2^{(0)}} C_2 \right] \cdots \right]
            \oplus_{C_m^{(0)}} C_m,
\]
with $C_l = C (T_l, \, M_{r (l)})$ for \cpt{} spaces $T_l$ and positive
integers $r (l)$, with
$C_l^{(0)} = C {\ts{ \left( \rsz{ T_l^{(0)} }, \, M_{r (l)} \right) }}$
for compact
subsets $T_l^{(0)} \subset T_l$ (possibly empty), and where the maps
$C_l \to C_l^{(0)}$ are always the restriction maps.
An expression of this type will be referred to as a
{\emph{decomposition}} of $R$.
% , and the notation used here
% will be referred to as the standard notation for a decomposition.

Associated with this decomposition are:
\begin{enumerate}
\item\label{D_6102_A2_length}
Its {\emph{length}} $m$.
\item\label{D_6102_A2_stage}
Its {\emph{$l$-th stage algebra}}
\[
R^{(l)} = \left[ \cdots \rule{0em}{3ex} \left[ \left[
  C_0 \oplus_{C_1^{(0)}} C_1 \right]
 \oplus_{C_2^{(0)}} C_2 \right] \cdots \right]
            \oplus_{C_l^{(0)}} C_l,
\]
obtained by using only the first $l + 1$ algebras
$C_0, C_1, \ldots, C_l$.
\item\label{D_6102_A2_base}
Its {\emph{base spaces}} $T_0, T_1, \ldots, T_m$.
\item\label{D_6102_A2_matrix}
Its {\emph{matrix sizes}} $r (0), r (1), \ldots, r (m)$.
% \item\label{D_6102_A2_standard}
% Its {\emph{standard representation}}
% $\sm = \sm_R \colon R \to \bigoplus_{l = 0}^m C (T_l, \, M_{r (l)} )$,
% defined by forgetting the restriction to a subalgebra in each of
% the fibered products in the decomposition.
% \item\label{D_6102_A2_evaluation}
% The associated
% {\emph{evaluation maps}} $\ev_x \colon R \to M_{r (l)}$
% for $x \in T_l$,
% defined to be the restriction of the usual evaluation map to~$R$,
% identified with a subalgebra of
% $\bigoplus_{l = 0}^m C (T_l, \, M_{r (l)} )$ via $\sm$.
\end{enumerate}
\end{ntn}

\begin{dfn}\label{D_5428_RTY}
Let $X$ be a compact metric space,
and let $h \colon X \to X$ be a homeomorphism.
Let $Y \subset X$ be a nonempty compact subset
such that
\begin{equation}\label{Eq_5921_YCond}
Y \subset {\ov{\bigcup_{n = 1}^{\I} h^n (Y)}}.
\end{equation}
Then a {\emph{Rokhlin system}} $\cY$ for $(X, h, Y)$
is a tuple
\[
\cY =
\big( m ({\cY}),
   \, T_0 ({\cY}), \, T_1 ({\cY}), \, \ldots, \, T_{m ( \cY )} ({\cY}),
   \, r_0 ({\cY}), \, r_1 ({\cY}), \, \ldots, \, r_{m ( \cY )} ({\cY})
  \big),
\]
with $m ({\cY}) \in \Nz$,
in which
$T_0 ({\cY}), \, T_1 ({\cY}), \, \ldots, \, T_{m ( \cY )} ({\cY})
 \subset Y$
are nonempty compact subsets,
and in which
$r_0 ({\cY}), \, r_1 ({\cY}), \, \ldots, \, r_{m ( \cY )} ({\cY})
 \in \N$,
satisfying the conditions below.
(The numbers $r_l (\cY)$ are return times,
and there is a conflict with the notation for the
function $r_Y$ of Definition~\ref{D_5429_RTime},
but the meaning should always be clear from the context.)
For convenience in stating these conditions
and for later use,
for $l = 0, 1, \ldots, m ({\cY})$
% we define $T_l^0 ({\cY})$ to be the relative complement
we define
\begin{equation}\label{Eq_5921_NewStar}
D_l ({\cY}) = T_l ({\cY}) \cap \bigcup_{j = 0}^{l - 1} T_j ({\cY})
\andeqn
T_l^0 ({\cY}) = T_l ({\cY}) \SM D_l ({\cY}).
\end{equation}
The conditions are then as follows:
\begin{enumerate}
\item\label{D_5428_RTY_Union}
$Y = \bigcup_{l = 0}^{m ({\cY})} T_l ({\cY})$.
\item\label{D_5428_RTY_Nondecr}
$r_0 ({\cY}) \leq r_1 ({\cY}) \leq \cdots \leq r_{m ( \cY )} ({\cY})$.
\item\label{D_5428_RTY_Ret}
For $l = 0, 1, \ldots, m ({\cY})$
we have $h^{r_l ({\cY})} (T_l ({\cY})) \subset Y$.
\item\label{D_5428_RTY_ExactRet}
With $r_Y$ as in Definition~\ref{D_5429_RTime},
for $l = 0, 1, \ldots, m ({\cY})$ and $y \in T_l^0 ({\cY})$
we have $r_Y (y) = r_l ({\cY})$.
\item\label{D_5428_RTY_ExistExRt}
With $r_Y$ as in Definition~\ref{D_5429_RTime},
for $l = 0, 1, \ldots, m ({\cY})$, either $T_l ({\cY}) = \E$
and $r_l ({\cY}) \in \{ r_Y (y) \colon y \in Y \}$,
or there exists $y \in T_l ({\cY})$
such that $r_Y (y) = r_l ({\cY})$.
% \item\label{D_5428_RTY_---}
% ---
\setcounter{TmpEnumi}{\value{enumi}}
\end{enumerate}
We say that $\cY$ is {\emph{irredundant}} if also:
\begin{enumerate}
\setcounter{enumi}{\value{TmpEnumi}}
\item\label{D_5428_RTY_Irr}
For $l = 0, 1, \ldots, m ({\cY})$,
the set $T_l^0 ({\cY})$
is nonempty and is dense in $T_l ({\cY})$.
\end{enumerate}
The associated {\emph{tower labelling set}} is
\begin{equation}\label{Eq_6227_JY}
J ({\cY})
 = \big\{ (l, j) \in \Nz \times \Nz \colon
  {\mbox{$0 \leq l \leq m ({\cY})$ and $0 \leq j \leq r_l ({\cY}) - 1$}}
 \big\}.
\end{equation}
When $\cY$ is understood,
we may suppress it in the notation.
\end{dfn}

\begin{dfn}\label{D_5428_RTY_Part2}
Let $X$, $h$, $Y$, and~$\cY$
be as in Definition~\ref{D_5428_RTY}.
We define
\begin{equation}\label{Eq_6227_StarStar}
C^* (\Z, X, h)_{\cY} = C^* (\Z, X, h)_Y
\andeqn
C (\cY) = \bigoplus_{l = 0}^{m (\cY)}
    C \big( T_l (\cY), \, M_{r_l ({\cY})} \big).
\end{equation}
For $l = 0, 1, \ldots, m (\cY)$
we define
\begin{equation}\label{Eq_6308_ClCY}
C_l (\cY)
 = \bigoplus_{j = 0}^{l} C \big( T_j (\cY), \, M_{r_j ({\cY})} \big).
\end{equation}
and,
using the notation from
Proposition~\ref{L_5429_FromAY}, we define
\[
\gm_{\cY, l} \colon C^* (\Z, X, h)_{\cY}
 \to C_l (\cY)
\]
by
\[
\gm_{\cY, l} (a)
 = \big( \gm_{r_0 (\cY), \, T_0 (\cY)} (a), \,
         \gm_{r_1 (\cY), \, T_1 (\cY)} (a), \,
         \ldots, \,
         \gm_{r_{l} (\cY), \, T_{l} (\cY)} (a) \big)
\]
for $a \in C^* (\Z, X, h)_{\cY}$.
We further define
$\gm_{\cY} = \gm_{\cY, m (\cY)} \colon C^* (\Z, X, h)_{\cY} \to C (\cY)$
and for $l = 0, 1, \ldots, m (\cY)$ we let
$\pi_{\cY, l} \colon C (\cY)
 \to \bigoplus_{j = 0}^{l} C \big( T_j (\cY), \, M_{r_j ({\cY})} \big)$
be the projection to the first $l + 1$ coordinates,
so that $\gm_{\cY, l} = \pi_{\cY, l} \circ \gm_{\cY}$.
% \[
% \gm_{\cY} (a)
%  = \big( \gm_{r_0 (\cY), \, T_0 (\cY)} (a), \,
%          \gm_{r_1 (\cY), \, T_1 (\cY)} (a), \,
%          \ldots, \,
%          \gm_{r_{m (\cY)} (\cY), \, T_{m (\cY)} (\cY)} (a) \big)
% \]
\end{dfn}

The sets
$T_0 ({\cY}), \, T_1 ({\cY}), \, \ldots, \, T_{m ( \cY )} ({\cY})$
are the bases of the Rokhlin towers associated with~$\cY$.
For fixed~$l$,
the sets
$T_l ({\cY}), \, h (T_l ({\cY})),
 \, \ldots, \, h^{r_l ({\cY}) - 1} (T_l ({\cY}))$
form the Rokhlin tower with base $T_l ({\cY})$.
We need to use closed sets,
so that objects like $C (\cY)$ as defined above make sense,
but if $Y$ is not both closed and open it is then
not possible to choose the sets in the towers to be disjoint.
The set $D_l (\cY)$ is the ``boundary'' of $T_l (\cY)$
in $\bigcup_{j = 0}^{l} T_j (\cY)$,
that is,
the part of $T_l (\cY)$ which is used when gluing
it onto $\bigcup_{j = 0}^{l - 1} T_j (\cY)$.
The set $T_l^0 ({\cY})$ is the ``interior'' of $T_l ({\cY})$.

The hypotheses on~$Y$ in Definition~\ref{D_5428_RTY}
are not sufficient to ensure that Rokhlin systems exist.
They do exist in the cases relevant here.
The standard example is given in
Lemma~\ref{L_5623_StdTowers}.
The one we actually use is given in
Lemma~\ref{L_6224_FullTowers}.
The difference is in the definition of $T_l ({\cY})$;
the sets $T_l^0 ({\cY})$
are the same.
The proofs are easy, and are omitted.

\begin{lem}\label{L_5623_StdTowers}
Adopt the notation of Definition~\ref{D_5429_RTime}.
Assume that $h$ is minimal and $\sint (Y) \neq \E$.
Then the following definitions give a Rokhlin system~$\cY$
for $(X, h, Y)$
as in Definition~\ref{D_5428_RTY}:
\begin{enumerate}
\item\label{L_5623_StdTowers_m}
$m ({\cY}) = \card ( \{ r_Y (y) \colon y \in Y \} ) - 1$.
\item\label{L_5623_StdTowers_rl}
The range of $r_Y$
is
$\big\{ r_0 ({\cY}), \, r_1 ({\cY}),
 \, \ldots, \, r_{m ( \cY )} ({\cY}) \big\}$,
labelled in increasing order:
$r_0 ({\cY}) < r_1 ({\cY}) < \cdots < r_{m ( \cY )}$.
\item\label{L_5623_StdTowers_TlY}
$T_l ({\cY})$ is the closure of the set
$\{ y \in Y \colon r_Y (y) = r_l ({\cY}) \}$
for $l = 0, 1, \ldots, m ({\cY})$.
\setcounter{TmpEnumi}{\value{enumi}}
\end{enumerate}
Moreover, this Rokhlin system is irredundant,
and we have:
\begin{enumerate}
\setcounter{enumi}{\value{TmpEnumi}}
\item\label{L_5623_StdTowers_37}
$\bigcup_{n \in \Z} h^n (Y) = X$.
\item\label{L_5623_StdTowers_T0}
$T_l^0 ({\cY}) = \{ y \in Y \colon r_Y (y) = r_l ({\cY}) \}$
for $l = 0, 1, \ldots, m ({\cY})$.
\end{enumerate}
\end{lem}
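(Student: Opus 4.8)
The plan is to verify, one at a time, the conditions of Definition~\ref{D_5428_RTY} together with irredundancy, and then the two extra assertions, using only two facts: that $r_Y$ is lower semicontinuous (\Lem{L_5429_RTime_lsc}, applicable because $X$ compact metric forces the compact set~$Y$ to be closed) and that $r_Y$ is bounded (\Lem{L_5429_RTime_bdd}). First I would check that the definitions make sense. By \Lem{L_5429_RTime_bdd}, $r_Y$ is bounded, hence finite on all of~$Y$ with finite range, so $F := \{ r_Y (y) \colon y \in Y \}$ is a finite subset of~$\N$ and $m (\cY) = \card (F) - 1 \in \Nz$; writing $F = \{ r_0 < r_1 < \cdots < r_m \}$ fixes the $r_l (\cY)$, and with $E_l := \{ y \in Y \colon r_Y (y) = r_l (\cY) \}$ (nonempty, since $r_l (\cY) \in F$) the sets $T_l (\cY) = \ov{E_l}$ are nonempty compact subsets of~$Y$. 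I would also dispose of the standing hypothesis~(\ref{Eq_5921_YCond}) on~$Y$ and of assertion~(\ref{L_5623_StdTowers_37}) together, by applying \Lem{L_5429_RTime_bdd} to the (also minimal) homeomorphism~$h^{-1}$: this yields $\bigcup_{n = 1}^{\I} h^n (Y) = X$, whence $\bigcup_{n \in \Z} h^n (Y) = X \supset Y = \ov{Y}$.

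The routine conditions I would clear away quickly: condition~(\ref{D_5428_RTY_Union}) is $Y = \bigcup_{l} E_l \subset \bigcup_{l} T_l (\cY) \subset \ov{Y} = Y$; condition~(\ref{D_5428_RTY_Nondecr}) holds since the $r_l (\cY)$ are listed increasingly; condition~(\ref{D_5428_RTY_ExistExRt}) holds since $r_l (\cY) \in F$ is attained on $E_l \subset T_l (\cY) \neq \E$; and for condition~(\ref{D_5428_RTY_Ret}), $h^{r_l (\cY)} (E_l) \subset Y$ by definition of the first return time, so by continuity and closedness of~$Y$ we get $h^{r_l (\cY)} (T_l (\cY)) = h^{r_l (\cY)} \big( \ov{E_l} \big) \subset \ov{ h^{r_l (\cY)} (E_l) } \subset Y$.

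The one substantive step, and the step I expect to be the main obstacle, is the identification $T_l^0 (\cY) = E_l$; granting it, conditions~(\ref{D_5428_RTY_ExactRet}) and~(\ref{D_5428_RTY_Irr}) and assertion~(\ref{L_5623_StdTowers_T0}) are immediate (respectively: $r_Y \equiv r_l (\cY)$ on $E_l$; $E_l$ is nonempty and dense in $\ov{E_l} = T_l (\cY)$; the identification is assertion~(\ref{L_5623_StdTowers_T0}) verbatim). To prove it I would argue as follows. For $l \geq 1$ (the case $l = 0$ is trivial, since then $D_0 (\cY) = \E$ while $E_0 = \{ y \in Y \colon r_Y (y) \leq r_0 (\cY) \}$ is closed by lower semicontinuity), as $r_l (\cY)$ is the least element of~$F$ exceeding $r_{l - 1} (\cY)$ one has $\bigcup_{j < l} T_j (\cY) = \bigcup_{j < l} \ov{E_j} = \ov{ \bigcup_{j < l} E_j } = \ov{ \{ y \in Y \colon r_Y (y) \leq r_{l - 1} (\cY) \} }$, and by lower semicontinuity the set $\{ y \in Y \colon r_Y (y) \leq r_{l - 1} (\cY) \}$ is closed; hence no point with $r_Y$-value $r_l (\cY)$ lies in $\bigcup_{j < l} T_j (\cY)$, so $E_l \cap D_l (\cY) = \E$ and $E_l \subset T_l^0 (\cY)$. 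Conversely, given $y \in T_l (\cY) \SM E_l = \ov{E_l} \SM E_l$, pick $y_k \in E_l$ with $y_k \to y$; lower semicontinuity gives $r_Y (y) \leq \liminf_k r_Y (y_k) = r_l (\cY)$, and $r_Y (y) \neq r_l (\cY)$ then forces $r_Y (y) = r_j (\cY)$ for some $j < l$, so $y \in E_j \subset T_j (\cY)$ and therefore $y \in D_l (\cY)$; thus $\ov{E_l} \SM E_l \subset D_l (\cY)$, i.e.\ $T_l^0 (\cY) = \ov{E_l} \SM D_l (\cY) \subset E_l$. The two inclusions give $T_l^0 (\cY) = E_l$. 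The essential content is that taking closures in the definition of the $T_l (\cY)$ does not enlarge the ``interiors'' $T_l^0 (\cY)$, and this is precisely where lower semicontinuity of~$r_Y$ is used, in both inclusions.
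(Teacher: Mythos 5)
Your proposal is correct. The paper does not actually supply a proof here (it says ``the proofs are easy, and are omitted'' just before Lemma~\ref{L_5623_StdTowers}), so there is nothing to compare against; your argument is the natural one, with the substantive point being the identification $T_l^0(\cY) = E_l$ via lower semicontinuity of $r_Y$, including the slightly delicate observation that $\bigcup_{j<l} T_j(\cY) = \{y \in Y : r_Y(y) \leq r_{l-1}(\cY)\}$ is already closed.
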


\begin{lem}\label{L_6224_FullTowers}
Adopt the notation of Definition~\ref{D_5429_RTime}.
Assume that $h$ is minimal and $\sint (Y) \neq \E$.
Then the following definitions give a Rokhlin system~$\cY$
for $(X, h, Y)$
as in Definition~\ref{D_5428_RTY}:
\begin{enumerate}
\item\label{L_6224_FullTowers_m}
$m ({\cY}) = \card ( \{ r_Y (y) \colon y \in Y \} ) - 1$.
\item\label{L_6224_FullTowers_rl}
The range of $r_Y$
is
$\big\{ r_0 ({\cY}), \, r_1 ({\cY}),
 \, \ldots, \, r_{m ( \cY )} ({\cY}) \big\}$,
labelled in increasing order:
$r_0 ({\cY}) < r_1 ({\cY}) < \cdots < r_{m ( \cY )}$.
\item\label{L_6224_FullTowers_TlY}
$T_l ({\cY})
 = \big\{ y \in Y \colon h^{r_l ({\cY})} (y) \in Y \big\}$
for $l = 0, 1, \ldots, m ({\cY})$.
\setcounter{TmpEnumi}{\value{enumi}}
\end{enumerate}
Moreover, we have:
\begin{enumerate}
\setcounter{enumi}{\value{TmpEnumi}}
\item\label{L_6224_FullTowers_37}
$\bigcup_{n \in \Z} h^n (Y) = X$.
\item\label{L_6224_FullTowers_T0}
$T_l^0 ({\cY}) = \{ y \in Y \colon r_Y (y) = r_l ({\cY}) \}$
for $l = 0, 1, \ldots, m ({\cY})$.
\end{enumerate}
\end{lem}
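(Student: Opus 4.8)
The plan is to verify directly that the tuple $\cY$ specified by items~(1)--(3) meets every requirement of Definition~\ref{D_5428_RTY}, and to read off the two ``Moreover'' assertions along the way. First I would record the two facts that make everything run. Applying Lemma~\ref{L_5429_RTime_bdd} to~$h$ gives $\bigcup_{n = 1}^{\I} h^{-n} (Y) = X$, and applying it to the homeomorphism~$h^{-1}$ (which is again minimal, and for which $\sint (Y)$ is still nonempty) gives $\bigcup_{n = 1}^{\I} h^{n} (Y) = X$; together these give $\bigcup_{n \in \Z} h^n (Y) = X$, which is the ``Moreover'' assertion~(\ref{L_6224_FullTowers_37}) and in particular establishes the standing hypothesis~(\ref{Eq_5921_YCond}) needed for the notion of a Rokhlin system for $(X, h, Y)$ to make sense. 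The same lemma says $r_Y$ is bounded and finite on~$Y$, so $\{ r_Y (y) \colon y \in Y \}$ is a nonempty finite subset of~$\N$; listing it in increasing order as $r_0 (\cY) < r_1 (\cY) < \cdots < r_{m (\cY)} (\cY)$, with $m (\cY) = \card (\{ r_Y (y) \colon y \in Y \}) - 1 \in \Nz$, makes sense of items~(1) and~(2) and gives condition~(\ref{D_5428_RTY_Nondecr}) for free.

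Next I would dispose of the tower bases. For each~$l$ we have $T_l (\cY) = Y \cap h^{- r_l (\cY)} (Y)$, a closed subset of the compact set~$Y$, hence compact and contained in~$Y$; it is nonempty since $r_l (\cY)$ is a value of~$r_Y$, so some $y \in Y$ has $h^{r_l (\cY)} (y) = h^{r_Y (y)} (y) \in Y$. Condition~(\ref{D_5428_RTY_Ret}), namely $h^{r_l (\cY)} (T_l (\cY)) \subset Y$, is immediate from the definition, and for condition~(\ref{D_5428_RTY_Union}) one notes that any $y \in Y$ has $r_Y (y) = r_l (\cY)$ for some~$l$, whence $y \in T_l (\cY)$, while $T_l (\cY) \subset Y$ by construction.

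The one step with content is the identification
\[
T_l^0 (\cY) = \{ y \in Y \colon r_Y (y) = r_l (\cY) \},
\]
which is the ``Moreover'' assertion~(\ref{L_6224_FullTowers_T0}) and yields conditions~(\ref{D_5428_RTY_ExactRet}) and~(\ref{D_5428_RTY_ExistExRt}) at once. Recall $T_l^0 (\cY) = T_l (\cY) \SM \bigcup_{j = 0}^{l - 1} T_j (\cY)$. For ``$\supseteq$'': if $r_Y (y) = r_l (\cY)$ then $y \in T_l (\cY)$ as above, and for $j < l$ we have $r_j (\cY) < r_l (\cY) = r_Y (y)$, so $r_j (\cY)$ is not a return time of~$y$ and $y \notin T_j (\cY)$; thus $y \in T_l^0 (\cY)$. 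For ``$\subseteq$'': if $y \in T_l^0 (\cY)$ then $y \in T_l (\cY)$ gives $h^{r_l (\cY)} (y) \in Y$, so $r_Y (y) \le r_l (\cY)$; writing $r_Y (y) = r_k (\cY)$, the case $k < l$ would put $y \in T_k (\cY)$, contradicting $y \notin \bigcup_{j < l} T_j (\cY)$, so $k \ge l$, while $r_k (\cY) \le r_l (\cY)$ forces $k \le l$ by strict monotonicity; hence $k = l$. Then condition~(\ref{D_5428_RTY_ExactRet}) is tautological, and for condition~(\ref{D_5428_RTY_ExistExRt}) one uses that $T_l (\cY) \ne \E$ and that $T_l^0 (\cY) \subset T_l (\cY)$ is nonempty, every point of which is a $y \in T_l (\cY)$ with $r_Y (y) = r_l (\cY)$.

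I do not expect a genuine obstacle: once the two facts $\bigcup_{n \in \Z} h^n (Y) = X$ and boundedness of~$r_Y$ are in place, the rest is an unwinding of definitions. The only point wanting a moment's care is the ``$\subseteq$'' half above: a point of the ``full'' base $T_l (\cY)$ knows only that $r_l (\cY)$ is \emph{a} return time, so if its \emph{first} return time were strictly smaller it would already sit in an earlier tower and be deleted on passing to $T_l^0 (\cY)$, and the strict increase of the $r_l (\cY)$ then pins the index down to~$l$. Unlike Lemma~\ref{L_5623_StdTowers}, we make no irredundancy claim (condition~(\ref{D_5428_RTY_Irr})): indeed $T_l^0 (\cY)$ need not be dense in the larger set $T_l (\cY)$, so nothing has to be checked there.
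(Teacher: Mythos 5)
The paper omits this proof ("the proofs are easy, and are omitted"), so there is no written argument to compare against, but your verification is the expected one: establish boundedness of $r_Y$ and $\bigcup_{n\in\Z} h^n(Y)=X$ via Lemma~\ref{L_5429_RTime_bdd} (for both $h$ and $h^{-1}$), then check the five conditions of Definition~\ref{D_5428_RTY} directly, with the only nontrivial point being the identification $T_l^0(\cY)=\{y\in Y : r_Y(y)=r_l(\cY)\}$. Your handling of the containment "$\subseteq$" — using that $h^{r_l(\cY)}(y)\in Y$ gives $r_Y(y)\le r_l(\cY)$, then pinning down the index via strict monotonicity and the definition of $T_l^0$ — is exactly right, and you correctly note that no irredundancy claim has to be checked here, unlike in Lemma~\ref{L_5623_StdTowers}. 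The argument is correct and complete.
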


\begin{lem}\label{L_5430_Basics}
Let $X$, $h$, $Y$, and~$\cY$
be as in Definition~\ref{D_5428_RTY},
and follow the notation there.
(See Remark~\ref{R_5618_Summ}.)
Then:
\begin{enumerate}
\item\label{L_5430_Basics_CvY}
$Y = \coprod_{l = 0}^{m ({\cY})} T_l^0 ({\cY})$.
\item\label{L_5430_Basics_R}
For $l = 0, 1, \ldots, m (\cY)$
and $y \in T_l (\cY)$,
we have $r_Y (y) \leq r_l (\cY)$.
\item\label{L_5430_Basics_N}
With $N = r_{m ( \cY )} (\cY)$,
the set $\bigcup_{n \in \Z} h^n (Y)$
is compact and equal to $\bigcup_{n = 0}^{N - 1} h^n (Y)$.
\item\label{L_5430_Basics_CovX}
$\bigcup_{n \in \Z} h^n (Y)
  = \coprod_{l = 0}^{m ({\cY})} \coprod_{j = 0}^{r_l ({\cY}) - 1}
   h^j (T_l^0 ({\cY}))
  = \coprod_{(l, j) \in J (\cY)} h^j (T_l^0 ({\cY}))$.
\item\label{L_5725_NReturn_One_2}
$Y = \coprod_{l = 0}^{m ({\cY})} h^{r_l ({\cY})} (T_l^0 ({\cY}))$.
\item\label{L_5725_NReturn_Two}
For $n \in \Nz$ we have
\[
\bigcup_{j = 0}^{n - 1} h^j (Y)
 = \coprod_{l = 0}^{m ({\cY})}
    \coprod_{j = 0}^{\min (n, \, r_l ({\cY})) - 1} h^j (T_l^0 ({\cY})).
\]
\item\label{L_5725_NReturn_Two_2}
For $n \in \N$ we have
\[
\bigcup_{j = 1}^{n} h^{-j} (Y)
 = \coprod_{l = 0}^{m ({\cY})}
    \coprod_{j = 1}^{\min (n, \, r_l ({\cY}))}
   h^{r_l ({\cY}) - j} (T_l^0 ({\cY})).
\]
\end{enumerate}
\end{lem}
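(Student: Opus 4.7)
The plan is to prove the seven parts in the order (1), (2), (3), (5), (4), (6), (7), since the later parts reduce to the earlier ones. Parts~(1) and~(2) are immediate: for~(1), condition~(1) of Definition~\ref{D_5428_RTY} writes $Y = \bigcup_l T_l(\cY)$, and by construction the sets $T_l^0 (\cY) = T_l (\cY) \SM \bigcup_{j < l} T_j (\cY)$ are pairwise disjoint and cover~$Y$ (each $y \in Y$ lies in $T_l^0 (\cY)$ for the smallest~$l$ with $y \in T_l (\cY)$). For~(2), condition~(3) of Definition~\ref{D_5428_RTY} gives $h^{r_l (\cY)} (y) \in Y$ for any $y \in T_l (\cY)$, so $r_Y (y) \leq r_l (\cY)$ by the definition of~$r_Y$.

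For part~(3) the heart of the matter is showing $\bigcup_{n \in \Z} h^n (Y) \subset \bigcup_{n = 0}^{N - 1} h^n (Y)$. For the forward direction I would use a ``return time chain'': given $y \in Y$, set $s_0 = 0$ and inductively let $s_{i + 1} = s_i + r_{l_i} (\cY)$, where $h^{s_i} (y) \in T_{l_i}^0 (\cY)$ (which exists by part~(1)). Condition~(4) of Definition~\ref{D_5428_RTY} ensures $h^{s_{i + 1}} (y) \in Y$, while $s_{i + 1} - s_i \leq N$. For any $n \geq 0$, pick~$i$ with $s_i \leq n < s_{i + 1}$; then $h^n (y) \in h^{n - s_i} (Y) \subset \bigcup_{k = 0}^{N - 1} h^k (Y)$. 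This places the forward orbit of~$Y$ in the closed set $K := \bigcup_{k = 0}^{N - 1} h^k (Y)$. For the backward direction, apply $h^{-1}$ to~(\ref{Eq_5921_YCond}) to obtain $h^{-1} (Y) \subset \ov{\bigcup_{n \geq 0} h^n (Y)} \subset K$, and induct on~$m$ (using $h^{- m - 1} (Y) = h^{-1} (h^{- m} (Y)) \subset h^{-1} (K) \subset K$, since $h^{-1} (K) = K \cup h^{-1} (Y)$) to cover all $m \geq 1$.

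For part~(5), given $y \in Y$, part~(3) applied to $h^{-1} (y)$ yields $k \in \{ 0, 1, \ldots, N - 1 \}$ and $z_0 \in Y$ with $y = h^{k + 1} (z_0)$. Take the smallest such $m := k + 1 \in \{ 1, \ldots, N \}$ and let $z = h^{-m} (y) \in Y$; minimality forces $r_Y (z) = m$. By part~(1), $z \in T_{l_0}^0 (\cY)$ for a unique~$l_0$, and condition~(4) of Definition~\ref{D_5428_RTY} gives $m = r_{l_0} (\cY)$, so $y = h^{r_{l_0} (\cY)} (z)$ as required. Disjointness of the union follows because $h^{r_l (\cY)} (z) = h^{r_{l'} (\cY)} (z')$ with $r_l (\cY) \leq r_{l'} (\cY)$ forces $z = h^{r_{l'} (\cY) - r_l (\cY)} (z') \in Y$: if $r_l (\cY) < r_{l'} (\cY)$ this contradicts $r_Y (z') = r_{l'} (\cY)$, and otherwise $z = z'$ yields $l = l'$ by part~(1).

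Parts~(4), (6), and~(7) are then bookkeeping. For part~(4), start from $\bigcup_n h^n (Y) = \bigcup_{n = 0}^{N - 1} \bigcup_l h^n (T_l^0 (\cY))$ using parts~(1) and~(3); whenever $n \geq r_l (\cY)$, peel off one return via $h^{r_l (\cY)} (T_l^0 (\cY)) \subset Y = \coprod_{l'} T_{l'}^0 (\cY)$, strictly decreasing the exponent, until $j'' < r_{l''} (\cY)$. Disjointness reduces again to condition~(4) of Definition~\ref{D_5428_RTY}: if $h^j (z) = h^{j'} (z')$ with $z \in T_l^0 (\cY)$, $z' \in T_{l'}^0 (\cY)$, $(l, j) \neq (l', j')$, and $j < j'$ in the allowed ranges, then $z = h^{j' - j} (z') \in Y$ with $0 < j' - j < r_{l'} (\cY)$, contradicting $r_Y (z') = r_{l'} (\cY)$. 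Part~(6) is this same peeling truncated at~$n$, using that peeling never increases the exponent. Part~(7) applies $h^{-j}$ to part~(5) to obtain $h^{-j} (Y) = \coprod_l h^{r_l (\cY) - j} (T_l^0 (\cY))$, then uses part~(4) to rewrite summands with negative exponents in canonical form; a direct check shows these are already present as smaller values of~$j$ on other towers, so the asserted range $1 \leq j \leq \min (n, r_l (\cY))$ suffices. The main technical obstacle is simply keeping the disjointness statements and exponent ranges straight in~(4) and~(7); no idea beyond~(3) and~(5) is required.
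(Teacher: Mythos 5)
Your proof is correct and relies on essentially the same ingredients as the paper's: the return-time chain along the forward orbit, disjointness via condition~(\ref{D_5428_RTY_ExactRet}) of Definition~\ref{D_5428_RTY}, and the hypothesis~(\ref{Eq_5921_YCond}) to control negative powers of~$h$. The only difference is organizational --- you prove part~(3) outright (handling $h^{-m}(Y)$ by a direct induction from~(\ref{Eq_5921_YCond})) and then extract part~(5) from it via a minimal-return argument, whereas the paper first establishes $Y \subset h(Z)$ and part~(5) and only then deduces the statements about negative powers --- and this reordering is harmless.
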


\begin{proof}
Part~(\ref{L_5430_Basics_CvY}) is immediate from
$\bigcup_{l = 0}^{m ({\cY})} T_l ({\cY}) = Y$
and (\ref{Eq_5921_NewStar})
in Definition~\ref{D_5428_RTY}.
Part~(\ref{L_5430_Basics_R})
is immediate from
Definition~\ref{D_5428_RTY}(\ref{D_5428_RTY_Ret}).

Define
\[
Z = \bigcup_{(l, j) \in J (\cY)} h^j (T_l^0 ({\cY})).
\]
We claim that
\begin{equation}\label{Eq_5921_InY}
\bigcup_{n = 0}^{\I} h^n (Y) = Z.
\end{equation}
That $Z \subset \bigcup_{n = 0}^{\I} h^n (Y)$ is clear.
For the reverse inclusion,
let $n \in \Nz$ and let $x \in h^n (Y)$.
Set $y = h^{-n} (x) \in Y$.
Let $n_0 \in \{ 0, 1, \ldots, n \}$
be the largest integer in this set such that $h^{n_0} (y) \in Y$.
By part~(\ref{L_5430_Basics_CvY}),
there is $l \in \{ 0, 1, \ldots, m (\cY) \}$
such that $h^{n_0} (y) \in T_l^0 (\cY)$.
By the definition of $r_Y$ we have
\[
n - n_0 < r_Y ( h^{n_0} (y) ) = r_l (\cY).
\]
Therefore
\[
x = h^{n - n_0} ( h^{n_0} (y) )
  \in h^{n - n_0} ( T_l^0 (\cY) )
  \subset Z.
\]
This completes the proof of~(\ref{Eq_5921_InY}).

With $N = r_{m ( \cY )} (\cY)$,
we now have
\begin{equation}\label{Eq_5921_Chain}
Z
= \bigcup_{(l, j) \in J (\cY)} h^j (T_l^0 ({\cY}))
\subset \bigcup_{(l, j) \in J (\cY)} h^j (T_l({\cY}))
\subset \bigcup_{n = 0}^{N - 1} h^n (Y)
\subset \bigcup_{n = 0}^{\I} h^n (Y)
= Z.
\end{equation}
So
$\bigcup_{n = 0}^{\I} h^n (Y) = \bigcup_{n = 0}^{N - 1} h^n (Y)$.
Applying $h$,
we get
$\bigcup_{n = 1}^{\I} h^n (Y) = \bigcup_{n = 1}^{N} h^n (Y)$.
Therefore $\bigcup_{n = 1}^{\I} h^n (Y)$ is closed,
and it follows
from~(\ref{Eq_5921_YCond}) in Definition~\ref{D_5428_RTY}
that $Y \subset \bigcup_{n = 1}^{\I} h^n (Y)$.
{}From~(\ref{Eq_5921_InY}), we now get
\begin{equation}\label{Eq_5921_YhZ}
Y \subset h (Z).
\end{equation}

We next claim that the sets
$h^j (T_l^0 ({\cY}))$,
for $(l, j) \in J (\cY)$,
are disjoint.
So let $(l_1, j_1), \, (l_2, j_2) \in J (\cY)$
and assume that
\[
h^{j_1} \big( T_{l_1}^0 ({\cY}) \big)
     \cap h^{j_2} \big( T_{l_2}^0 ({\cY}) \big)
 \neq \E.
\]
\Wolog{} $j_1 \leq j_2$.
Then
\[
T_{l_1}^0 ({\cY}) \cap h^{j_2 - j_1} \big( T_{l_2}^0 ({\cY}) \big)
 \neq \E
\andeqn
0 \leq j_2 - j_1 \leq r_{l_2} (\cY) - 1.
\]
If $j_2 - j_1 \neq 0$
then $h^{j_2 - j_1} (T_{l_2}^0 ({\cY})) \cap Y = \E$
by Definition~\ref{D_5428_RTY}(\ref{D_5428_RTY_ExactRet})
while $T_{l_1}^0 ({\cY}) \subset Y$.
This contradiction shows that $j_1 = j_2$.
Now $T_{l_1}^0 ({\cY}) \cap T_{l_2}^0 ({\cY}) \neq \E$
so $l_1 = l_2$
by part~(\ref{L_5430_Basics_CvY}).
The claim is proved.

At this point,
we know that the unions in the second and third expressions
in part~(\ref{L_5430_Basics_CovX})
are disjoint and are equal.

We can now write
\[
h (Z)
 = \coprod_{l = 0}^{m ({\cY})} \coprod_{j = 1}^{r_l ({\cY})}
   h^j (T_l^0 ({\cY})),
\]
and moreover
\[
Y \cap \coprod_{l = 0}^{m ({\cY})} \coprod_{j = 1}^{r_l ({\cY}) - 1}
   h^j (T_l^0 ({\cY}))
 = \E.
\]
Since $Y \subset h (Z)$,
we get
\[
Y \subset \coprod_{l = 0}^{m ({\cY})} h^{r_l ({\cY})} (T_l^0 ({\cY})).
\]
Since also
$h^{r_l ({\cY})} (T_l^0 ({\cY})) \subset Y$
for $l = 0, 1, \ldots, m (\cY)$
(by Definition~\ref{D_5428_RTY}(\ref{D_5428_RTY_Ret})),
we deduce that
\[
Y = \coprod_{l = 0}^{m ({\cY})} h^{r_l ({\cY})} (T_l^0 ({\cY})).
\]
This is part~(\ref{L_5725_NReturn_One_2}).

We next claim that $\bigcup_{n = 1}^{\I} h^{-n} (Y) \subset Z$.
So suppose $n \in \N$ and $x \in h^{-n} (Y)$.
Thus $h^n (x) \in Y$.
If $x \in Y$,
we are done since $Y \subset Z$.
Otherwise,
let $n_0$ be the least integer in $\{ 1, 2, \ldots, n \}$
such that $h^{n_0} (x) \in Y$.
By part~(\ref{L_5725_NReturn_One_2}),
there is $l \in \{ 0, 1, \ldots, m (\cY) \}$
such that $h^{n_0} (x) \in h^{r_l (\cY)} (T_l^0 (\cY) )$.
Then $h^{n_0 - r_l (\cY)} (x) \in T_l^0 (\cY) \subset Y$,
so $n_0 - r_l (\cY) < 0$.
Since $n_0 \neq 0$,
we get
$0 \leq r_l (\cY) - n_0 \leq r_l (\cY) - 1$.
Then
\[
x \in h^{r_l (\cY) - n_0} (T_l^0 (\cY)) \subset Z.
\]
The claim is proved.

Combining this claim with~(\ref{Eq_5921_InY})
finishes the proof of part~(\ref{L_5430_Basics_N}),
and combining it with~(\ref{Eq_5921_Chain})
finishes the proof of part~(\ref{L_5430_Basics_CovX}).

It remains to prove parts (\ref{L_5725_NReturn_Two})
and~(\ref{L_5725_NReturn_Two_2}).
We prove both parts
by induction on~$n$.
For the first part,
the case $n = 0$ is part~(\ref{L_5430_Basics_CvY}).
Assuming the statement is true for~$n$,
we clearly have
\[
\bigcup_{j = 0}^{n - 1} h^j (Y)
 \subset \coprod_{l = 0}^{m ({\cY})}
   \coprod_{j = 0}^{\min (n + 1, \, r_l ({\cY})) - 1}
         h^j (T_l^0 ({\cY}))
 \subset \bigcup_{j = 0}^{n} h^j (Y).
\]
Moreover,
\begin{equation}\label{Eq_5411_Diff}
\bigcup_{j = 0}^{n} h^j (Y)
 \setminus \coprod_{l = 0}^{m ({\cY})}
   \coprod_{j = 0}^{\min (n + 1, \, r_l ({\cY})) - 1}
          h^j (T_l^0 ({\cY}))
\end{equation}
is contained in the union of
those sets $h^n (T_l^0 ({\cY}))$ for which $r_l (\cY) = n$.
Since $h^{r_l (\cY)} (T_l^0 ({\cY})) \subset Y$,
it follows that the set~(\ref{Eq_5411_Diff})
is empty.
This is~(\ref{L_5725_NReturn_Two}).

The proof of~(\ref{L_5725_NReturn_Two_2})
is essentially the same,
except that we start from
part~(\ref{L_5725_NReturn_One_2})
instead of from part~(\ref{L_5430_Basics_CvY}).
\end{proof}

\begin{lem}\label{L_5725_NReturn}
Let the notation be as in Definition~\ref{D_5428_RTY}
and Definition~\ref{D_5428_RTY_Part2}
(see Remark~\ref{R_5618_Summ}),
and in addition assume that $\bigcup_{n \in \Z} h^n (Y) = X$.
Then:
\begin{enumerate}
\item\label{L_5725_NReturn_One}
$X \setminus Y
 = \coprod_{l = 0}^{m ({\cY})} \coprod_{j = 1}^{r_l ({\cY}) - 1}
   h^j (T_l^0 ({\cY}))$.
\item\label{L_5430_BasicsrhInj}
The map $\gm_{\cY}$
of Definition~\ref{D_5428_RTY_Part2} is injective.
\end{enumerate}
\end{lem}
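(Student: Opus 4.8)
The plan is as follows. Part~(\ref{L_5725_NReturn_One}) is essentially a bookkeeping consequence of Lemma~\ref{L_5430_Basics}(\ref{L_5430_Basics_CovX}). That result expresses $\bigcup_{n \in \Z} h^n (Y)$ as the \emph{disjoint} union $\coprod_{(l, j) \in J (\cY)} h^j (T_l^0 (\cY))$, where in $J (\cY)$ the index $j$ runs over $0, 1, \ldots, r_l (\cY) - 1$; under the standing hypothesis this union is all of~$X$. By Lemma~\ref{L_5430_Basics}(\ref{L_5430_Basics_CvY}), $Y = \coprod_{l = 0}^{m (\cY)} T_l^0 (\cY)$, which is precisely the part of that disjoint union indexed by the pairs $(l, 0)$. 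Removing it from a disjoint union leaves exactly the subunion over pairs with $j \geq 1$, namely $\coprod_{l = 0}^{m (\cY)} \coprod_{j = 1}^{r_l (\cY) - 1} h^j (T_l^0 (\cY))$, which is the asserted description of $X \setminus Y$. So nothing beyond citing Lemma~\ref{L_5430_Basics} is needed here.

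For Part~(\ref{L_5430_BasicsrhInj}), I would take $a \in C^* (\Z, X, h)_Y$ with $\gm_{\cY} (a) = 0$; by Definition~\ref{D_5428_RTY_Part2} this means $\gm_{r_l (\cY), \, T_l (\cY)} (a) = 0$ for every $l = 0, 1, \ldots, m (\cY)$. Write the formal series of $a$ as $\sum_n a_n u^n$ with $a_n = E (a u^{-n}) \in C (X)$. The plan is to show that $a_n = 0$ for every $n \in \Z$; then $a = 0$ follows from the standard fact that the Ces\`aro (Fej\'er) means of the Fourier series of~$a$ converge to~$a$ in norm and in our situation all vanish. To prove $a_n = 0$, fix $y \in X$. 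Using $\bigcup_{n \in \Z} h^n (Y) = X$ together with Lemma~\ref{L_5430_Basics}(\ref{L_5430_Basics_CovX}), choose $(l, j) \in J (\cY)$ and $x \in T_l^0 (\cY)$ with $y = h^j (x)$; note $x \in T_l (\cY) \subset Y$ and $0 \leq j \leq r_l (\cY) - 1$.

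Now I would split into two cases according to whether $j - n$ lies in $\{ 0, 1, \ldots, r_l (\cY) - 1 \}$. If it does, the entrywise formula~(\ref{Eq_5615_ByEntry}) for $\gm_{r_l (\cY), \, T_l (\cY)}$ gives
\[
a_n (y) = a_{j - (j - n)} (h^j (x)) = \gm_{r_l (\cY), \, T_l (\cY)} (a) (x)_{j, \, j - n} = 0,
\]
since $\gm_{r_l (\cY), \, T_l (\cY)} (a) = 0$. If instead $j - n \notin \{ 0, 1, \ldots, r_l (\cY) - 1 \}$, then $a_n (y) = 0$ by the orbit-breaking characterization in Proposition~\ref{P_5429_CharOB}: when $j - n < 0$ we have $n > j \geq 0$, so $y = h^j (x) \in h^j (Y) \subset Y_n$ while $a_n = E (a u^{-n}) \in C_0 (X \setminus Y_n)$; and when $j - n > r_l (\cY) - 1$ one gets $n < 0$, and writing $n = - p$ with $p \geq 1$ the inequality forces $1 \leq r_l (\cY) - j \leq p$, so using $h^{r_l (\cY)} (T_l (\cY)) \subset Y$ from Definition~\ref{D_5428_RTY}(\ref{D_5428_RTY_Ret}) we get $y = h^{- (r_l (\cY) - j)} \big( h^{r_l (\cY)} (x) \big) \in \bigcup_{i = 1}^{p} h^{-i} (Y) = Y_n$, and again $a_n \in C_0 (X \setminus Y_n)$. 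In either case $a_n (y) = 0$; as $y \in X$ and $n \in \Z$ were arbitrary, $a = 0$, so $\gm_{\cY}$ is injective.

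The only point that will require care is this case analysis: one must check that whenever the matrix index $j - n$ falls outside the permitted range $\{ 0, \ldots, r_l (\cY) - 1 \}$, the point $y$ necessarily lies in the set $Y_n$ on which the Fourier coefficient $a_n$ is forced to vanish by the orbit-breaking condition. That is exactly where the return-time conditions built into the definition of a Rokhlin system get used, but it is routine manipulation of inequalities rather than anything substantive; and the final passage from vanishing of all Fourier coefficients to $a = 0$ is a standard fact about crossed products by~$\Z$.
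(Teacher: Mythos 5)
Your proof is correct and follows essentially the same strategy as the paper's: both rest on the matrix-entry formula~(\ref{Eq_5615_ByEntry}), the decomposition from Lemma~\ref{L_5430_Basics}(\ref{L_5430_Basics_CvY}) and~(\ref{L_5430_Basics_CovX}), and the orbit-breaking characterization of Proposition~\ref{P_5429_CharOB}. The only organizational difference is that you argue directly (showing every Fourier coefficient of~$a$ vanishes at every point of~$X$ via a three-case analysis on $j - n$), whereas the paper argues contrapositively (locating a point where some Fourier coefficient is nonzero, exhibiting the corresponding nonzero matrix entry, and disposing of the case $n < 0$ by passing to~$a^*$); these are the same argument in two different arrangements.
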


\begin{proof}
Part~(\ref{L_5725_NReturn_One}) is immediate from
Lemma \ref{L_5623_StdTowers}(\ref{L_5430_Basics_CovX})
and Lemma \ref{L_5623_StdTowers}(\ref{L_5430_Basics_CvY}).

We prove part~(\ref{L_5430_BasicsrhInj}).
Let $a \in C^* (\Z, X, h)_{\cY}$ be nonzero.
Let $\sum_{m = - \I}^{\I} a_m u^m$ be the corresponding
formal series.
Then there is $n \in \N$ such that $a_n \neq 0$.

First assume $n \geq 0$.
By Proposition~\ref{P_5429_CharOB},
there is $x \in X \SM \bigcup_{j = 0}^{n - 1} h^j (Y)$
such that $a_n (x) \neq 0$.
Combining
Lemma \ref{L_5623_StdTowers}(\ref{L_5430_Basics_CovX})
and Lemma \ref{L_5623_StdTowers}(\ref{L_5725_NReturn_Two}),
we find $l, j \in \Nz$ such that
\[
0 \leq l \leq m (\cY),
\,\,\,\,\,\,
\min (n, r_l ({\cY})) \leq j \leq r_l (\cY) - 1,
\andeqn
x \in h^j (T_l (\cY)).
\]
We must clearly have $n \leq j \leq r_l (\cY) - 1$.
Applying~(\ref{Eq_5615_ByEntry}) in Proposition~\ref{L_5429_FromAY},
we get
\[
\gm_{r_l (\cY), \, T_l (\cY)} (a) ( h^{-j} (x))_{j, j - n}
 = a_n (x)
 \neq 0.
\]
Referring to Definition~\ref{D_5428_RTY},
we get $\gm_{\cY} (a) \neq 0$.

Now suppose $n < 0$.
Set $b = a^*$,
and let $\sum_{m = - \I}^{\I} b_m u^m$ be the corresponding
formal series.
Then $b_{-n} \neq 0$,
so the case already done implies that $\gm_{\cY} (b) \neq 0$.
Thus $\gm_{\cY} (a) = \gm_{\cY} (b)^* \neq 0$.
\end{proof}

\begin{proof}
The proof of~(\ref{Eq_5725_NrUniq_Ret}) for $y \in T_l^0 ({\cY})$
is immediate
from Definition \ref{D_5428_RTY}(\ref{D_5428_RTY_ExactRet}).
Uniqueness of $r_l ({\cY})$ then follows
from the assumption that $T_l^0 ({\cY}) \neq \E$.
\end{proof}

\begin{dfn}\label{D_5501_S}
Let the notation be as in Definition~\ref{D_5428_RTY}
and Definition~\ref{D_5428_RTY_Part2}
(see Remark~\ref{R_5618_Summ}),
but abbreviate $m (\cY) = m$, $r_l (\cY) = r_l$,
and $T_l (\cY) = T_l$.
For $l = 0, 1, \ldots, m$,
we define the set $\cS_l (\cY)$
of {\emph{$l$-admissible sequences}}
to be the set of all finite sequences
$\mu = (\mu (1), \, \mu (2), \, \ldots, \mu (t))$
in $\{ 0, 1, \ldots, l - 1 \}$
such that $\sum_{s = 1}^t r_{\mu (s)} = r_l$.
For such a sequence~$\mu$,
we define the length of~$\mu$ to be~$t$,
and
we set
\begin{align*}
T_{l, \mu} (\cY)
& = \big\{ x \in T_{l} \colon
   x \in T_{\mu (1)}, \,
   h^{r_{\mu (1)}} (x) \in T_{\mu (2)}, \,
\\
& \hspace*{3em} {\mbox{}}
   h^{r_{\mu (1)} + r_{\mu (2)}} (x) \in T_{\mu (3)}, \,
   \ldots, \,
   h^{r_{\mu (1)} + r_{\mu (2)} + \cdots + r_{\mu (t - 1)}}
      \in T_{\mu (t)}
    \big\},
\end{align*}
(a closed set---this is stated in
Lemma \ref{L_6103_Combined}(\ref{L_6103_Combined_TlmClosed})),
and further define
\[
\bt_{\cY, l, \mu} \colon
 \bigoplus_{i = 0}^{l - 1}
    C ( T_i, \, M_{r_i} )
\to C ( T_{l, \mu} (\cY), \, M_{r_l} )
\]
by
\begin{align*}
& \bt_{\cY, l, \mu} (b_0, b_1, \ldots, b_{l - 1} ) (x)
\\
& \hspace*{3em} {\mbox{}}
= \diag \big( b_{\mu (1)} (x),
       \, (b_{\mu (2)} \circ h^{r_{\mu (1)}}) (x),
\\
& \hspace*{7em} {\mbox{}}
  \, (b_{\mu (3)} \circ h^{r_{\mu (1)} + r_{\mu (2)}}) (x), \, \ldots,
   \, (b_{\mu (t)}
    \circ h^{r_{\mu (1)} + r_{\mu (2)} + \cdots + r_{\mu (t - 1)}}) (x)
   \big)
\end{align*}
for
\[
b_0 \in C ( T_0, \, M_{r_0} ), \,
b_1 \in C ( T_1, \, M_{r_1} ), \,
\ldots, \,
b_{l - 1} \in C ( T_{l - 1}, \, M_{r_{l - 1}} ),
\andeqn
x \in T_{l, \mu} (\cY).
\]

Inductively define subalgebras
$R_l (\cY) \subset \bigoplus_{i = 0}^{l} C ( T_i, \, M_{r_i} )$
by $R_0 (\cY) = C ( T_0, \, M_{r_0} )$
and
\begin{align}\label{Eq_5618_RSHD}
R_l (\cY)
& =
\Big\{ (b_0, b_1, \ldots, b_{l})
    \in R_{l - 1} (\cY) \oplus C ( T_l, \, M_{r_l} )
    % \bigoplus_{i = 0}^{l} C ( T_i, \, M_{r_i} )
   \colon
\\
& \hspace*{1.5em} {\mbox{}}
 {\mbox{$b_l (x) = \bt_{\cY, l, \mu} (b_0, b_1, \ldots, b_{l - 1}) (x)$
   for all $\mu \in \cS_l (\cY)$ and all $x \in T_{l, \mu} (\cY)$}}
    \Big\}
\notag
\end{align}
for $l = 1, \ldots, m$.
\end{dfn}

The main goal of the rest of this section
is to prove Theorem~\ref{T_5619_RSHA},
which states that
$R_{m (\cY)} (\cY)$ is equal to the range of~$\gm_{\cY}$,
and that
% it is a recursive subhomogeneous algebra---in fact,
the description in~(\ref{Eq_5618_RSHD})
is a recursive subhomogeneous decomposition.

% We will generally suppress $\cY$ in the notation.

\begin{cnv}\label{N_5618_SuppressY}
When $\cY$ is unambiguous,
we suppress it in the notation.
Thus,
in Definition~\ref{D_5428_RTY},
Definition~\ref{D_5428_RTY_Part2},
and Definition~\ref{D_5501_S},
we abbreviate
\[
m (\cY) = m,
\,\,\,\,\,\,
r_l (\cY) = r_l,
\,\,\,\,\,\,
T_l (\cY) = T_l,
\,\,\,\,\,\,
T_l^0 (\cY) = T_l^0,
\,\,\,\,\,\,
D_l (\cY) = D_l,
\]
\[
\gm_{\cY, l} = \gm_l,
\,\,\,\,\,\,
\cS_l (\cY) = \cS_l,
\,\,\,\,\,\,
T_{l, \mu} (\cY) = T_{l, \mu},
\,\,\,\,\,\,
\bt_{\cY, l, \mu} = \bt_{l, \mu},
\andeqn
R_l (\cY) = R_l.
\]
We also define,
still suppressing~$\cY$,
\[
X_l = \bigcup_{i = 0}^{l} \bigcup_{j = 0}^{r_i - 1} h^j (T_i)
\]
for $l = 0, 1, \ldots, m$.
\end{cnv}

\begin{lem}\label{L_6103_Combined}
Let the notation be as in Convention~\ref{N_5618_SuppressY}.
Let $l \in \{ 0, 1, \ldots, m \}$.
Then:
\begin{enumerate}
\item\label{L_6103_Combined_TlmClosed}
$T_{l, \mu}$ is closed for all $\mu \in \cS_l$.
\item\label{L_6103_Combined_Unital}
For all $\mu \in \cS_l$,
$\bt_{l, \mu}$ is a unital \hm.
\item\label{L_6103_Combined_DlUnion_P}
For every $x \in D_l$ there are $t \in \N$
and $\mu \in \cS_l$ of length~$t$ such that
$r_{\mu (1)} + r_{\mu (2)} + \cdots + r_{\mu (t)} = r_l$
and for
$s = 1, 2, \ldots, t$
we have
\[
h^{r_{\mu (1)} + r_{\mu (2)} + \cdots + r_{\mu (s)}} (x)
 \in T^0_{\mu (s)}.
\]
\item\label{L_6103_Combined_DlUnion}
$D_l = \bigcup_{\mu \in \cS_l} T_{l, \mu}$.
\item\label{L_6103_Combined_UseInt}
We have
\[
X_l = \coprod_{i = 0}^{l} \coprod_{j = 0}^{r_i - 1} h^j (T_i^0).
\]
\item\label{L_6103_Combined_j1j2InXl1}
Let $j_1, j_2 \in \{ 0, 1, \ldots, r_l - 1 \}$ be distinct.
Then $h^{j_1} (T_l) \cap h^{j_2} (T_l) \subset X_{l - 1}$.
\item\label{L_6103_Combined_j1j2InXl1_New}
Let $j_1, j_2 \in \{ 0, 1, \ldots, r_l - 1 \}$ be distinct.
Then $h^{j_1} (T_l) \cap h^{j_2} (T_l^0) = \E$.
\item\label{L_6103_Combined_hjxTl}
Let $j \in \{ 0, 1, \ldots, r_l - 1 \}$,
and suppose $x \in T_l$ satisfies $h^j (x) \in X_{l - 1}$.
Then $x \in D_l$.
\item\label{L_6103_Combined_TlXl_1}
$D_l = T_l \cap X_{l - 1}$.
% \item\label{L_6103_Combined_-}
% -
% \item\label{L_6103_Combined_-}
% -
\end{enumerate}
\end{lem}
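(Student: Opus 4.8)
The plan is to establish the nine parts in the order listed, with essentially all the new content going into part~(\ref{L_6103_Combined_DlUnion_P}) and, via it, the covering identity part~(\ref{L_6103_Combined_UseInt}). Part~(\ref{L_6103_Combined_TlmClosed}) is immediate, since $T_{l, \mu}$ is a finite intersection of preimages $h^{-k} (T_{\mu (s)})$ of closed sets under the continuous maps~$h^{k}$. For part~(\ref{L_6103_Combined_Unital}), the defining conditions of $T_{l, \mu}$ say exactly that $h^{r_{\mu (1)} + \cdots + r_{\mu (s - 1)}} (T_{l, \mu}) \S T_{\mu (s)}$ for $s = 1, \ldots, t$; hence $b \mapsto b \circ h^{r_{\mu (1)} + \cdots + r_{\mu (s - 1)}}$ is a \uhm{} from $C (T_{\mu (s)}, M_{r_{\mu (s)}})$ to $C (T_{l, \mu}, M_{r_{\mu (s)}})$, and since $\sum_{s = 1}^{t} r_{\mu (s)} = r_l$ the block-diagonal assembly of these maps is a \uhm{} into $C (T_{l, \mu}, M_{r_l})$, which is precisely $\bt_{l, \mu}$.

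The core is part~(\ref{L_6103_Combined_DlUnion_P}), which I would prove by a first-return walk through the towers. Fix $x \in D_l$. Since $x \in Y = \coprod_{i = 0}^{m} T_i^0$ by Lemma~\ref{L_5430_Basics}(\ref{L_5430_Basics_CvY}), there is a unique $\mu (1)$ with $x \in T_{\mu (1)}^0$, and $\mu (1) < l$: if $\mu (1) = l$ then $x \in T_l^0$, contradicting $x \in D_l = T_l \SM T_l^0$; if $\mu (1) > l$ then $x \in T_l \cap T_{\mu (1)}$ puts $x \in D_{\mu (1)}$, again contradicting $x \in T_{\mu (1)}^0$. Set $\sigma_0 = 0$, and inductively, given $\mu (1), \ldots, \mu (s - 1)$ with $\sigma_{s - 1} = r_{\mu (1)} + \cdots + r_{\mu (s - 1)} < r_l$ and $h^{\sigma_{s - 1}} (x) \in Y$, let $\mu (s)$ be the unique index with $h^{\sigma_{s - 1}} (x) \in T_{\mu (s)}^0$ and put $\sigma_s = \sigma_{s - 1} + r_{\mu (s)}$. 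Then $r_Y ( h^{\sigma_{s - 1}} (x) ) = r_{\mu (s)}$ by Definition~\ref{D_5428_RTY}(\ref{D_5428_RTY_ExactRet}); moreover $h^{r_l} (x) \in Y$ by Definition~\ref{D_5428_RTY}(\ref{D_5428_RTY_Ret}) (as $x \in T_l$), so $r_l - \sigma_{s - 1}$ is a strictly positive return time of $h^{\sigma_{s - 1}} (x)$ to~$Y$, whence $r_{\mu (s)} \le r_l - \sigma_{s - 1}$ and $\sigma_s \le r_l$. As the $r_i$ are strictly positive, the $\sigma_s$ increase strictly, so the walk terminates at some~$t$ with $\sigma_t = r_l$, yielding $\sum_{s = 1}^{t} r_{\mu (s)} = r_l$ together with $h^{\sigma_{s - 1}} (x) \in T_{\mu (s)}^0$ for $s = 1, \ldots, t$. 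Each $\mu (s) < l$: if $\mu (s) \ge l$ then $r_{\mu (s)} \ge r_l$ by Definition~\ref{D_5428_RTY}(\ref{D_5428_RTY_Nondecr}), so $r_{\mu (s)} = r_l$, and then $r_{\mu (s)} \le r_l - \sigma_{s - 1}$ forces $\sigma_{s - 1} = 0$, i.e.\ $s = 1$, contradicting $\mu (1) < l$. Hence $\mu = (\mu (1), \ldots, \mu (t)) \in \cS_l$. Part~(\ref{L_6103_Combined_DlUnion}) then follows: the conditions just produced say $x \in T_{l, \mu}$, so $D_l \S \bigcup_{\mu \in \cS_l} T_{l, \mu}$; conversely, if $x \in T_{l, \mu}$ with $\mu \in \cS_l$ then $x \in T_l \cap T_{\mu (1)}$ with $\mu (1) < l$, so $x \in D_l$.

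The remaining parts reduce to a single observation, $(\star)$: $\bigcup_{j = 0}^{r_l - 1} h^j (D_l) \S X_{l - 1}$. For $(\star)$, given $x \in D_l$ write $x \in T_{l, \mu}$ with $\mu \in \cS_l$ by part~(\ref{L_6103_Combined_DlUnion}); for $0 \le j < r_l = \sigma_t$ choose $s$ with $\sigma_{s - 1} \le j < \sigma_s$, so $h^j (x) = h^{j - \sigma_{s - 1}} ( h^{\sigma_{s - 1}} (x) ) \in h^{j - \sigma_{s - 1}} (T_{\mu (s)})$ with $\mu (s) \le l - 1$ and $j - \sigma_{s - 1} < r_{\mu (s)}$, hence in $X_{l - 1}$. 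Given $(\star)$: part~(\ref{L_6103_Combined_UseInt}) is proved by induction on~$l$ (trivial for $l = 0$, where $T_0 = T_0^0$), writing $X_l = X_{l - 1} \cup \bigcup_{j = 0}^{r_l - 1} h^j (T_l)$, then using $T_l = T_l^0 \coprod D_l$ and $(\star)$ to get $X_l = X_{l - 1} \cup \coprod_{j = 0}^{r_l - 1} h^j (T_l^0)$, the disjointness of all the sets $h^j (T_i^0)$, $(i, j) \in J (\cY)$, being exactly the disjointness already recorded in Lemma~\ref{L_5430_Basics}(\ref{L_5430_Basics_CovX}). For part~(\ref{L_6103_Combined_j1j2InXl1}), if $z = h^{j_1} (y_1) = h^{j_2} (y_2)$ with $y_1, y_2 \in T_l$ and $j_1 < j_2$, then $h^{j_2 - j_1} (y_2) = y_1 \in Y$ with $0 < j_2 - j_1 < r_l$, so $r_Y (y_2) < r_l$ and hence $y_2 \in D_l$ by Definition~\ref{D_5428_RTY}(\ref{D_5428_RTY_ExactRet}); then $z \in h^{j_2} (D_l) \S X_{l - 1}$ by $(\star)$. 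Part~(\ref{L_6103_Combined_j1j2InXl1_New}) follows by combining part~(\ref{L_6103_Combined_j1j2InXl1}) with part~(\ref{L_6103_Combined_UseInt}): a point of $h^{j_1} (T_l) \cap h^{j_2} (T_l^0)$ would lie in $X_{l - 1}$ and also in the index-$l$ piece $h^{j_2} (T_l^0)$ of the disjoint decomposition of~$X_l$. Part~(\ref{L_6103_Combined_hjxTl}): if $x \in T_l^0$, then $h^j (x)$ lies in the index-$l$ piece $h^j (T_l^0)$ of~$X_l$, which is disjoint from~$X_{l - 1}$, contradicting $h^j (x) \in X_{l - 1}$; so $x \in D_l$. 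Part~(\ref{L_6103_Combined_TlXl_1}): the inclusion $D_l \S T_l \cap X_{l - 1}$ is the $j = 0$ case of $(\star)$ together with $D_l \S T_l$, and the reverse inclusion is the $j = 0$ case of part~(\ref{L_6103_Combined_hjxTl}).

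The only genuine obstacle should be part~(\ref{L_6103_Combined_DlUnion_P}), and within it the delicate point is verifying that \emph{every} index $\mu (s)$ the walk produces is strictly less than~$l$. This cannot be done using only the nondecreasing order of the return times~$r_i$; one must also use that $r_l$ is itself a return time of~$x$ to~$Y$ and that the partial sums $\sigma_s$ increase strictly, which together prevent the walk from ``entering'' tower~$l$ before it reaches height~$r_l$. Everything else is bookkeeping around $(\star)$, with the one nontrivial external input being the disjointness of the sets $h^j (T_i^0)$ supplied by Lemma~\ref{L_5430_Basics}.
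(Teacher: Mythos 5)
Your proof is correct and follows essentially the same approach as the paper: the tower-walk in part~(\ref{L_6103_Combined_DlUnion_P}) (using nondecreasing return times to force every index below~$l$), the return-time argument for part~(\ref{L_6103_Combined_j1j2InXl1}), and the disjointness from Lemma~\ref{L_5430_Basics}(\ref{L_5430_Basics_CovX}) to drive parts (\ref{L_6103_Combined_UseInt})--(\ref{L_6103_Combined_TlXl_1}). The only organizational difference is that you abstract out the single observation~$(\star)$, namely $\bigcup_{j=0}^{r_l-1} h^j(D_l) \subset X_{l-1}$, and reuse it for parts (\ref{L_6103_Combined_UseInt}), (\ref{L_6103_Combined_j1j2InXl1}), and (\ref{L_6103_Combined_TlXl_1}); the paper proves the corresponding containment inline in each of those parts (working with the sets $Z_k$ built from interiors $T_i^0$ in the proof of~(\ref{L_6103_Combined_UseInt}), and rederiving it in~(\ref{L_6103_Combined_j1j2InXl1})). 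This is a modest clean-up rather than a different route. One incidental point worth flagging: the exponent in the displayed formula of part~(\ref{L_6103_Combined_DlUnion_P}) as printed, $r_{\mu(1)} + \cdots + r_{\mu(s)}$, is off by one index---it should be $r_{\mu(1)} + \cdots + r_{\mu(s-1)}$ to match the definition of $T_{l,\mu}$ and the way the paper itself invokes the statement later; your proof quietly establishes the correct version (with $\sigma_{s-1}$ in the exponent).
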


\begin{proof}
Part~(\ref{L_6103_Combined_TlmClosed}) is immediate.

We prove~(\ref{L_6103_Combined_Unital}).
Since $T_{l, \mu}$ is closed by part~(\ref{L_6103_Combined_TlmClosed}),
the only thing to check is that $\bt_{l, \mu}$ is unital.
This follows from the relation $\sum_{s = 1}^t r_{\mu (s)} = r_l$.

We prove~(\ref{L_6103_Combined_DlUnion_P}).
We inductively construct a sequence $\mu$
in $\{ 0, 1, \ldots, l - 1 \}$ of length~$t$,
such that
$h^{r_{\mu (1)} + r_{\mu (2)} + \cdots + r_{\mu (s)}} (x) \in Y$
for $s = 1, 2, \ldots, t$, as follows.
By definition,
$x \in \bigcup_{i = 0}^{l - 1} T_i$,
so by Lemma~\ref{L_5430_Basics}(\ref{L_5430_Basics_CvY})
there is $i \in \{ 0, 1, \ldots, l - 1 \}$
such that $x \in T_i^0$.
Define $\mu (1) = i$.
Then $h^{r_{\mu (1)}} (x) \in Y$ by definition.

Given $\mu (1), \, \mu (2), \, \ldots, \, \mu (s)$,
set $p = r_{\mu (1)} + r_{\mu (2)} + \cdots + r_{\mu (s)}$.
If $p = r_l$,
stop, and set $t = s$
and $\mu = (\mu (1), \, \mu (2), \, \ldots, \, \mu (t))$.
Since $h^{r_l} (x) \in Y$,
we must otherwise have $p < r_l$ and $h^p (x) \in Y$.
By Lemma \ref{L_5430_Basics}(\ref{L_5430_Basics_CvY}),
there is $i \in \{ 0, 1, \ldots, m \}$
such that $h^p (x) \in T_i^0$.
We have
\[
r_i = r_Y (h^p (x))
    \leq r_l - p
    < r_l,
\]
so $i < l$ by Definition \ref{D_5428_RTY}(\ref{D_5428_RTY_Nondecr}).
Define $\mu (s + 1) = i$.
Then
$h^{r_{\mu (1)} + r_{\mu (2)} + \cdots + r_{\mu (s + 1)}} (x) \in Y$
by definition.
This completes the induction step.
Part~(\ref{L_6103_Combined_DlUnion_P}) follows.

We prove~(\ref{L_6103_Combined_DlUnion}).
For $\mu \in \cS_l$,
we have $T_{l, \mu} \subset D_l$ by definition.
The reverse inclusion follows from~(\ref{L_6103_Combined_DlUnion_P}).

We prove~(\ref{L_6103_Combined_UseInt}).
The sets on the right in the desired conclusion are disjoint
by Lemma \ref{L_5430_Basics}(\ref{L_5430_Basics_CovX}),
so we need to show that
$X_l = \bigcup_{i = 0}^{l} \bigcup_{j = 0}^{r_i - 1} h^j (T_i^0)$.
It suffices to show that
\begin{equation}\label{Eq_6308_Star}
\bigcup_{i = 0}^{l} \bigcup_{j = 0}^{r_i - 1} h^j (D_i)
 \subset \bigcup_{i = 0}^{l} \bigcup_{j = 0}^{r_i - 1} h^j (T_i^0).
\end{equation}
For $k = 0, 1, \ldots, l$
define
\[
Z_k = \bigcup_{i = 0}^{k} \bigcup_{j = 0}^{r_i - 1} h^j (T_i^0).
\]
We claim that for $j = 0, 1, \ldots, r_k - 1$,
we have $h^j (D_k) \subset Z_k$.

Let $x \in D_k$ and let $j \in \{ 0, 1, \ldots, r_k - 1 \}$.
Choose $\mu \in \cS_k$
as in~(\ref{L_6103_Combined_DlUnion_P}).
Let $t$ be the length of~$\mu$.
Then choose $s \in \{ 0, 1, \ldots, t - 1 \}$ such that,
with $p = r_{\mu (1)} + r_{\mu (2)} + \cdots + r_{\mu (s)}$,
we have $p \leq j < p + r_{\mu (s + 1)}$.
Now $h^p (x) \in T^0_{\mu (s + 1)}$ and
$j - p \leq r_{\mu (s + 1)} - 1$,
so
\[
h^j (x) = h^{j - p} (h^p (x))
 \in h^{j - p} \big( T^0_{\mu (s + 1)} \big)
 \subset Z_k.
\]
The claim is proved.

Now
\[
\bigcup_{i = 0}^{l} \bigcup_{j = 0}^{r_i - 1} h^j (D_i)
 \subset \bigcup_{i = 0}^{l} Z_i
 = Z_l,
\]
which is~(\ref{Eq_6308_Star}).
This completes the proof of~(\ref{L_6103_Combined_UseInt}).

We prove~(\ref{L_6103_Combined_j1j2InXl1}).
\Wolog{} $j_1 < j_2$.
Let $x \in h^{j_1} (T_l) \cap h^{j_2} (T_l)$.
Set $y = h^{-j_2} (x) \in T_l$.
Then $h^{j_2 - j_1} (y) = h^{-j_1} (x) \in T_l$.
Thus $r_Y (y) \leq j_2 - j_1 < r_l$.
Therefore $y \in D_l$.
Part~(\ref{L_6103_Combined_DlUnion}) provides $\mu \in \cS_l$
such that $y \in T_{l, \mu}$.
Let $t$ be the length of~$\mu$.
Choose $s \in \{ 0, 1, \ldots, t - 1 \}$ such that,
with
\[
p = r_{\mu (1)} + r_{\mu (2)} + \cdots + r_{\mu (s)},
\]
we have $p \leq j_2 < p + r_{\mu (s + 1)}$.
Then
\[
x = h^{j_2} (y) \in h^{j_2 - p} (T_{\mu (s + 1)}) \subset X_{l - 1}.
\]
This completes the proof of~(\ref{L_6103_Combined_j1j2InXl1}).

Part~(\ref{L_6103_Combined_j1j2InXl1_New})
follows from part~(\ref{L_6103_Combined_j1j2InXl1}),
since applying part~(\ref{L_6103_Combined_UseInt})
for $l$ and also for $l - 1$ in place of~$l$
shows that $h^{j_2} (T_l^0) \cap X_{l - 1} = \E$.

For~(\ref{L_6103_Combined_hjxTl}),
if $x \in T_l^0$
then disjointness in~(\ref{L_6103_Combined_UseInt})
implies that $h^j (x) \not\in X_{l - 1}$
for $j = 0, 1, \ldots, r_l - 1$.

We prove~(\ref{L_6103_Combined_TlXl_1}).
It is clear that $D_l \subset T_l \cap X_{l - 1}$.
For the reverse,
take $j = 0$ in part~(\ref{L_6103_Combined_hjxTl}).
\end{proof}

\begin{lem}\label{L_5615_RanGmContained}
Let the notation be as in Convention~\ref{N_5618_SuppressY},
Definition~\ref{D_5428_RTY_Part2},
and Proposition~\ref{L_5429_FromAY}.
Assume that
\[
l \in \{ 0, 1, \ldots, m \},
\,\,\,\,\,\,
\mu \in \cS_l,
\,\,\,\,\,\,
x \in T_{l, \mu},
\andeqn
a \in C^* (\Z, X, h)_{\cY}.
\]
Then $\gm_{r_l, T_l} (a) (x) = \bt_{l, \mu} ( \gm_{l - 1} (a)) (x)$.
\end{lem}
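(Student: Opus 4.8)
The plan is to compute both sides entrywise, using the explicit formula~(\ref{Eq_5615_ByEntry}) from Proposition~\ref{L_5429_FromAY} for $\gm_{r_l, T_l}$ and the formula for $\bt_{l, \mu}$ from Definition~\ref{D_5501_S}, combined with the entrywise formula for each $\gm_{r_i, T_i}$ appearing in $\gm_{l-1}(a)$. Write $a$ as a formal series $\sum_{n = -\I}^{\I} a_n u^n$ with $a_n \in C(X)$. By~(\ref{Eq_5615_ByEntry}), for $j, k \in \{0, 1, \ldots, r_l - 1\}$ we have $\gm_{r_l, T_l}(a)(x)_{j,k} = a_{j-k}(h^j(x))$. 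So everything reduces to showing that the right-hand side $\bt_{l,\mu}(\gm_{l-1}(a))(x)$ has the same entries.

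First I would unwind the block-diagonal structure of $\bt_{l,\mu}$. Let $t$ be the length of $\mu$ and set $p_0 = 0$ and $p_s = r_{\mu(1)} + \cdots + r_{\mu(s)}$ for $s = 1, \ldots, t$, so that $p_t = r_l$ by the definition of $\cS_l$. The matrix $\bt_{l,\mu}(b_0, \ldots, b_{l-1})(x)$ is block-diagonal with $s$-th block (for $s = 1, \ldots, t$) equal to $(b_{\mu(s)} \circ h^{p_{s-1}})(x) \in M_{r_{\mu(s)}}$, occupying rows and columns $p_{s-1}, p_{s-1}+1, \ldots, p_s - 1$. Thus for indices $j, k$ in the same block $s$ the $(j,k)$ entry is $\big(b_{\mu(s)}(h^{p_{s-1}}(x))\big)_{j - p_{s-1},\, k - p_{s-1}}$, and it is $0$ when $j$ and $k$ lie in different blocks. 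Now substitute $b_i = \gm_{r_i, T_i}(a)$: since $x \in T_{l,\mu}$ we have $h^{p_{s-1}}(x) \in T_{\mu(s)}$, so $\gm_{r_{\mu(s)}, T_{\mu(s)}}(a)$ is defined at that point, and by~(\ref{Eq_5615_ByEntry}) again its $(j - p_{s-1},\, k - p_{s-1})$ entry equals $a_{(j - p_{s-1}) - (k - p_{s-1})}\big(h^{j - p_{s-1}}(h^{p_{s-1}}(x))\big) = a_{j-k}(h^j(x))$. This matches $\gm_{r_l, T_l}(a)(x)_{j,k}$ exactly when $j, k$ are in the same block.

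The remaining point — and the only place where real content enters — is the off-block case: I must show that if $j$ and $k$ lie in different blocks then $a_{j-k}(h^j(x)) = 0$, so that the block-diagonal form of $\bt_{l,\mu}$ is consistent with $\gm_{r_l, T_l}(a)(x)$. Here is where the orbit-breaking condition is used. Say $j$ is in block $s$ and $k$ is in block $s'$. If $s > s'$, then $j - k > 0$; writing $n = j - k$, the relevant constraint is that $a_n \in C_0(X \setminus Y_n)$ where $Y_n = \bigcup_{i=0}^{n-1} h^i(Y)$, by Proposition~\ref{P_5429_CharOB} and the fact that $a \in C^*(\Z, X, h)_{\cY} = C^*(\Z, X, h)_Y$ and $E(a u^{-n}) = a_n$. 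So it suffices to show $h^j(x) \in h^i(Y)$ for some $i$ with $0 \le i \le n - 1 = j - k - 1$; the natural candidate is $i = j - p_{s'}$ or a nearby index, using that $h^{p_{s''}}(x) \in Y$ for every $0 \le s'' \le t$ (this is exactly what membership in $T_{l,\mu}$, together with $h^{r_l}(x) \in Y$, gives). The case $s < s'$ is handled symmetrically by passing to $a^* $, or directly via the other family of conditions $E(a u^{-n}) \in C_0(X \setminus Y_n)$ for $n < 0$. The main obstacle will be bookkeeping the inequalities relating $j$, $k$, the partial sums $p_s$, and the return times $r_{\mu(s)}$ carefully enough to locate the correct power $h^i(Y)$ containing $h^j(x)$; once that is set up, vanishing of $a_{j-k}$ there is immediate from Proposition~\ref{P_5429_CharOB}. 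Assembling the entrywise agreement over all $(j,k)$ then yields $\gm_{r_l, T_l}(a)(x) = \bt_{l,\mu}(\gm_{l-1}(a))(x)$.
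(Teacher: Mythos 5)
Your proposal follows essentially the same route as the paper's proof: both reduce to entrywise comparison via formula~(\ref{Eq_5615_ByEntry}), check the in-block entries directly, and handle the off-block entries by locating a power $h^i(Y)$ containing $h^j(x)$ with $0 \le i \le j-k-1$ and invoking Proposition~\ref{P_5429_CharOB}, using the $*$-structure to reduce to $j \ge k$. The only cosmetic difference is the choice of exponent: the paper takes $i = j - p$ with $p$ the start of $j$'s block (so that $p \le j < p + r_{\mu(s+1)}$ and $k < p$ gives $j - p < j - k$), while you propose $i = j - p_{s'}$ with $p_{s'}$ the endpoint of $k$'s block — both are valid since every partial sum $p_{s''}$ with $k < p_{s''} \le j$ satisfies $h^{p_{s''}}(x) \in Y$.
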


\begin{proof}
We follow Convention~\ref{Cv_5615_MatUnit}.
Let $j, k \in \{ 0, 1, \ldots, r_l - 1 \}$.
We show that
$\gm_{r_l, T_l} (a) (x)_{j, k}
 = \bt_{l, \mu} ( \gm_{l - 1} (a)) (x)_{j, k}$.
Since we are dealing with *-\hm{s},
we need only consider the case $j \geq k$.

Let $\sum_{n = \I}^{\I} a_n u^n$
be the formal series for~$a$.
The statement~(\ref{Eq_5615_ByEntry})
in Proposition~\ref{L_5429_FromAY}
gives $\gm_{r_l, T_l} (a) (x)_{j, k} = a_{j - k} (h^j (x))$.

Let $t$ be the length of~$\mu$.
By definition,
there exists $s \in \{ 0, 1, \ldots, t - 1 \}$
such that,
setting
\[
p = r_{\mu (1)} + r_{\mu (2)} + \cdots + r_{\mu (s)},
\]
we have
\[
p \leq j < p + r_{\mu (s + 1)}.
\]
% \[
% r_{\mu (1)} + r_{\mu (2)} + \cdots + r_{\mu (s)}
%  \leq j
%  < r_{\mu (1)} + r_{\mu (2)} + \cdots + r_{\mu (s + 1)}.
% \]

There are two cases.
First suppose that $k < p$.
The matrix
$\bt_{l, \mu} ( \gm_{l - 1} (a)) (x)$
is block diagonal
with blocks of sizes
$r_{\mu (1)}, \, r_{\mu (2)}, \, \ldots, \, r_{\mu (t - 1)}$,
and the $(j, k)$ position is in none of these blocks,
so $\bt_{l, \mu} ( \gm_{l - 1} (a)) (x)_{j, k} = 0$.
Also,
$a_{j - k} (h^j (x)) = a_{j - k} (h^{j - p} (h^p (x)))$.
Since $h^p (x) \in Y$
and $j - p < j - k$,
it follows from Proposition~\ref{P_5429_CharOB}
that $a_{j - k} (h^{j - p} (h^p (x))) = 0$.
Thus
$\gm_{r_l, T_l} (a) (x)_{j, k}
 = \bt_{l, \mu} ( \gm_{l - 1} (a)) (x)_{j, k}$.

Otherwise,
we have $p \leq k \leq j$.
For $y \in T_{\mu (s + 1)}$,
we then have
\[
\gm_{r_{\mu (s + 1)}, T_{\mu (s + 1)}} (y)_{j - p, \, k - p}
 = a_{j - k} (h^{j - p} (y))
\]
by the statement~(\ref{Eq_5615_ByEntry})
in Proposition~\ref{L_5429_FromAY}.
Using this equation with $y = h^p (x)$
at the second step,
and the definitions of $\bt_{l, \mu}$ and $\gm_{l - 1}$
at the first step,
we get
\begin{align*}
\bt_{l, \mu} ( \gm_{l - 1} (a)) (x)_{j, k}
& = \gm_{r_{\mu (s + 1)}, T_{\mu (s + 1)}} ( h^p (x))_{j - p, \, k - p}
\\
& = a_{j - k} (h^{j - p} (h^p (x)))
  = \gm_{r_l, T_l} (a) (x)_{j, k},
\end{align*}
as desired.
This completes the proof.
\end{proof}

\begin{cor}\label{C_5618_RanGmContained_Alt}
Let the notation be as in Convention~\ref{N_5618_SuppressY}.
Let $l \in \{ 0, 1, \ldots, m \}$
and let
\[
b = (b_0, b_1, \ldots, b_l)
 \in \bigoplus_{j = 0}^{l} C \big( T_j, M_{r_j} \big)
\]
be in the range of $\gm_l$.
Then for all $\mu \in \cS_l$ we have
\[
b_l |_{T_{l, \mu}} = \bt_{l, \mu} (b_0, b_1, \ldots, b_{l - 1}).
\]
\end{cor}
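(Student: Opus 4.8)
The plan is to deduce this directly from Lemma~\ref{L_5615_RanGmContained} by choosing a preimage. Since $b$ lies in the range of $\gm_l = \gm_{\cY, l}$, we may fix $a \in C^* (\Z, X, h)_{\cY}$ with $\gm_l (a) = b$. The first step is then purely bookkeeping: by the definition of $\gm_{\cY, l}$ in Definition~\ref{D_5428_RTY_Part2}, we have
\[
\gm_l (a) = \big( \gm_{r_0, T_0} (a), \, \gm_{r_1, T_1} (a), \, \ldots, \, \gm_{r_l, T_l} (a) \big),
\]
so comparing coordinates gives $b_j = \gm_{r_j, T_j} (a)$ for $j = 0, 1, \ldots, l$. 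In particular $b_l = \gm_{r_l, T_l} (a)$, and $\gm_{l - 1} (a) = (b_0, b_1, \ldots, b_{l - 1})$, again by the definition of $\gm_{\cY, l - 1}$.

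The second step is to apply Lemma~\ref{L_5615_RanGmContained} pointwise. Fix $\mu \in \cS_l$; if $T_{l, \mu} = \E$ the asserted equality of restrictions is vacuous, so assume $T_{l, \mu} \neq \E$ and let $x \in T_{l, \mu}$. Lemma~\ref{L_5615_RanGmContained}, applied with this $a$, $l$, $\mu$, and $x$, yields
\[
\gm_{r_l, T_l} (a) (x) = \bt_{l, \mu} ( \gm_{l - 1} (a)) (x).
\]
Substituting $b_l = \gm_{r_l, T_l} (a)$ and $\gm_{l - 1} (a) = (b_0, \ldots, b_{l - 1})$ from the first step turns this into $b_l (x) = \bt_{l, \mu} (b_0, b_1, \ldots, b_{l - 1}) (x)$. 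Since $x \in T_{l, \mu}$ was arbitrary, this is exactly $b_l |_{T_{l, \mu}} = \bt_{l, \mu} (b_0, b_1, \ldots, b_{l - 1})$, as required.

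There is essentially no obstacle here: the corollary is a restatement of Lemma~\ref{L_5615_RanGmContained} freed from the hypothesis ``$a \in C^* (\Z, X, h)_{\cY}$'' and phrased in terms of an arbitrary element $b$ of the range of $\gm_l$. The only point that requires a moment's care is matching the coordinates of $b$ with the maps $\gm_{r_j, T_j}(a)$ and recognizing that the truncation $(b_0, \ldots, b_{l-1})$ is precisely $\gm_{l-1}(a)$, which is immediate from the way $\gm_{\cY, l}$ and $\gm_{\cY, l-1}$ are defined (indeed $\gm_{\cY, l - 1} = \pi_{\cY, l - 1} \circ \gm_{\cY}$ and likewise for $l$).
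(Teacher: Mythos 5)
Your proof is correct and matches the paper's approach: choose a preimage $a$ of $b$ under $\gm_l$, observe that $b_l = \gm_{r_l, T_l}(a)$ and $(b_0,\ldots,b_{l-1}) = \gm_{l-1}(a)$, and apply Lemma~\ref{L_5615_RanGmContained}. The paper's one-line proof mentions ``induction on $l$'', but as your argument shows, no induction is actually required here --- the lemma already holds for all $l$ simultaneously, so direct substitution suffices once a preimage is fixed.
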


\begin{proof}
This follows from
Lemma~\ref{L_5615_RanGmContained}
by induction on~$l$.
\end{proof}

\begin{cor}\label{C_5618_RanGmContained}
Let the notation be as in Convention~\ref{N_5618_SuppressY}.
For $l = 0, 1, \ldots, m$,
the range of $\gm_l$ is contained in~$R_l$.
\end{cor}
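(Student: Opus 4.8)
The plan is a straightforward induction on $l$, using Corollary~\ref{C_5618_RanGmContained_Alt} to supply exactly the compatibility condition appearing in the definition~(\ref{Eq_5618_RSHD}) of $R_l$.

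For the base case $l = 0$, the range of $\gm_0 = \gm_{r_0, T_0}$ is contained in $C(T_0, M_{r_0}) = R_0$ by definition, so there is nothing to prove.

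For the inductive step, assume the range of $\gm_{l-1}$ is contained in $R_{l-1}$, and let $b = (b_0, b_1, \ldots, b_l)$ lie in the range of $\gm_l$, say $b = \gm_l(a)$ with $a \in C^* (\Z, X, h)_{\cY}$. First I would observe, directly from the formula for $\gm_l$ in Definition~\ref{D_5428_RTY_Part2}, that $(b_0, b_1, \ldots, b_{l-1}) = \gm_{l-1}(a)$, so by the induction hypothesis $(b_0, b_1, \ldots, b_{l-1}) \in R_{l-1}$. Next, Corollary~\ref{C_5618_RanGmContained_Alt} applied to $b$ gives $b_l |_{T_{l, \mu}} = \bt_{l, \mu}(b_0, b_1, \ldots, b_{l-1})$ for every $\mu \in \cS_l$, which is precisely the condition defining membership in $R_l$ in~(\ref{Eq_5618_RSHD}). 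Hence $b \in R_l$, completing the induction.

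There is essentially no obstacle here; the only thing to be careful about is the bookkeeping identifying the first $l$ coordinates of $\gm_l(a)$ with $\gm_{l-1}(a)$, and making sure the quantifier "for all $\mu \in \cS_l$ and all $x \in T_{l,\mu}$" in the definition of $R_l$ matches exactly what Corollary~\ref{C_5618_RanGmContained_Alt} provides. Both are immediate from the definitions already in place.
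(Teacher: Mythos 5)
Your proof is correct and is essentially the paper's proof, just written out: the paper simply says the result is immediate from Corollary~\ref{C_5618_RanGmContained_Alt} and the definition~(\ref{Eq_5618_RSHD}) of $R_l$, and the implicit induction you make explicit (to verify the $(b_0,\ldots,b_{l-1}) \in R_{l-1}$ clause) is exactly what that terse statement is hiding.
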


\begin{proof}
This is immediate from
Corollary~\ref{C_5618_RanGmContained_Alt}
and (\ref{Eq_5618_RSHD}) in Definition~\ref{D_5501_S}.
\end{proof}

\begin{lem}\label{L_5618_Surj}
Let the notation be as in Convention~\ref{N_5618_SuppressY}.
Let $l \in \{ 0, 1, \ldots, m \}$,
and let $b \in R_l$.
Then there exists $a \in C^* (\Z, X, h)_{\cY}$
such that $\gm_l (a) = b$.
\end{lem}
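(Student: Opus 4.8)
The plan is to prove this by induction on $l$, correcting one coordinate at a time; the base case $l = 0$ is subsumed by taking $a' = 0$ in what follows. Write $b = (b_0, b_1, \ldots, b_l) \in R_l$. By~(\ref{Eq_5618_RSHD}) in Definition~\ref{D_5501_S} we have $(b_0, \ldots, b_{l-1}) \in R_{l-1}$, so the inductive hypothesis gives $a' \in C^*(\Z, X, h)_{\cY}$ with $\gm_{l-1}(a') = (b_0, \ldots, b_{l-1})$. By Lemma~\ref{L_5615_RanGmContained} and the defining relations of $R_l$, for every $\mu \in \cS_l$ and $x \in T_{l,\mu}$ we have $\gm_{r_l, T_l}(a')(x) = \bt_{l,\mu}(\gm_{l-1}(a'))(x) = \bt_{l,\mu}(b_0, \ldots, b_{l-1})(x) = b_l(x)$, so by Lemma~\ref{L_6103_Combined}(\ref{L_6103_Combined_DlUnion}) the function $d := b_l - \gm_{r_l, T_l}(a') \in C(T_l, M_{r_l})$ vanishes on $D_l$. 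It then suffices to find $a'' \in C^*(\Z, X, h)_{\cY}$ with $\gm_{r_i, T_i}(a'') = 0$ for $i < l$ and $\gm_{r_l, T_l}(a'') = d$, for then $a := a' + a''$ satisfies $\gm_l(a) = b$.

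To build $a''$ I will prescribe its formal coefficients: $a''_n = 0$ for $|n| \ge r_l$, while for $|n| \le r_l - 1$ the function $a''_n \in C(X)$ is obtained by first defining it on the compact set $X_l$ and then extending. On $X_l$, let $a''_n$ be $0$ on $X_{l-1}$ and, on $h^j(T_l)$ for $j = 0, 1, \ldots, r_l - 1$, let it send $h^j(x)$ to $d(x)_{j,\, j-n}$ when $j - n \in \{0, 1, \ldots, r_l - 1\}$ and to $0$ otherwise (following Convention~\ref{Cv_5615_MatUnit}). These $r_l + 1$ prescriptions are continuous on their closed domains and agree on overlaps: an overlap $h^{j_1}(T_l) \cap h^{j_2}(T_l)$ with $j_1 \neq j_2$ lies in $X_{l-1}$ by Lemma~\ref{L_6103_Combined}(\ref{L_6103_Combined_j1j2InXl1}), and if $h^j(x) \in h^j(T_l) \cap X_{l-1}$ with $x \in T_l$ then $x \in D_l$ by Lemma~\ref{L_6103_Combined}(\ref{L_6103_Combined_hjxTl}) and hence $d(x) = 0$, so all pieces take the value $0$ on every overlap; the pasting lemma for finitely many closed sets then gives $a''_n \in C(X_l)$. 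Using Definition~\ref{D_5428_RTY}(\ref{D_5428_RTY_ExactRet}) --- a point of $T_l^0$ has first return time to~$Y$ exactly $r_l$ --- one checks that $a''_n$ already vanishes on $Y_n \cap X_l$, where $Y_n$ is as in Proposition~\ref{P_5429_CharOB}: if $h^j(x) \in Y_n$ with $x \in T_l^0$, then $j - n$ must lie outside $\{0, \ldots, r_l - 1\}$, whence $a''_n(h^j(x)) = 0$. Pasting on the closed set $X_l \cup Y_n$ and then applying the Tietze extension theorem yields $a''_n \in C_0(X \SM Y_n)$. Setting $a'' = \sum_n a''_n u^n$, Proposition~\ref{P_5429_CharOB} shows $a'' \in C^*(\Z, X, h)_{\cY}$.

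Finally I compute $\gm_{r_i, T_i}(a'')$ from the entry formula~(\ref{Eq_5615_ByEntry}). For $i < l$ and $x \in T_i$, each point $h^j(x)$ with $0 \le j \le r_i - 1$ lies in $X_{l-1}$, where every $a''_n$ vanishes, so $\gm_{r_i, T_i}(a'') = 0$. For $x \in T_l$ and $j, k \in \{0, \ldots, r_l - 1\}$, with $n = j - k$ we get $\gm_{r_l, T_l}(a'')(x)_{j,k} = a''_{j-k}(h^j(x))$; when $x \in T_l^0$ this equals $d(x)_{j,k}$ by construction, and when $x \in D_l$ both $d(x)_{j,k}$ and $a''_{j-k}(h^j(x))$ vanish --- the former since $d|_{D_l} = 0$, the latter since $h^j(x) \in X_{l-1}$ for $0 \le j \le r_l - 1$ by Lemma~\ref{L_6103_Combined}(\ref{L_6103_Combined_DlUnion_P}) together with Lemma~\ref{L_6103_Combined}(\ref{L_6103_Combined_UseInt}). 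Hence $\gm_{r_l, T_l}(a'') = d$, and the induction is complete. I expect the main obstacle to be exactly the well-definedness and continuity of the coefficients $a''_n$: since $Y$ need not be open the levels $h^j(T_l)$ genuinely overlap, and the construction is consistent only because $d$ has been arranged to vanish precisely on the gluing set $D_l$; identifying, via Lemma~\ref{L_6103_Combined}, which overlaps fall inside $X_{l-1}$ is the crux, and the remaining verifications are routine.
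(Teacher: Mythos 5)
Your proof is correct and follows essentially the same route as the paper: induction on $l$, reduction (via the inductive hypothesis) to the case where the first $l$ components vanish, and then a Tietze-extension construction of the coefficient functions hinging on the overlap analysis of Lemma~\ref{L_6103_Combined} and on the fact that $d$ has been forced to vanish on $D_l$. The only organizational differences from the paper's argument are cosmetic --- you build all coefficients $a''_n$ for $|n| \leq r_l - 1$ simultaneously and check the $Y_n$-vanishing directly for both signs of $n$, whereas the paper works one diagonal stripe $f u^n$ at a time for $n \geq 0$ and appeals to the $*$-homomorphism property to handle $n < 0$ by adjoints; the underlying consistency conditions for the Tietze step are the same ones the paper labels (4), (5), (6).
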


\begin{proof}
The proof is by induction on~$l$.
We follow Convention~\ref{Cv_5615_MatUnit}.

Suppose first that $l = 0$.
We have $R_0 = C (T_0, M_{r_0})$ and $\gm_0 = \gm_{r_0, T_0}$.
Let $b \in R_0$,
and write $b = \sum_{j, k = 0}^{r_0 - 1} e_{j, k} \otimes b_{j, k}$
with $b_{j, k} \in C (T_0)$ for $j, k = 0, 1, \ldots, r_0 - 1$.
We prove that for $n = 0, 1, \ldots, r_0 - 1$
the element
$c = \sum_{j = n}^{r_0 - 1} e_{j, j - n} \otimes b_{j, j - n}$
is in the range of $\gm_{r_0, T_0}$.
Since $\gm_{r_0, T_0}$ is a *-\hm,
knowing this for all $b \in R_0$
and all $n \in \{ 0, 1, \ldots, r_0 - 1 \}$ will imply the result.

Since $r_Y (y) \geq r_0$ for all $y \in Y$,
the sets
\[
Y, \, h (Y), \, \ldots, \, h^{n - 1} (Y), \,
  h^n (T_0), \, h^{n + 1} (T_0), \, \ldots, \, h^{r_0 - 1} (T_0)
\]
are disjoint closed subsets of~$X$.
Therefore the Tietze Extension Theorem provides $f \in C (X)$
such that $f |_{h^{j} (Y)} = 0$
for $j = 0, 1, \ldots, n - 1$
and $f |_{h^{j} (T_0)} = b_{j, j - n} \circ h^{-j}$
for $j = n, \, n + 1, \, \ldots, \, r_0 - 1$.
It follows from Proposition~\ref{P_5429_CharOB}
that $f u^n \in C^* (\Z, X, h)_{\cY}$
and from the formula~(\ref{Eq_5615_ByEntry})
in Proposition~\ref{L_5429_FromAY}
that $\gm_{r_0, T_0} (f u^n) = c$.

Now assume the result is known for $l - 1$;
we prove it for~$l$.
Let $b = (b_0, b_1, \ldots, b_l) \in R_l$.
We first consider the case
$b_0 = b_1 = \cdots = b_{l - 1} = 0$.
% of the form $b = (0, 0, \ldots, 0, b_l)$
% with $b_l \in C (T_l, M_{r_l})$.
Write
\[
b_l = \sum_{j, k = 0}^{r_l - 1} e_{j, k} \otimes (b_l)_{j, k}
\]
with $(b_l)_{j, k} \in C (T_l)$ for $j, k = 0, 1, \ldots, r_l - 1$.
We prove that for $n = 0, 1, \ldots, r_l - 1$
and taking
\[
c = \sum_{j = n}^{r_l - 1} e_{j, j - n} \otimes (b_l)_{j, j - n},
\]
the element $(0, 0, \ldots, 0, c)$
is in the range of $\gm_{l}$.
Since $\gm_{l}$ is a *-\hm,
knowing this for all $b$
such that $(0, 0, \ldots, 0, b) \in R_l$
and all $n \in \{ 0, 1, \ldots, r_l - 1 \}$ will prove the result
in this case.

We first claim that there exists $f \in C (X)$
such that:
\begin{enumerate}
\item\label{L_5618_Surj_Xl_1}
$f |_{X_{l - 1}} = 0$.
\item\label{L_5618_Surj_Smallj}
$f |_{h^{j} (Y)} = 0$
for $j = 0, 1, \ldots, n - 1$.
\item\label{L_5618_Surj_Largej}
$f |_{h^{j} (T_l)} = (b_l)_{j, j - n} \circ h^{-j}$
for $j = n, \, n + 1, \, \ldots, \, r_l - 1$.
\setcounter{TmpEnumi}{\value{enumi}}
\end{enumerate}
The subsets of~$X$ involved in (\ref{L_5618_Surj_Xl_1}),
(\ref{L_5618_Surj_Smallj}), and~(\ref{L_5618_Surj_Largej})
are all closed,
so it suffices to show that the formulas agree on
all the intersections of these sets,
and then apply the Tietze Extension Theorem.
Thus, we prove:
\begin{enumerate}
\setcounter{enumi}{\value{TmpEnumi}}
\item\label{L_5618_Surj_BothBig}
If $j_1, j_2 \in \{ n, \, n + 1, \, \ldots, \, r_l - 1 \}$
and $x \in h^{j_1} (T_l) \cap h^{j_2} (T_l)$,
then
\[
(b_l)_{j_1, j_1 - n} (h^{- j_1} (x))
 = (b_l)_{j_2, j_2 - n} (h^{- j_2} (x)).
\]
\item\label{L_5618_Surj_SmallBig}
If $j_1 \in \{ 0, 1, \ldots, n - 1 \}$,
$j_2 \in \{ n, \, n + 1, \, \ldots, \, r_l - 1 \}$,
and $x \in h^{j_1} (Y) \cap h^{j_2} (T_l)$,
then $(b_l)_{j_2, j_2 - n} (h^{- j_2} (x)) = 0$.
\item\label{L_5618_Surj_BigXl1}
If $j \in \{ 0, 1, \ldots, r_l - 1 \}$
and $x \in h^j (T_l) \cap X_{l - 1}$,
then $(b_l)_{j, j - n} (h^{- j} (x)) = 0$.
\end{enumerate}
(We really only need~(\ref{L_5618_Surj_BigXl1})
for $j \in \{ n, \, n + 1, \, \ldots, \, r_l - 1 \}$,
but it is convenient to take it as stated.
For all other possible intersections of two of the sets in
(\ref{L_5618_Surj_Xl_1}),
(\ref{L_5618_Surj_Smallj}), and (\ref{L_5618_Surj_Largej}),
the two formulas automatically agree because both give zero.)

We first prove~(\ref{L_5618_Surj_BigXl1}).
Set $y = h^{- j} (x)$.
Then $y \in T_l$ and $h^j (y) \in X_{l - 1}$,
so $y \in D_l$ by
Lemma \ref{L_6103_Combined}(\ref{L_6103_Combined_hjxTl}).
Lemma \ref{L_6103_Combined}(\ref{L_6103_Combined_DlUnion})
provides $\mu \in \cS_l$ such that $y \in T_{l, \mu}$.
By hypothesis (see Definition~\ref{D_5501_S}),
we have $b_l (y) = \bt_{l, \mu} (b_0, b_1, \ldots, b_{l - 1}) (y)$,
which is zero since $b_0 = b_1 = \cdots = b_{l - 1} = 0$.
So $(b_l)_{j, j - n} (h^{- j} (x)) = (b_l)_{j, j - n} (y) = 0$,
as desired.

For~(\ref{L_5618_Surj_BothBig}),
we can obviously assume $j_1 \neq j_2$.
Then $x \in X_{l - 1}$
by Lemma \ref{L_6103_Combined}(\ref{L_6103_Combined_j1j2InXl1}).
Two applications of~(\ref{L_5618_Surj_BigXl1})
now show that
\[
(b_l)_{j_1, j_1 - n} (h^{- j_1} (x))
 = 0
 = (b_l)_{j_2, j_2 - n} (h^{- j_2} (x)).
\]

For~(\ref{L_5618_Surj_SmallBig}),
we can assume that $j_1$ is the least number in
$\{ 0, 1, \ldots, n - 1 \}$ such that $x \in h^{j_1} (Y)$.
Set $y = h^{- j_1} (x)$.
Then $y \in Y$ but $h^j (y) \not\in Y$
for $j = 1, 2, \ldots, j_1$.
By Lemma~\ref{L_5430_Basics}(\ref{L_5430_Basics_CvY}),
there is $i \in \{ 0, 1, \ldots, m \}$
such that $y \in T_i^0$.
Then $j_1 \leq r_i - 1$.
Suppose first $i > l$.
We have
\[
h^{j_1} (y) = x \in h^{j_2} (T_l) \subset X_l \subset X_{i - 1}.
\]
Since $y \in T_i$ and $j_1 \leq n \leq r_l - 1  \leq r_i - 1$,
Lemma \ref{L_6103_Combined}(\ref{L_6103_Combined_hjxTl})
implies $y \in D_i$.
Since $D_i \cap T_i^0 = \E$,
this is a contradiction.
Next, suppose $i = l$.
Then Lemma \ref{L_6103_Combined}(\ref{L_6103_Combined_j1j2InXl1_New})
implies $j_1 = j_2$,
contradicting $j_1 \leq n - 1 < n \leq j_2$.
Thus we must have $i < l$.
Since $j_1 \leq r_i - 1$,
we have $x = h^{j_1} (y) \in X_{l - 1}$.
Thus $x \in h^{j_2} (T_l) \cap X_{l - 1}$,
so $(b_l)_{j_2, j_2 - n} (h^{- j_2} (x)) = 0$
by~(\ref{L_5618_Surj_BigXl1}).
The claim is proved.

Let $f$ be as in the claim.
It follows from Proposition~\ref{P_5429_CharOB}
and part~(\ref{L_5618_Surj_Smallj}) of the claim
that $f u^n \in C^* (\Z, X, h)_{\cY}$.
Combining the formula~(\ref{Eq_5615_ByEntry})
in Proposition~\ref{L_5429_FromAY}
with part~(\ref{L_5618_Surj_Xl_1}) of the claim
gives $\gm_{r_i, T_i} (f u^n) = 0$
for $i = 0, 1, \ldots, l - 1$,
and combining it with part~(\ref{L_5618_Surj_Largej}) of the claim
gives $\gm_{r_l, T_l} (f u^n) = c$.
The completes the proof of the special case
$b_0 = b_1 = \cdots = b_{l - 1} = 0$.

We now drop the assumption that
$b_0 = b_1 = \cdots = b_{l - 1} = 0$.
It is clear that $(b_0, b_1, \ldots, b_{l - 1}) \in R_{l - 1}$.
By the induction hypothesis,
there is $a_0 \in C^* (\Z, X, h)_{\cY}$
such that $\gm_{l - 1} (a_0) = (b_0, b_1, \ldots, b_{l - 1})$.
Then $\gm_l (a_0) \in R_l$ by Corollary~\ref{C_5618_RanGmContained}.
Therefore
\[
\big( 0, \, 0, \, \ldots, \, 0, \, b_l - \gm_{r_l, T_l} (a_0) \big)
 = b - \gm_l (a_0)
 \in R_l.
\]
By the case already done,
there is $a_1 \in C^* (\Z, X, h)_{\cY}$
such that
% \[
% \gm_l (a_1)
%  = \big( 0, \, 0, \, \ldots, \, 0, \,
%    b_l - \gm_{r_l, T_l} (a_0) \big).
% \]
$\gm_l (a_1) = b - \gm_l (a_0)$.
Then $a = a_0 + a_1 \in C^* (\Z, X, h)_{\cY}$
and satisfies $\gm_l (a) = b$.
\end{proof}

The following lemma is now easy.
Waiting until now to prove it
avoids very complicated notation.

\begin{lem}\label{L_5619_WellDf}
Let the notation be as in Convention~\ref{N_5618_SuppressY}.
Let $l \in \{ 1, 2, \ldots, m \}$,
let $b \in R_{l - 1}$,
let $\mu, \nu \in \cS_l$,
and let $x \in T_{l, \mu} \cap T_{l, \nu}$.
Then $\bt_{l, \mu} (b) (x) = \bt_{l, \nu} (b) (x)$.
\end{lem}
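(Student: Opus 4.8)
The plan is to avoid a direct combinatorial argument---comparing the two paths $\mu$ and $\nu$ that points of $T_{l, \mu} \cap T_{l, \nu}$ follow through the towers---by instead lifting $b$ to the crossed product and applying Lemma~\ref{L_5615_RanGmContained} twice. This is why it was worth postponing the lemma until the surjectivity statement was available.

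First I would record that $R_{l - 1}$ is exactly the range of $\gm_{l - 1}$: Corollary~\ref{C_5618_RanGmContained} gives that the range is contained in $R_{l - 1}$, and Lemma~\ref{L_5618_Surj}, applied with $l - 1$ in place of~$l$ (legitimate since $l \geq 1$, so $l - 1 \in \{ 0, 1, \ldots, m \}$), gives the reverse inclusion. Consequently, given $b \in R_{l - 1}$, there is $a \in C^* (\Z, X, h)_{\cY}$ with $\gm_{l - 1} (a) = b$.

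Now fix such an~$a$. Since $x \in T_{l, \mu}$, Lemma~\ref{L_5615_RanGmContained} applies and gives $\gm_{r_l, T_l} (a) (x) = \bt_{l, \mu} (\gm_{l - 1} (a)) (x) = \bt_{l, \mu} (b) (x)$. Since also $x \in T_{l, \nu}$, the same lemma gives $\gm_{r_l, T_l} (a) (x) = \bt_{l, \nu} (b) (x)$. Equating the two right-hand sides yields $\bt_{l, \mu} (b) (x) = \bt_{l, \nu} (b) (x)$, which is the assertion.

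I do not expect a genuine obstacle: all the substantive content is already in Lemma~\ref{L_5615_RanGmContained} and in the surjectivity of $\gm_{l-1}$. The one point that needs care is that the lifting of $b$ uses that $R_{l - 1}$ \emph{equals} the range of $\gm_{l - 1}$, not merely that the range is contained in it; that is precisely why both Corollary~\ref{C_5618_RanGmContained} and Lemma~\ref{L_5618_Surj} are cited.
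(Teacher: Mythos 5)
Your proof is correct and follows essentially the same route as the paper: lift $b$ to some $a \in C^* (\Z, X, h)_{\cY}$ via Lemma~\ref{L_5618_Surj}, then apply Lemma~\ref{L_5615_RanGmContained} once for $\mu$ and once for $\nu$ and equate. The paper's proof is exactly this, stated more tersely.
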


\begin{proof}
Use Lemma~\ref{L_5618_Surj}
to choose $a \in C^* (\Z, X, h)_{\cY}$
such that $\gm_{l - 1} (a) = b$.
Apply Lemma~\ref{L_5615_RanGmContained} twice to get
\[
\bt_{l, \mu} (b) (x)
 = \bt_{l, \mu} ( \gm_{l - 1} (a)) (x)
 = \gm_{r_l, T_l} (a) (x)
 = \bt_{l, \nu} ( \gm_{l - 1} (a)) (x)
 = \bt_{l, \nu} (b) (x).
\]
This completes the proof.
\end{proof}

\begin{lem}\label{L_5619_DfnSm}
Let the notation be as in Convention~\ref{N_5618_SuppressY}.
Let $l \in \{ 1, 2, \ldots, m \}$.
Then there is a unique unital \hm{}
$\bt_{\cY, l} \colon R_{l - 1} \to C (D_l, M_{r_l})$
(abbreviated to $\bt_l$ when $\cY$ is understood)
such that for all $\mu \in \cS_l$,
all $x \in T_{l, \mu}$,
and all $b \in R_{l - 1}$
we have $\bt_{\cY, l} (b) (x) = \bt_{l, \mu} (b) (x)$.
\end{lem}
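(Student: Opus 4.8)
The plan is to build $\bt_{\cY, l}$ by gluing together the maps $\bt_{l, \mu}$, $\mu \in \cS_l$, over the closed cover $\{T_{l, \mu}\}_{\mu \in \cS_l}$ of $D_l$, and to get continuity from a finite-closed-cover (pasting) argument. First I would note that $\cS_l$ is a \emph{finite} set: for $\mu \in \cS_l$ we have $\sum_{s = 1}^{t} r_{\mu (s)} = r_l$ with each $r_{\mu (s)} \geq 1$ (return times lie in $\N$), so the length $t$ of $\mu$ is at most $r_l$, and there are only finitely many sequences in $\{0, 1, \ldots, l - 1\}$ of length at most $r_l$. By Lemma~\ref{L_6103_Combined}(\ref{L_6103_Combined_DlUnion}) we have $D_l = \bigcup_{\mu \in \cS_l} T_{l, \mu}$, and by Lemma~\ref{L_6103_Combined}(\ref{L_6103_Combined_TlmClosed}) each $T_{l, \mu}$ is closed; thus $\{T_{l, \mu}\}_{\mu \in \cS_l}$ is a finite cover of $D_l$ by closed subsets of the compact space $X$. (If $D_l = \E$ then $C (D_l, M_{r_l})$ is the zero algebra and the assertion is trivial, so we may assume $D_l \neq \E$.)

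Next, for $b \in R_{l - 1}$ I would define $\bt_{\cY, l} (b) \colon D_l \to M_{r_l}$ by setting $\bt_{\cY, l} (b) (x) = \bt_{l, \mu} (b) (x)$ whenever $x \in T_{l, \mu}$. This is well defined precisely by Lemma~\ref{L_5619_WellDf}, which says that $\bt_{l, \mu} (b) (x) = \bt_{l, \nu} (b) (x)$ on every overlap $T_{l, \mu} \cap T_{l, \nu}$. For each fixed $\mu$, the restriction of $\bt_{\cY, l} (b)$ to $T_{l, \mu}$ is just $\bt_{l, \mu} (b)$, which is continuous since $\bt_{l, \mu}$ takes values in $C (T_{l, \mu}, M_{r_l})$. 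By the pasting lemma for finite closed covers, $\bt_{\cY, l} (b)$ is continuous on $D_l$, so $\bt_{\cY, l} (b) \in C (D_l, M_{r_l})$.

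It then remains to verify that $\bt_{\cY, l} \colon R_{l - 1} \to C (D_l, M_{r_l})$ is a unital \hm, and that it is unique with the stated property. Uniqueness is immediate: any map satisfying the requirement must agree with $\bt_{l, \mu}$ on $T_{l, \mu}$ for every $\mu \in \cS_l$, and these sets cover $D_l$. For the algebraic properties I would evaluate at an arbitrary $x \in D_l$, pick $\mu \in \cS_l$ with $x \in T_{l, \mu}$, and use that $\bt_{l, \mu}$ is a unital \hm{} on $\bigoplus_{i = 0}^{l - 1} C (T_i, M_{r_i}) \supset R_{l - 1}$ by Lemma~\ref{L_6103_Combined}(\ref{L_6103_Combined_Unital}): for $b, c \in R_{l - 1}$,
\[
\bt_{\cY, l} (b c) (x) = \bt_{l, \mu} (b c) (x) = \bt_{l, \mu} (b) (x) \, \bt_{l, \mu} (c) (x) = \bt_{\cY, l} (b) (x) \, \bt_{\cY, l} (c) (x),
\]
and in the same pointwise fashion $\bt_{\cY, l}$ is linear, adjoint-preserving, and sends the unit of $R_{l - 1}$ to the constant function $1$. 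The only step needing real attention is the continuity of $\bt_{\cY, l} (b)$, and this has already been reduced to the finiteness of $\cS_l$ together with the compatibility statement in Lemma~\ref{L_5619_WellDf}, so I do not expect a genuine obstacle here — as the remark before the statement indicates, the hard work was done in the earlier lemmas.
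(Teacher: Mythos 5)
Your proof is correct and follows the same approach as the paper: gluing the maps $\bt_{l, \mu}$ over the finite closed cover $\{T_{l, \mu}\}_{\mu \in \cS_l}$ of $D_l$, with well-definedness coming from Lemma~\ref{L_5619_WellDf} and the homomorphism/unital properties from Lemma~\ref{L_6103_Combined}(\ref{L_6103_Combined_Unital}). You helpfully make explicit the finiteness of $\cS_l$ (each $r_{\mu(s)} \geq 1$ forces length $\leq r_l$), which the paper leaves implicit but which is exactly what makes the pasting argument for continuity work.
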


\begin{proof}
That such a function exists and is unique
follows from
Lemma \ref{L_6103_Combined}(\ref{L_6103_Combined_TlmClosed}),
Lemma \ref{L_6103_Combined}(\ref{L_6103_Combined_DlUnion}),
and Lemma~\ref{L_5619_WellDf}.
% It is immediate from
% Definition~\ref{D_5501_S} that $\bt_{l, \mu}$
% is unital for all $\mu \in \cS_l$,
Lemma \ref{L_6103_Combined}(\ref{L_6103_Combined_Unital})
implies that $\bt_l$ is a \hm{} and is unital.
\end{proof}

The following theorem is the analog of a result in~\cite{HPT}
% ??? of~\cite{HPT}.
% \label{L_5330_IsoCcY}
without the assumption in~\cite{HPT}
that $X$ has enough compact open subsets.

\begin{thm}\label{T_5619_RSHA}
Let the notation be as in Convention~\ref{N_5618_SuppressY}.
Then $R_{m}$ has a recursive subhomogeneous decomposition
as in Definition~\ref{D_5619_Rshd},
with,
following Notation~\ref{D_6102_A2},
base spaces
$T_0, T_1, \ldots, T_m$
and matrix sizes
$r_0, r_1, \ldots, r_m$,
and given by
\begin{align*}
R_m
& = \bigg[ \cdots \left[ \left[
  C (T_0, M_{r_0}) \oplus_{C (D_1, M_{r_1})} C (T_1, M_{r_1}) \right]
 \oplus_{C (D_2, M_{r_2})} C (T_2, M_{r_2}) \right]
\\
& \hspace*{3em} {\mbox{}}
 \oplus_{C (D_3, M_{r_3})} \cdots \bigg]
            \oplus_{C (D_m, M_{r_m})} C (T_m, M_{r_m}),
\end{align*}
in which,
for $l = 0, 1, \ldots, m$,
the $l$-th stage algebra is $R_l$,
the map $C (T_l, M_{r_l}) \to C (D_l, M_{r_l})$
is the restriction map~$\rh_l$,
and the map
$R_{l - 1} \to C (D_l, M_{r_l})$
is the map $\bt_l$ of Lemma~\ref{L_5619_DfnSm}.
Moreover,
$\gm_m \colon C^* (\Z, X, h)_{\cY} \to R_m$
is surjective,
and is an isomorphism if $\bigcup_{n \in \Z} h^n (Y) = X$.
\end{thm}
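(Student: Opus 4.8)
The plan is to assemble the theorem from the pieces already established. First I would verify that the displayed expression really is a recursive subhomogeneous decomposition in the sense of Definition~\ref{D_5619_Rshd}. The initial algebra is $C(T_0, M_{r_0})$, which is of the required form since $T_0$ is compact by Definition~\ref{D_5428_RTY}. For the inductive step, I would check that $R_l$, as defined by~(\ref{Eq_5618_RSHD}), is exactly the pullback
\[
R_{l - 1} \oplus_{C (D_l, M_{r_l})} C (T_l, M_{r_l})
 = \big\{ (b, f) \in R_{l - 1} \oplus C (T_l, M_{r_l}) \colon
     \bt_l (b) = \rh_l (f) \big\},
\]
where $\rh_l$ is the restriction map to the closed set $D_l$ (closed because $D_l = T_l \cap X_{l - 1}$ by Lemma~\ref{L_6103_Combined}(\ref{L_6103_Combined_TlXl_1}), and $X_{l-1}$ is compact), and $\bt_l$ is the unital homomorphism furnished by Lemma~\ref{L_5619_DfnSm}. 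The equivalence of the two descriptions is the observation that the condition ``$b_l(x) = \bt_{l, \mu}(b_0, \ldots, b_{l-1})(x)$ for all $\mu \in \cS_l$ and all $x \in T_{l,\mu}$'' in~(\ref{Eq_5618_RSHD}) says precisely that $b_l$ agrees on $D_l = \bigcup_{\mu} T_{l,\mu}$ (Lemma~\ref{L_6103_Combined}(\ref{L_6103_Combined_DlUnion})) with the well-defined map $\bt_l(b_0, \ldots, b_{l-1})$ of Lemma~\ref{L_5619_DfnSm}. Then an easy induction identifies the $l$-th stage algebra of the decomposition with $R_l$, and the base spaces and matrix sizes are read off directly.

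Next I would deal with surjectivity of $\gm_m$: this is immediate from Lemma~\ref{L_5618_Surj} applied with $l = m$, since that lemma says every $b \in R_m$ is $\gm_m(a)$ for some $a \in C^*(\Z, X, h)_{\cY}$ (and Corollary~\ref{C_5618_RanGmContained} gives the reverse containment of ranges, so the image is exactly $R_m$). It remains to prove that $\gm_m$ is injective when $\bigcup_{n \in \Z} h^n(Y) = X$, and this is exactly Lemma~\ref{L_5725_NReturn}(\ref{L_5430_BasicsrhInj}). Combining surjectivity with injectivity under that hypothesis gives the isomorphism statement. So the entire proof is a matter of citing Lemma~\ref{L_5618_Surj}, Corollary~\ref{C_5618_RanGmContained}, Lemma~\ref{L_5619_DfnSm}, Lemma~\ref{L_6103_Combined}, and Lemma~\ref{L_5725_NReturn}.

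The only place requiring genuine care — and what I expect to be the main obstacle — is the bookkeeping in matching~(\ref{Eq_5618_RSHD}) to the abstract pullback and checking that the iterated-pullback notation of Notation~\ref{D_6102_A2} is faithfully reproduced, in particular that the map $R_{l-1} \to C(D_l, M_{r_l})$ in the decomposition is the single homomorphism $\bt_l$ rather than the family $\{\bt_{l,\mu}\}$; this is where Lemma~\ref{L_5619_WellDf} (consistency of the $\bt_{l,\mu}$ on overlaps $T_{l,\mu} \cap T_{l,\nu}$) and Lemma~\ref{L_6103_Combined}(\ref{L_6103_Combined_DlUnion}) do the work. Once those identifications are in place, the proof is a short assembly with no new computation.
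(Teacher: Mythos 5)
Your proposal is correct and follows essentially the same route as the paper's proof: identify $R_l$ with the pullback $R_{l-1}\oplus_{C(D_l,M_{r_l})}C(T_l,M_{r_l})$ via Lemma~\ref{L_5619_DfnSm} and Lemma~\ref{L_6103_Combined}(\ref{L_6103_Combined_DlUnion}), then invoke Lemma~\ref{L_5618_Surj} together with Corollary~\ref{C_5618_RanGmContained} for surjectivity of $\gm_m$ and Lemma~\ref{L_5725_NReturn}(\ref{L_5430_BasicsrhInj}) for injectivity under the covering hypothesis. The only difference is a mild streamlining: you read the pullback identification off directly from the definitions of $R_l$ in~(\ref{Eq_5618_RSHD}) and of $\bt_l$ in Lemma~\ref{L_5619_DfnSm}, whereas the paper's verification that $\ph_l$ lands in the pullback detours through Lemma~\ref{L_5618_Surj} and Corollary~\ref{C_5618_RanGmContained_Alt}; both are valid.
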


\begin{proof}
For the first statement,
we only need to check that,
for $l = 1, 2, \ldots, m$,
the map
\[
(b_0, b_1, \ldots, b_l)
 \mapsto \big( (b_0, b_1, \ldots, b_{l - 1}), \, b_l \big)
\]
induces an isomorphism
\[
\ph_l \colon R_{l}
 \to R_{l - 1}
    \oplus_{C (D_l, M_{r_l}), \, \bt_l, \, \rh_l} C (T_l, M_{r_l}).
\]

We show that $\ph_l$ is well defined,
that is,
that if $b = (b_0, b_1, \ldots, b_l) \in R_l$,
then $\ph_l (b)$ really is in the pullback.
Lemma~\ref{L_5618_Surj}
provides $a \in C^* (\Z, X, h)_{\cY}$
such that $\gm_l (a) = b$.
Corollary~\ref{C_5618_RanGmContained_Alt}
and Lemma~\ref{L_5619_DfnSm}
imply that
$\rh_l (b_l) = \bt_{l} (b_0, b_1, \ldots, b_{l - 1})$,
as desired.

It is obvious that $\ph_l$ is injective.

For surjectivity,
let $\big( (b_0, b_1, \ldots, b_{l - 1}), \, b_l \big)$
be in the pullback.
Then $(b_0, b_1, \ldots, b_l) \in R_l$
by the definition of~$\bt_l$ in Lemma~\ref{L_5619_DfnSm}
and the definition of~$R_l$
in Definition~\ref{D_5501_S}.

In the last statement,
the range of $\gm_m$ is contained in $R_m$
by Corollary~\ref{C_5618_RanGmContained},
and contains $R_m$ by Lemma~\ref{L_5618_Surj}.
The map $\gm_m$ is the same as the map~$\gm_{\cY}$
(see Definition~\ref{D_5428_RTY_Part2}),
so the injectivity statement follows from
Lemma~\ref{L_5725_NReturn}(\ref{L_5430_BasicsrhInj}).
\end{proof}

\section{Orbit breaking subalgebras for subshifts}\label{Sec_OBSubSf}

\indent
Let $K$ be a compact metric space,
and let $h \colon X \to X$ be a minimal subshift of $K^{\Z}$.
Suppose $Y \subset X$ has nonempty interior,
and that $Y$ is the intersection of $X$ with
a product subset of $K^{\Z}$ depending only on coordinates
in the interval $n_1, \, n_1 + 1, \, \ldots, \, n_2$ in~$\Z$.
We construct a recursive subhomogeneous algebra~$Q$
whose base spaces
are closed subspaces of finite products of copies of~$K$,
with the number of factors of $K$ equal to the
matrix size plus the length $n_2 - n_1 + 1$ of the interval,
and a \hm{} $\ph \colon Q \to C^* (\Z, X, h)_Y$
such that all elements of
$C^* (\Z, X, h)_Y$ whose coefficients only
depend on the coordinates  $n_1, \, n_1 + 1, \, \ldots, \, n_2$
are in the range of~$\ph$.

We begin by defining
what it means for a map of recursive subhomogeneous algebras
to be given by maps
on the components of decompositions.
The setup is very restrictive,
but suffices for our purposes.

\begin{dfn}\label{D_5621_MapRSHA}
As in Definition~\ref{D_5619_Rshd}
and Notation~\ref{D_6102_A2}, let
\begin{align*}
R
& = \bigg[ \cdots \left[ \left[
  C (T_0, M_{r_0}) \oplus_{C (D_1, M_{r_1})} C (T_1, M_{r_1}) \right]
 \oplus_{C (D_2, M_{r_2})} C (T_2, M_{r_2}) \right]
\\
& \hspace*{3em} {\mbox{}}
 \oplus_{C (D_3, M_{r_3})} \cdots \bigg]
            \oplus_{C (D_m, M_{r_m})} C (T_m, M_{r_m})
\end{align*}
and
\begin{align*}
Q
& = \bigg[ \cdots \left[ \left[
  C (Z_0, M_{s_0}) \oplus_{C (E_1, M_{s_1})} C (Z_1, M_{s_1}) \right]
 \oplus_{C (E_2, M_{s_2})} C (Z_2, M_{s_2}) \right]
\\
& \hspace*{3em} {\mbox{}}
 \oplus_{C (E_3, M_{s_3})} \cdots \bigg]
            \oplus_{C (E_m, M_{s_m})} C (Z_m, M_{s_m})
\end{align*}
be two recursive subhomogeneous algebras
with given recursive subhomogeneous decompositions
of the same length~$m$,
with compact Hausdorff spaces $T_l$ and $Z_l$,
matrix sizes $r_l$ and $s_l$,
and $l$-th stage algebras $R_l$ and $Q_l$
for $l = 0, 1, \ldots, m$
(so that
\[
R_0 = C (T_0, M_{r_0}),
\,\,\,\,\,\,
R_1 = C (T_0, M_{r_0}) \oplus_{C (D_1, M_{r_1})} C (T_1, M_{r_1}),
\,\,\,\,\,\,
\ldots,
\,\,\,\,\,\,
R_m = R,
\]
and similarly for $Q_0, Q_1, \ldots, Q_m, Q$),
and with closed subsets
$D_l \subset T_l$ and $E_l \subset Z_l$,
restriction maps
\[
\rh_l \colon C (T_l, M_{r_l}) \to C (D_l, M_{r_l})
\andeqn
\et_l \colon C (Z_l, M_{s_l}) \to C (E_l, M_{s_l}),
\]
and unital \hm{s}
\[
\bt_l \colon R_{l - 1} \to C (D_l, M_{r_l})
\andeqn
\af_l \colon Q_{l - 1} \to C (E_l, M_{r_l}).
\]
% for $l = 0, 1, \ldots, m$.
A unital \hm{} $\ph \colon Q \to R$
is {\emph{compatible}}
with these decompositions
if for $l = 0, 1, \ldots, m$ (for $\ph_l$)
and $l = 1, 2, \ldots, m$ (for $\dt_l$)
there are unital \hm{s}
\[
\ph_l \colon C (Z_l, M_{s_l}) \to C (T_l, M_{r_l})
\andeqn
\dt_l \colon C (E_l, M_{s_l}) \to C (D_l, M_{r_l})
\]
with the following properties.
Taking
\[
\ph^{(l)} = \bigoplus_{i = 0}^{l} \ph_i
  \colon \bigoplus_{i = 0}^{l} C (Z_i, M_{s_i})
      \longrightarrow \bigoplus_{i = 0}^{l} C (T_i, M_{r_i}),
\]
we require that
$\ph^{(l)} (Q_l) \subset R_l$,
that the diagrams
\[
\newlength{\hda}
\settoheight{\hda}{$\ph^{(l - 1)} |_{Q_{l - 1}}$}
\begin{CD}
C (Z_l, M_{s_l}) @>{\ph_l\rule{0em}{\hda}}>> C (T_l, M_{r_l})  \\
@VV{\et_l}V  @VV{\rh_l}V            \\
C (E_l, M_{s_l}) @>{\dt_l}>> C (D_l, M_{r_l})
\end{CD}
\andeqn
\begin{CD}
Q_{l - 1} @>{\ph^{(l - 1)} |_{Q_{l - 1}}}>> R_{l - 1}  \\
@VV{\af_l}V  @VV{\bt_l}V            \\
C (E_l, M_{s_l}) @>{\dt_l}>> C (D_l, M_{r_l})
\end{CD}
\]
commute,
and that $\ph^{(m)} |_Q = \ph$.
\end{dfn}

\begin{lem}\label{L_5621_ExistH}
Except for~$\ph$,
let the notation be as in Definition~\ref{D_5621_MapRSHA},
and assume that for $l = 1, 2, \ldots, m$
we have $\ph^{(l)} (Q_l) \subset R_l$
and that the diagrams in Definition~\ref{D_5621_MapRSHA} commute.
Then there is a unital \hm{} $\ph \colon Q \to R$
which is compatible with the decompositions
and such that,
with the vertical maps being the obvious inclusions,
the diagram
\[
\begin{CD}
Q @>{\ph}>> R  \\
@VVV  @VVV            \\
\bigoplus_{l = 0}^{m} C (Z_l, M_{s_l})
 @>{\ph^{(m)}}>> \bigoplus_{l = 0}^{m} C (T_l, M_{r_l})
\end{CD}
\]
commutes.
\end{lem}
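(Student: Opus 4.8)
The plan is to take $\ph$ to be simply the restriction $\ph^{(m)} |_{Q}$; no new construction is required, only the verification that this map has the stated properties. A preliminary observation is that, since every pullback occurring in the decomposition of $Q$ is formed using the \emph{unital} gluing maps $\af_l$ and the restriction maps $\et_l$, an easy induction shows that for each $l$ the stage algebra $Q_l$ contains the element $(1, 1, \ldots, 1)$ of the ambient direct sum $\bigoplus_{i = 0}^{l} C (Z_i, M_{s_i})$, and that this element is the identity of $Q_l$; the analogous statement holds for $R_l \subset \bigoplus_{i = 0}^{l} C (T_i, M_{r_i})$. Since $\ph^{(l)} = \bigoplus_{i = 0}^{l} \ph_i$ is a unital homomorphism of these ambient direct sums, it sends the identity of $\bigoplus_{i = 0}^{l} C (Z_i, M_{s_i})$ to the identity of $\bigoplus_{i = 0}^{l} C (T_i, M_{r_i})$.

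Next I would verify, by induction on $l$, that $\ph^{(l)}$ restricts to a unital homomorphism $Q_l \to R_l$. The case $l = 0$ is the hypothesis that $\ph_0 \colon C (Z_0, M_{s_0}) \to C (T_0, M_{r_0})$ is unital. For the inductive step, take $(b, c) \in Q_l \subset Q_{l - 1} \oplus C (Z_l, M_{s_l})$, so that $\af_l (b) = \et_l (c)$; by the inductive hypothesis $\ph^{(l - 1)} (b) \in R_{l - 1}$, and
\[
\bt_l \big( \ph^{(l - 1)} (b) \big)
 = \dt_l \big( \af_l (b) \big)
 = \dt_l \big( \et_l (c) \big)
 = \rh_l \big( \ph_l (c) \big),
\]
where the two outer equalities are the commutativity of the two squares in Definition~\ref{D_5621_MapRSHA} and the middle one is $\af_l (b) = \et_l (c)$. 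Hence $\ph^{(l)} \big( (b, c) \big) = \big( \ph^{(l - 1)} (b), \, \ph_l (c) \big)$ lies in $R_l$. (This re-derives the hypothesis $\ph^{(l)} (Q_l) \subset R_l$; alternatively one simply quotes it.) Together with the preliminary observation, this shows that $\ph^{(l)} |_{Q_l} \colon Q_l \to R_l$ is a unital homomorphism.

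Taking $l = m$ and setting $\ph = \ph^{(m)} |_{Q}$ then gives a unital homomorphism $Q \to R$. It is compatible with the given decompositions essentially by definition: the required component maps are the supplied $\ph_l$ and $\dt_l$, the inclusions $\ph^{(l)} (Q_l) \subset R_l$ hold for $l = 0, 1, \ldots, m$ by the previous paragraph, the two diagrams of Definition~\ref{D_5621_MapRSHA} commute by hypothesis, and $\ph^{(m)} |_{Q} = \ph$ trivially. The remaining square, in the statement of the lemma, commutes because $\ph$ was defined precisely so that composing it with the inclusion $R \hookrightarrow \bigoplus_{l = 0}^{m} C (T_l, M_{r_l})$ agrees with composing the inclusion $Q \hookrightarrow \bigoplus_{l = 0}^{m} C (Z_l, M_{s_l})$ with $\ph^{(m)}$. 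I expect no real obstacle here: the lemma is a bookkeeping statement whose content is essentially all packaged into the hypotheses of Definition~\ref{D_5621_MapRSHA}, and the only point that requires a moment's care is the unitality observation of the first paragraph (together with the fact, implicit in the setup, that the pullbacks are taken over unital maps).
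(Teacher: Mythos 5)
Your proof is correct and takes the same approach as the paper, which dispatches this lemma in one sentence: ``It is immediate to show by induction on~$l$ that $\ph^{(l)}$ defines a unital homomorphism $Q_l \to R_l$.'' You have simply spelled out the induction (the chain $\bt_l \circ \ph^{(l-1)} = \dt_l \circ \af_l$, $\af_l(b) = \et_l(c)$, $\dt_l \circ \et_l = \rh_l \circ \ph_l$) and the unitality bookkeeping that the paper leaves implicit.
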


\begin{proof}
It is immediate to show by induction on~$l$
that $\ph^{(l)}$ defines a
unital \hm{} $Q_l \to R_l$.
\end{proof}

\begin{lem}\label{L_5621_Inj}
In the situation of Lemma~\ref{L_5621_ExistH},
if $\ph_l$ is injective for $l = 0, 1, \ldots, m$
then $\ph$ is injective.
\end{lem}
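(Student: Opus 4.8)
The plan is to exhibit $\ph$ as the restriction of a homomorphism that is injective for an obvious reason. Recall from \Def{D_5621_MapRSHA} that being compatible with the decompositions forces $\ph = \ph^{(m)}|_Q$, where
\[
\ph^{(m)} = \bigoplus_{i = 0}^{m} \ph_i \colon
  \bigoplus_{i = 0}^{m} C (Z_i, M_{s_i})
  \longrightarrow \bigoplus_{i = 0}^{m} C (T_i, M_{r_i}).
\]
First I would note that, under the hypothesis that each $\ph_i$ is injective, the homomorphism $\ph^{(m)}$ is injective, since its kernel is $\bigoplus_{i = 0}^{m} \ker (\ph_i) = 0$.

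Next I would recall that, unwinding \Def{D_5619_Rshd}, the recursive subhomogeneous decomposition of $Q$ presents $Q = Q_m$ as an iterated pullback, and in particular as a C*-subalgebra of $\bigoplus_{i = 0}^{m} C (Z_i, M_{s_i})$; the corresponding inclusion is precisely the left-hand vertical arrow in the commuting square of \Lem{L_5621_ExistH}. (One checks by induction on~$l$ that $Q_l$ embeds in $\bigoplus_{i = 0}^{l} C (Z_i, M_{s_i})$, using that the pullback formed at each stage is by definition a subalgebra of the relevant finite direct sum.) Combining this with the previous paragraph, $\ph$ is the restriction of the injective homomorphism $\ph^{(m)}$ to the subalgebra $Q$, and is therefore injective.

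If one prefers to avoid passing to the ambient direct sum, the same conclusion can be reached by induction on~$l$, showing that $\ph^{(l)}|_{Q_l} \colon Q_l \to R_l$ is injective. The case $l = 0$ is just the injectivity of $\ph_0$. For the inductive step, write a general element of the pullback $Q_l = Q_{l - 1} \oplus_{C (E_l, M_{s_l})} C (Z_l, M_{s_l})$ as a pair $(b, c)$ with $b \in Q_{l - 1}$ and $c \in C (Z_l, M_{s_l})$; then $\ph^{(l)} (b, c) = \big( \ph^{(l - 1)} (b), \, \ph_l (c) \big)$, which vanishes only when $b = 0$ (by the inductive hypothesis) and $c = 0$ (by injectivity of $\ph_l$). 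Taking $l = m$ gives the lemma.

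There is no real obstacle in this argument; the only point that needs a moment's care is checking that the presentation of $Q$ as a subalgebra of $\bigoplus_{i = 0}^{m} C (Z_i, M_{s_i})$ used above matches the one implicit in \Lem{L_5621_ExistH}, so that $\ph$ genuinely coincides with $\ph^{(m)}|_Q$. This is immediate from the commuting diagram in that lemma.
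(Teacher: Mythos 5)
Your argument is correct and is essentially the paper's own proof: one observes that $\ph^{(m)}$ is injective because each $\ph_l$ is, and that $\ph$ is its restriction to the subalgebra $Q$. The paper states this in a single line; your additional inductive variant is a harmless elaboration of the same idea.
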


\begin{proof}
The map
$\ph^{(m)}
 \colon \bigoplus_{l = 0}^{m} C (Z_l, M_{s_l})
 \to \bigoplus_{l = 0}^{m} C (T_l, M_{r_l})$
is injective.
\end{proof}

We now define an approximating system at the level of spaces for a
recursive subhomogeneous decomposition
of the type used earlier.
Presumably one can make such a definition in greater generality.
In particular,
presumably the matrix sizes in the
approximating recursive subhomogeneous decomposition
need not be the same as those in the original.
However, this setup suffices for our purposes.

Keep in mind that
the set $T_{l, \mu}$ is contained in both $T_{\mu (1)}$ and $T_l$.
In the definition,
we are in several places treating it as being in $T_{\mu (1)}$.
(For example,
$T_{l, \mu, 1} = T_{l, \mu}$
and $h_{l, \mu, 1} = \id_{T_{l, \mu}}$.)

\begin{dfn}\label{D_5621_AppSys}
Let the notation be as in Convention~\ref{N_5618_SuppressY}.
(See the summary in Remark~\ref{R_5618_Summ}.)
For $l = 1, 2, \ldots, m$,
$\mu \in \cS_l$ with length~$t$,
and $s = 1, 2, \ldots, t$,
further define
$T_{l, \mu, s} \subset T_{\mu (s)}$ by
\[
T_{l, \mu, s}
 = h^{- (r_{\mu (1)} + r_{\mu (2)} + \cdots + r_{\mu (s - 1)})}
    (T_{l, \mu})
\]
and
\[
h_{l, \mu, s} = h^{r_{\mu (1)} + r_{\mu (2)} + \cdots + r_{\mu (s - 1)}}
 \colon T_{l, \mu} \to T_{l, \mu, s}.
\]
An
{\emph{approximating system}}
for~$\cY$
consists of \cms{s} $Z_0, Z_1, \ldots, Z_m$,
surjective maps $q_l \colon T_l \to Z_l$
for $l = 0, 1, \ldots, m$,
closed subsets $Z_{l, \mu} \subset Z_l$
such that $q_l^{-1} (Z_{l, \mu}) = T_{l, \mu}$
for $l = 1, 2, \ldots, m$ and $\mu \in \cS_l$,
and closed subsets $Z_{l, \mu, s} \subset Z_{\mu (s)}$
and continuous maps
$g_{l, \mu, s} \colon Z_{l, \mu} \to Z_{l, \mu, s}$
such that the diagram
\begin{equation}\label{Eq_5726_CommDiag}
\begin{CD}
T_{l, \mu}
 @>{h_{l, \mu, s}}>>
 T_{l, \mu, s}  \\
@VV{q_l}V  @VV{q_{\mu (s)}}V            \\
Z_{l, \mu} @>{g_{l, \mu, s}}>> Z_{l, \mu, s}
\end{CD}
\end{equation}
commutes,
for $l = 1, 2, \ldots, m$,
$\mu \in \cS_l$ with length~$t$,
and $s = 1, 2, \ldots, t$.
\end{dfn}

We do not require $q_{\mu (s)}^{-1} (Z_{l, \mu, s}) = T_{l, \mu, s}$.

\begin{lem}\label{L_5621_FromApp}
Let the notation be as in Definition~\ref{D_5621_AppSys},
and let $R_0, R_1, \ldots, R_m$
be as in Definition~\ref{D_5501_S}
(suppressing~$\cY$ as in
Convention~\ref{N_5618_SuppressY}).
Set $R = R_m$.
Then there are a recursive subhomogeneous algebra
\begin{align*}
Q
& = \bigg[ \cdots \left[ \left[
  C (Z_0, M_{r_0}) \oplus_{C (E_1, M_{r_1})} C (Z_1, M_{r_1}) \right]
 \oplus_{C (E_2, M_{r_2})} C (Z_2, M_{r_2}) \right]
\\
& \hspace*{3em} {\mbox{}}
 \oplus_{C (E_3, M_{r_3})} \cdots \bigg]
            \oplus_{C (E_m, M_{r_m})} C (Z_m, M_{r_m}),
\end{align*}
and a compatible \hm{} $\ph \colon Q \to R$
as in Definition~\ref{D_5621_MapRSHA}
(using the recursive subhomogeneous decomposition of~$R$
in Theorem~\ref{T_5619_RSHA})
in which,
for $l = 0, 1, 2, \ldots, m$
or $l = 1, 2, \ldots, m$ as appropriate
and
using the notation of Definition~\ref{D_5621_MapRSHA},
$E_l = \bigcup_{\mu \in \cS_l} Z_{l, \mu}$,
\[
\ph_l \colon C (Z_l, M_{r_l}) \to C (T_l, M_{r_l})
\andeqn
\dt_l \colon C (E_l, M_{r_l}) \to C (D_l, M_{r_l})
\]
are $f \mapsto f \circ q_l$,
and $\af_l \colon Q_{l - 1} \to C (E_l, M_{r_l})$
is determined by
\begin{align*}
& \af_l (c_0, c_1, \ldots, c_{l - 1}) (z)
\\
& \hspace*{3em} {\mbox{}}
  = \diag \big( ( c_{\mu (1)} \circ g_{l, \mu, 1}) (z), \,
               ( c_{\mu (2)} \circ g_{l, \mu, 2}) (z),
               \, \ldots, \,
               ( c_{\mu (t)} \circ g_{l, \mu, t}) (z) \big)
\end{align*}
for
\[
(c_0, c_1, \ldots, c_{l - 1}) \in Q_{l - 1}
 \subset \bigoplus_{i = 0}^{l - 1} C (Z_i, M_{r_i}),
\]
$\mu \in \cS_l$ with length~$t$,
and $z \in Z_{l, \mu}$.
Moreover,
$\ph$ is injective.
Also,
if
\[
a = (a_0, a_1, \ldots, a_m)
  \in R
  \subset \bigoplus_{l = 0}^m C (T_l, M_{r_l}),
\]
and if $a_l$ is in the range of $\ph_l$ for $l = 0, 1 \ldots, m$,
then $a$ is in the range of~$\ph$.
\end{lem}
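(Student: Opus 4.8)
The plan is to construct $Q$ one stage at a time, in parallel with the recursive subhomogeneous decomposition of $R$ furnished by Theorem~\ref{T_5619_RSHA}, and then to invoke Lemma~\ref{L_5621_ExistH} and Lemma~\ref{L_5621_Inj}. Two elementary facts will be used repeatedly: since $q_l$ is surjective, $f \mapsto f \circ q_l$ is an injective unital $*$-\hm{} on matrix algebras over $Z_l$; and since $q_l^{-1} (Z_{l, \mu}) = T_{l, \mu}$ with $q_l$ surjective, we have $q_l (T_{l, \mu}) = Z_{l, \mu}$, so by Lemma~\ref{L_6103_Combined}(\ref{L_6103_Combined_DlUnion}) also $q_l (D_l) = \bigcup_{\mu \in \cS_l} Z_{l, \mu} = E_l$.

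The induction will maintain, for $l = 0, 1, \ldots, m$, a recursive subhomogeneous algebra $Q_l$ with the displayed decomposition (using the maps $\af_1, \ldots, \af_l$ and the restriction maps $\et_1, \ldots, \et_l$) together with the property that $\ph^{(l)} (Q_l) \subset R_l$, where $\ph^{(l)} = \bigoplus_{i = 0}^{l} \ph_i$ and $\ph_i (f) = f \circ q_i$. The base case $l = 0$ is immediate, since $\ph^{(0)} (C (Z_0, M_{r_0})) \subset C (T_0, M_{r_0}) = R_0$. For the inductive step, given $Q_{l - 1}$ with $\ph^{(l - 1)} (Q_{l - 1}) \subset R_{l - 1}$, first I would define $E_l = \bigcup_{\mu \in \cS_l} Z_{l, \mu}$ (closed, being a finite union of closed sets), define $\dt_l (f) = f \circ (q_l|_{D_l})$ (legitimate since $q_l (D_l) \subset E_l$), and define $\af_l$ by the displayed diagonal formula. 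On each closed piece $Z_{l, \mu}$ that formula is a diagonal of the pullbacks $c_{\mu (s)} \mapsto c_{\mu (s)} \circ g_{l, \mu, s}$, hence a unital $*$-\hm{} into $C (Z_{l, \mu}, M_{r_l})$ because $\sum_{s} r_{\mu (s)} = r_l$; so $\af_l$ will be a unital $*$-\hm{} once it is seen to be well defined, which (the $Z_{l, \mu}$ forming a finite closed cover of $E_l$) reduces to agreement on overlaps. For $z \in Z_{l, \mu} \cap Z_{l, \nu}$ I would choose $x \in q_l^{-1} (z)$; then $x \in T_{l, \mu} \cap T_{l, \nu}$, and chasing the diagram~(\ref{Eq_5726_CommDiag}) identifies the $\mu$-value of the formula at $z$ with $\bt_{l, \mu} (\ph^{(l - 1)} (c_0, \ldots, c_{l - 1})) (x)$ and the $\nu$-value with $\bt_{l, \nu} (\ph^{(l - 1)} (c_0, \ldots, c_{l - 1})) (x)$, and these coincide by Lemma~\ref{L_5619_WellDf}, applicable since $\ph^{(l - 1)} (c_0, \ldots, c_{l - 1}) \in R_{l - 1}$ by the inductive hypothesis. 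The same diagram chase, combined with the description of $\bt_l$ in Lemma~\ref{L_5619_DfnSm}, shows that $\rh_l \circ \ph_l = \dt_l \circ \et_l$ and $\dt_l \circ \af_l = \bt_l \circ (\ph^{(l - 1)}|_{Q_{l - 1}})$, i.e.\ both squares of Definition~\ref{D_5621_MapRSHA} commute (the first because both composites are $f \mapsto f \circ (q_l|_{D_l})$).

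Next I would define $Q_l$ as the pullback $Q_{l - 1} \oplus_{C (E_l, M_{r_l})} C (Z_l, M_{r_l})$ along $\af_l$ and $\et_l$, which is a recursive subhomogeneous algebra of the form asserted in the statement, since $\af_l$ is a unital $*$-\hm. To close the induction I would check $\ph^{(l)} (Q_l) \subset R_l$: if $(c_0, \ldots, c_l) \in Q_l$, so that $c_l|_{E_l} = \af_l (c_0, \ldots, c_{l - 1})$, then for $x \in D_l$ we have $q_l (x) \in E_l$, whence $(c_l \circ q_l) (x) = \af_l (c_0, \ldots, c_{l - 1}) (q_l (x)) = \dt_l (\af_l (c_0, \ldots, c_{l - 1})) (x) = \bt_l (\ph^{(l - 1)} (c_0, \ldots, c_{l - 1})) (x)$ by the second square; so $\ph^{(l)} (c_0, \ldots, c_l) = (c_0 \circ q_0, \ldots, c_l \circ q_l) \in R_l$ by the definition of $R_l$ in Definition~\ref{D_5501_S}. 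Setting $Q = Q_m$, Lemma~\ref{L_5621_ExistH} produces the compatible unital \hm{} $\ph \colon Q \to R$ sitting inside $\ph^{(m)}$, and Lemma~\ref{L_5621_Inj} gives injectivity of $\ph$ since each $\ph_l$ is injective.

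For the final assertion, write $a = (a_0, \ldots, a_m) \in R$ with $a_l = \ph_l (c_l) = c_l \circ q_l$, the $c_l$ being unique by surjectivity of $q_l$, and set $c = (c_0, \ldots, c_m)$. I would show $(c_0, \ldots, c_l) \in Q_l$ for every $l$ by induction: truncating the pullback constraints defining $R = R_m$ shows $(a_0, \ldots, a_l) \in R_l$, so $a_l|_{D_l} = \bt_l (a_0, \ldots, a_{l - 1}) = \bt_l (\ph^{(l - 1)} (c_0, \ldots, c_{l - 1})) = \dt_l (\af_l (c_0, \ldots, c_{l - 1}))$ by the second square; evaluating at $x \in D_l$ and using $q_l (D_l) = E_l$ yields $c_l|_{E_l} = \af_l (c_0, \ldots, c_{l - 1})$, i.e.\ $(c_0, \ldots, c_l) \in Q_l$. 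Hence $c \in Q$, and $\ph (c) = \ph^{(m)} (c) = (c_0 \circ q_0, \ldots, c_m \circ q_m) = a$, as required. The only genuinely delicate bookkeeping will be the two diagram chases establishing the squares of Definition~\ref{D_5621_MapRSHA} from~(\ref{Eq_5726_CommDiag}) and the formula for $\bt_{l, \mu}$; everything else is the routine inductive assembly of a recursive subhomogeneous decomposition.
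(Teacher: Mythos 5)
Your proposal is correct and follows essentially the same route as the paper's proof: inductive construction of $Q_l$ as a pullback, well-definedness of $\af_l$ via the commutative square~(\ref{Eq_5726_CommDiag}) together with Lemma~\ref{L_5619_WellDf}, the same diagram chase to verify the compatibility squares of Definition~\ref{D_5621_MapRSHA}, injectivity via Lemma~\ref{L_5621_Inj}, and the same stage-by-stage argument for the final range assertion. The only cosmetic difference is that the paper first defines $Q_l$ by the explicit diagonal constraint and then identifies it with the pullback, whereas you define the pullback directly once $\af_l$ is known to be a \hm; these amount to the same thing.
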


\begin{proof}
For convenience, take $D_0 = \E$ and $E_0 = \E$.
Define \ca{s} $Q_l$ for $l = 0, 1, \ldots, m$
inductively as follows.
Set $Q_0 = C (Z_0, M_{r_0})$.
Given $Q_{l - 1}$,
take $Q_l \subset \bigoplus_{i = 0}^{l} C (Z_i, M_{r_i})$
to be the set of all
\[
(c_0, c_1, \ldots, c_{l})
 \in Q_{l - 1} \oplus C (Z_l, M_{r_l})
 \subset \bigoplus_{i = 0}^{l} C (Z_i, M_{r_i})
\]
such that for all $t \in \N$,
all $\mu \in \cS_l$ with length~$t$,
and all $z \in Z_{l, \mu}$
we have
\begin{equation}\label{Eq_6221_Star}
c_l (z) = \diag \big( ( c_{\mu (1)} \circ g_{l, \mu, 1}) (z), \,
               ( c_{\mu (2)} \circ g_{l, \mu, 2}) (z),
               \, \ldots, \,
               ( c_{\mu (t)} \circ g_{l, \mu, t}) (z) \big).
\end{equation}
For $l = 1, 2, \ldots, m$
(and, for $\ph_l$, also for $l = 0$),
define
\[
E_l \subset Z_l,
\,\,\,\,\,\,
\ph_l \colon C (Z_l, M_{r_l}) \to C (T_l, M_{r_l}),
\andeqn
\dt_l \colon C (E_l, M_{r_l}) \to C (D_l, M_{r_l})
\]
as in the statement of the lemma.
Let
\[
\ph^{(l)}
  = \bigoplus_{i = 0}^{l} \ph_i \colon
      \bigoplus_{i = 0}^{l} C (Z_i, M_{r_i})
       \to \bigoplus_{i = 0}^{l} C (T_i, M_{r_i}).
\]
Let $\et_l \colon C (Z_l, M_{r_l}) \to C (E_l, M_{r_l})$
be the restriction map.
Also for $\mu \in \cS_l$ with length~$t$, define
$\ta_{l, \mu} \colon
 \bigoplus_{i = 0}^{l - 1} C (Z_i, M_{r_i}) \to C (Z_{l, \mu}, M_{r_l})$
by
\begin{align*}
& \ta_{l, \mu} (c_0, c_1, \ldots, c_{l - 1}) (z)
\\
& \hspace*{3em} {\mbox{}}
  = \diag \big( ( c_{\mu (1)} \circ g_{l, \mu, 1}) (z), \,
               ( c_{\mu (2)} \circ g_{l, \mu, 2}) (z),
               \, \ldots, \,
               ( c_{\mu (t)} \circ g_{l, \mu, t}) (z) \big)
\end{align*}
for
\[
(c_0, c_1, \ldots, c_{l - 1})
 \in \bigoplus_{i = 0}^{l - 1} C (Z_i, M_{r_i})
\andeqn
z \in Z_{l, \mu}.
\]

Let $l \in \{ 0, 1, \ldots, m \}$.
Since $E_l = \bigcup_{\mu \in \cS_l} Z_{l, \mu}$
(by definition),
$D_l = \bigcup_{\mu \in \cS_l} T_{l, \mu}$
(by Lemma \ref{L_6103_Combined}(\ref{L_6103_Combined_DlUnion})),
and $q_l^{-1} (Z_{l, \mu}) = T_{l, \mu}$
for $\mu \in \cS_l$
(by hypothesis; see Definition~\ref{D_5621_AppSys}),
we get
\begin{equation}\label{Eq_6309_DlEl}
D_l = q_l^{-1} (E_l).
\end{equation}

We claim that the following hold for $l = 0, 1, \ldots, m$.
\begin{enumerate}
\item\label{L_5621_FromApp_Tau}
There is a unique \hm{}
$\af_l \colon Q_{l - 1} \to C (E_l, M_{r_l})$
such that for $c \in Q_{l - 1}$,
$\mu \in \cS_l$,
and $z \in Z_{l, \mu}$,
we have $\af_l (c) (z) = \ta_{l, \mu} (c) (z)$.
(This says that the formula for $\af_l$ in the statement of the lemma
makes sense.)
\item\label{L_5621_FromApp_PBack}
$Q_l = Q_{l - 1}
  \oplus_{C (E_l, M_{r_l}), \, \af_l, \, \et_l} C (Z_l, M_{r_l})$.
\item\label{L_5621_FromApp_Comm}
The diagrams
\[
\newlength{\hdb}
\settoheight{\hdb}{$\ph^{(l - 1)} |_{Q_{l - 1}}$}
\begin{CD}
C (Z_l, M_{r_l}) @>{\ph_l\rule{0em}{\hdb}}>> C (T_l, M_{r_l})  \\
@VV{\et_l}V  @VV{\rh_l}V            \\
C (E_l, M_{r_l}) @>{\dt_l}>> C (D_l, M_{r_l})
\end{CD}
\andeqn
\begin{CD}
Q_{l - 1} @>{\ph^{(l - 1)} |_{Q_{l - 1}}}>> R_{l - 1}  \\
@VV{\af_l}V  @VV{\bt_l}V            \\
C (E_l, M_{r_l}) @>{\dt_l}>> C (D_l, M_{r_l})
\end{CD}
\]
commute.
\item\label{L_5621_FromApp_Range}
$\ph^{(l)} (Q_{l}) \subset R_{l}$.
\end{enumerate}
This claim will imply the conclusion of the lemma,
by taking $\ph = \ph^{(m)} |_Q$
and putting $l = m$ in~(\ref{L_5621_FromApp_Range}).

We prove the claim by induction on~$l$.
For $l = 0$,
(\ref{L_5621_FromApp_Tau}) is vacuous,
(\ref{L_5621_FromApp_PBack}) is the definition of~$Q_0$,
(\ref{L_5621_FromApp_Comm}) is vacuous,
and (\ref{L_5621_FromApp_Range}) is immediate from $\ph^{(0)} = \ph_0$.

We now assume that the claim is known for $l - 1$ and prove it for~$l$.

We first claim that for any $\mu \in \cS_l$,
any $x \in T_{l, \mu}$,
and any~$c \in Q_{l - 1}$,
we have
\begin{equation}\label{Eq_L_5621_FromApp_Agree}
\ta_{l, \mu} (c) ( q_l (x) )
 = \bt_{l, \mu} ( \ph^{(l - 1)} (c) ) (x).
\end{equation}
To prove this claim,
let $t$ be the length of~$\mu$.
Using the formula for $\bt_{l, \mu} (b) (x)$
in Definition~\ref{D_5501_S}
and the formula for $\ta_{l, \mu} (c)$ above,
and recalling that
\[
h_{l, \mu, s}
 = h^{r_{\mu (1)} + r_{\mu (2)} + \cdots + r_{\mu (s - 1)}},
\]
we see that we must prove that
\[
(c_{\mu (s)} \circ q_{\mu (s)} \circ h_{l, \mu, s}) (x)
= (c_{\mu (s)} \circ g_{l, \mu, s} \circ q_l) (x)
\]
for $s = 1, 2, \ldots, t$.
This equation is commutativity of the diagram~(\ref{Eq_5726_CommDiag})
in Definition~\ref{D_5621_AppSys}.
The claim is thus proved.

We prove~(\ref{L_5621_FromApp_Tau}).
We have to show that if $\mu, \nu \in \cS_l$,
$c \in Q_{l - 1}$,
and $z \in Z_{l, \mu} \cap Z_{l, \nu}$,
then $\ta_{l, \mu} (c) (z) = \ta_{l, \nu} (c) (z)$.

Since $q_l$ is surjective,
there is $x \in T_l$ such that $q_l (x) = z$.
Since $q_l^{-1} (Z_{l, \mu}) = T_{l, \mu}$
and $q_l^{-1} (Z_{l, \nu}) = T_{l, \nu}$,
we have $x \in T_{l, \mu} \cap T_{l, \nu}$.

Write $c = (c_0, c_1, \ldots, c_{l - 1})$
with $c_i \in C (Z_i, M_{r_i})$ for $i = 0, 1, \ldots, l - 1$,
and set $b_i = \ph_i (c_i)$.
Thus $\ph^{(l - 1)} (c) = (b_0, b_1, \ldots, b_{l - 1})$.
Call this element~$b$.
By~(\ref{L_5621_FromApp_Range})
for $l - 1$,
we have $b \in R_{l - 1}$.
Therefore Lemma~\ref{L_5619_WellDf}
implies that
$\bt_{l, \mu} (b) (x) = \bt_{l, \nu} (b) (x)$.
Applying~(\ref{Eq_L_5621_FromApp_Agree})
twice, once with $\mu$ as given and once with $\nu$ in place of~$\mu$,
we get
\[
\ta_{l, \mu} (c) (z) = \bt_{l, \mu} (b) (x)
\andeqn
\ta_{l, \mu} (c) (z) = \bt_{l, \mu} (b) (x).
\]
So $\ta_{l, \mu} (c) (z) = \ta_{l, \mu} (c) (z)$,
as desired.
This completes the proof of~(\ref{L_5621_FromApp_Tau}).

The relation~(\ref{L_5621_FromApp_PBack})
follows from
the definition of $Q_l$
and the fact that the formula in~(\ref{L_5621_FromApp_Tau})
defines a \hm{} $\af_l \colon Q_{l - 1} \to C (E_l, M_{r_l})$,
since the formula~(\ref{Eq_6221_Star})
now just says $\af_l (c_0, c_1, \ldots, c_{l - 1}) (z) = c_l (z)$.

We next prove~(\ref{L_5621_FromApp_Comm}).
For the second diagram,
let $x \in D_l$.
Set $z = q_l (x)$.
Then $z \in E_l$ by~(\ref{Eq_6309_DlEl}).
Moreover,
there is $\mu \in \cS_l$ such that $x \in T_{l, \mu}$
and $z \in Z_{l, \mu}$.

Now let $c \in Q_{l - 1}$.
Using the definition of $\bt_l$
(in Lemma~\ref{L_5619_DfnSm}) at the first step,
using (\ref{Eq_L_5621_FromApp_Agree})
% and the definition of $\ph^{(l - 1)$
at the second step,
using the definition of $\af_{l}$ in~(\ref{L_5621_FromApp_Tau})
at the third step,
and using the definition of $\dt_l$ at the fourth step,
we get
\begin{align*}
\big( \bt_l \circ \ph^{(l - 1)} \big) (c) (x)
& = \big( \bt_{l, \mu} \circ \ph^{(l - 1)} \big) (c) (x)
\\
& = \ta_{l, \mu} (c) (z)
  = \af_l (c) (z)
  = ( \dt_l \circ \af_l ) (c) (x).
\end{align*}
This proves commutativity of the second diagram.
Commutativity of the first diagram is immediate
from the definitions of the maps.

Part~(\ref{L_5621_FromApp_Range})
follows immediately from (\ref{L_5621_FromApp_Comm}),
(\ref{L_5621_FromApp_PBack}),
and the definition of~$R_l$.

It remain to prove the last part.
For $l = 0, 1, \ldots, m$,
the map $\ph_l$ is injective because $q_l$ is surjective.
So $\ph$ is injective by Lemma~\ref{L_5621_Inj}.

Now let
\[
a = (a_0, a_1, \ldots, a_m)
  \in R
  \subset \bigoplus_{l = 0}^m C (T_l, M_{r_l}),
\]
and assume that for $l = 0, 1, \ldots, m$
there is $c_l \in C (Z, M_{r_l})$
such that $\ph_l (c_l) = a_l$.
It suffices to prove that
$(c_0, c_1, \ldots, c_m) \in Q$.
We prove by induction on $l$ that
$(c_0, c_1, \ldots, c_l) \in Q_l$.
For $l = 0$ this is trivial.
So suppose it is known for $l - 1$;
we prove it for~$l$.
Let $z \in E_l$;
we must show that
\[
\af_l (c_0, c_1, \ldots, c_{l - 1}) (z) = \et_l (c_l) (z).
\]

Since $q_l$ is surjective,
there is $x \in T_l$ such that $q_l (x) = z$.
Then $x \in D_l$ by~(\ref{Eq_6309_DlEl}).
Since $a \in R$,
we therefore have
$\bt_l (a_0, a_1, \ldots, a_l) (x) = \rh_l (a_l) (x)$.
Using this at the third step,
commutativity of the right square in~(\ref{L_5621_FromApp_PBack})
at the second step,
and commutativity of the left square in~(\ref{L_5621_FromApp_PBack})
at the fourth step,
we get
\begin{align*}
\af_l (c_0, c_1, \ldots, c_{l - 1}) (z)
& = (\dt_l \circ \af_l) (c_0, c_1, \ldots, c_{l - 1}) (x)
  = \bt_l (a_0, a_1, \ldots, a_l) (x)
\\
& = \rh_l (a_l) (x)
  = (\dt_l \circ \et_l) (c_l) (x)
  = \et_l (c_l) (z),
\end{align*}
as desired.
This completes the induction,
and the proof.
\end{proof}

\begin{lem}\label{L_5612_ExpY_0}
Let $X_0$ be a \chs,
let $X \subset X_0$ be closed,
let $Y \subset X$ be closed,
and let $z \in \sint_{X} (Y)$
(the interior of $Y$ with respect to~$X$).
Then there exists a closed subset $Y_0 \subset X_0$
such that $z \in \sint (Y_0) \cap X$
and $Y_0 \cap X \subset \sint_{X} (Y)$.
\end{lem}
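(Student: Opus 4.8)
The plan is to use the fact that $\sint_X (Y)$ is relatively open in $X$ together with regularity of the compact Hausdorff space $X_0$, and to take for $Y_0$ the closure of a sufficiently small open neighborhood of~$z$. First I would choose an open set $W \subset X_0$ with $\sint_X (Y) = W \cap X$; this is possible precisely because $\sint_X (Y)$ is open in the subspace~$X$. Since $z \in \sint_X (Y)$, we have $z \in W$, and also $z \in X$ (because $\sint_X (Y) \subset Y \subset X$), so the conclusion $z \in \sint (Y_0) \cap X$ will reduce to $z \in \sint (Y_0)$.

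Next I would observe that $X \SM W = X \cap (X_0 \SM W)$ is an intersection of two sets closed in~$X_0$ — here using that $X$ is closed in $X_0$ and $W$ is open in $X_0$ — hence $X \SM W$ is closed in $X_0$, and it does not contain~$z$. Then I would invoke regularity of $X_0$ (valid since a \chs{} space is normal, hence regular): applied to the point $z$ and the closed set $X \SM W$, it yields an open set $U \subset X_0$ with $z \in U$ and $\ov{U} \cap (X \SM W) = \E$. Set $Y_0 = \ov{U}$. Then $Y_0$ is closed in $X_0$; since $U$ is open and $U \subset Y_0$ we have $U \subset \sint (Y_0)$, so $z \in \sint (Y_0)$, giving $z \in \sint (Y_0) \cap X$. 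Finally $Y_0 \cap X = \ov{U} \cap X \subset \big( X_0 \SM (X \SM W) \big) \cap X = W \cap X = \sint_X (Y)$, which is the remaining assertion. (In the degenerate case $X \SM W = \E$ one simply takes $Y_0 = X_0$.)

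I do not expect a genuine obstacle here; the only points that need a moment of care are the verification that $X \SM W$ is closed in all of $X_0$ rather than merely in~$X$ — which is exactly where closedness of $X$ in $X_0$ is used — and recognizing the separating-neighborhood property as precisely regularity of a compact Hausdorff space. If one prefers to avoid citing regularity directly, one can instead apply Urysohn's lemma in the normal space $X_0$ to the disjoint closed sets $\{ z \}$ and $X \SM W$, obtaining a continuous $f \colon X_0 \to [0,1]$ with $f (z) = 0$ and $f \equiv 1$ on $X \SM W$, and take $Y_0 = f^{-1}([0, 1/2])$; then $Y_0$ is closed, $z \in f^{-1}([0, 1/2)) \subset \sint (Y_0)$, and $Y_0 \cap (X \SM W) = \E$ forces $Y_0 \cap X \subset W \cap X = \sint_X (Y)$, so the same three verifications go through verbatim.
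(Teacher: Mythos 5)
Your proof is correct and follows essentially the same strategy as the paper's: realize $\sint_X(Y)$ as $W \cap X$ for an open $W \subset X_0$, then use regularity of the compact Hausdorff space $X_0$ to produce a closed neighborhood $Y_0$ of $z$ inside the right region. The only cosmetic difference is that you separate $z$ from the closed set $X \SM W$, whereas the paper directly chooses $V$ with $z \in V \subset \ov{V} \subset W$ and sets $Y_0 = \ov{V}$; both are instances of the same regularity argument and both verifications go through.
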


\begin{proof}
Choose an open set $U \subset X_0$ such that $U \cap X_0 = \sint_{X} (Y)$.
Choose an open set $V \subset X_0$
such that $z \in V \subset {\ov{V}} \subset U$.
Set $Y_0 = {\ov{V}}$.
Then $z \in V \cap X \subset \sint (Y_0) \cap X$
and
\[
Y_0 \cap X = {\ov{V}} \cap X \subset U \cap X = \sint_{X} (Y).
\]
This completes the proof.
\end{proof}

We now introduce convenient notation related to
shifts and subshifts.

\begin{dfn}\label{D_5611_Shift}
Let $K$ be a set.
The {\emph{shift}} $h_K \colon K^{\Z} \to K^{\Z}$
is the bijection
given by
$h_K (x)_k = x_{k + 1}$
for $x = (x_k)_{k \in \Z} \in K^{\Z}$
and $k \in \Z$.
% We write $h$ when $K$ is understood.
\end{dfn}

\begin{rmk}\label{R_6227_If}
If $K$ is a topological space,
then $h_K$ is a \hme.
It is the backwards shift,
satisfying
\[
h_K ( \ldots, x_{-2}, x_{-1}, x_0, x_1, x_2, \ldots)
 = ( \ldots, x_{-1}, x_0, x_1, x_2, x_3, \ldots).
\]
This choice is consistent with~\cite{Lnd}.
(See page 237 there.)
With this choice,
if $h \colon X \to X$ is a bijection,
and $f \colon X \to K$ is any function,
then the map $I_f \colon X \to K^{\Z}$ given by
\[
x \mapsto
 \big( \ldots, \, f (h^{-2} (x)), \, f (h^{-1} (x)), \, f (x),
  \, f (h (x)), \, f (h^2 (x)), \, \ldots \big)
\]
is equivariant from $(X, h)$ to $(K^{\Z}, h_K)$.
If $h$ and $f$ are \ct,
so is $I_f$.
\end{rmk}

\begin{cnv}\label{Cv_5619_Subshift}
% We adopt the following conventions and notation
% for minimal subshifts.
Let $K$ be a fixed \cms.
We set $X_0 = K^{\Z}$.
We let $h_0 = h_K \colon X_0 \to X_0$
be the backwards shift,
as in Definition~\ref{D_5611_Shift}.
% given by $h_0 (x)_k = x_{k + 1}$
% for $x = (x_k)_{k \in \Z} \in X$
% and $k \in \Z$.
We let $X \subset X_0$ be an
infinite closed subset such that $h_0 (X) = X$
and which is minimal for~$h_0$,
and we let $h \colon X \to X$
be the restriction of $h_0$ to~$X$.
We define $\af_0 \colon C (X_0) \to C (X_0)$
by $\af_0 (f) = f \circ h_0^{-1}$ for $f \in C (X_0)$,
and we define $\af \colon C (X) \to C (X)$
by $\af (f) = f \circ h^{-1}$ for $f \in C (X)$.

For any subset $I \subset \Z$,
we let $p_{I} \colon X_0 \to K^I$
be the obvious projection map,
which for $x = (x_k)_{k \in \Z} \in X_0$
forgets the coordinates $x_k$ for $k \not\in I$.
For any subsets $I, J \subset \Z$ and any function
$\om \colon I \to J$,
we define $\om^* \colon K^J \to K^I$
by $\om^* (x)_k = x_{\om (k)}$
for $k \in I$ and $x = (x_k)_{k \in J} \in K^J$.
Thus,
if we define $\sm \colon \Z \to \Z$ by $\sm (k) = k + 1$
for $k \in \Z$,
then $h_0 = \sm^*$,
and if for $I \subset \Z$ we let $\io_I \colon I \to \Z$
be the inclusion,
then $p_I = \io_I^*$.

For $I \subset \Z$ and $f \in C (X)$,
we say that $f$ is {\emph{constant in coordinates not in~$I$}}
if whenever $x, y \in X$ satisfy $x_k = y_k$ for all $k \in I$,
then $f (x) = f (y)$.
Equivalently,
there is a \cfn{} $g \colon p_I (X) \to \C$
such that $f = g \circ p_I$.
If $a \in C^* (\Z, X, h)$,
we say that $a$ is {\emph{constant in coordinates not in~$I$}}
if in the formal series
$\sum_{n = - \I}^{\I} a_n u^n$ for~$a$,
the function $a_n \in C (X)$
is constant in coordinates not in~$I$ for all $n \in \Z$.
\end{cnv}

\begin{prp}\label{L_5624_RectSet}
Let the notation be as in Convention~\ref{Cv_5619_Subshift}.
Let $n_1, n_2 \in \Z$ with $n_1 \leq n_2$.
For $k \in [n_1, n_2] \cap \Z$
let $C_k \subset K$ be a closed subset
such that $\sint (C_k) \neq \E$,
and define
\[
C = \prod_{k = n_1}^{n_2} C_k
  \subset K^{[n_1, n_2] \cap \Z}.
\]
Let $Y \subset X$ be the closed subset $Y = p_I^{-1} (C) \cap X$,
and assume that $\sint_X (Y) \neq \E$.
Let $\cY$ be the Rokhlin system
associated to~$Y$ as in Lemma~\ref{L_6224_FullTowers}.
Then there are a recursive subhomogeneous algebra
\begin{align*}
Q
& = \bigg[ \cdots \left[ \left[
  C (Z_0, M_{r_0}) \oplus_{C (E_1, M_{r_1})} C (Z_1, M_{r_1}) \right]
 \oplus_{C (E_2, M_{r_2})} C (Z_2, M_{r_2}) \right]
\\
& \hspace*{3em} {\mbox{}}
 \oplus_{C (E_3, M_{r_3})} \cdots \bigg]
            \oplus_{C (E_m, M_{r_m})} C (Z_m, M_{r_m})
\end{align*}
in which,
for $l = 0, 1, \ldots, m$,
the space $Z_l$ is a closed subset of $K^{[n_1, \, n_2 + r_l] \cap \Z}$,
and an injective unital \hm{}
$\ph \colon Q \to C^* (\Z, X, h)_{\cY}$
such that whenever
$a \in C^* (\Z, X, h)_{\cY}$
is constant in coordinates not in $[n_1, n_2] \cap \Z$,
then there is $b \in Q$ such that $\ph (b) = a$.
\end{prp}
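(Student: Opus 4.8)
The plan is to manufacture an approximating system for~$\cY$ in the sense of Definition~\ref{D_5621_AppSys} and then feed it into Lemma~\ref{L_5621_FromApp}, which already packages all the recursive-subhomogeneous bookkeeping. Write $I = [n_1, n_2] \cap \Z$ and, for $l = 0, 1, \ldots, m$, set $I_l = [n_1, \, n_2 + r_l] \cap \Z$, a finite set. I would take $q_l \colon T_l \to Z_l$ to be the restriction to~$T_l$ of the coordinate projection $p_{I_l} \colon X_0 \to K^{I_l}$, and $Z_l = p_{I_l}(T_l)$. Since $T_l$ is compact, $Z_l$ is a closed subset of the compact metric space $K^{I_l} = K^{[n_1, \, n_2 + r_l] \cap \Z}$, and $q_l$ is a continuous surjection, hence a closed map and a quotient map.

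The combinatorial heart of the matter is that $Y = p_I^{-1}(C) \cap X$ and $T_j = \{ y \in Y \colon h^{r_j}(y) \in Y \}$ (Lemma~\ref{L_6224_FullTowers}), so for a point already known to lie in~$X$, membership in $T_j$ is equivalent to the coordinate conditions $y_p \in C_p$ for $p \in I$ together with $y_p \in C_{p - r_j}$ for $p \in [n_1 + r_j, \, n_2 + r_j] \cap \Z$. Unwinding the definition of $T_{l, \mu}$ in Definition~\ref{D_5501_S}: for $x \in T_l$ and $\mu \in \cS_l$ of length~$t$, with partial sums $p_s = r_{\mu(1)} + \cdots + r_{\mu(s)}$ (so $p_0 = 0$, $p_t = r_l$), the condition $x \in T_{l, \mu}$ is a finite list of conditions $h^{p_{s - 1}}(x) \in T_{\mu(s)}$; each of these only constrains coordinates $x_p$ with $p$ in $[n_1 + p_{s-1}, \, n_2 + p_{s-1}] \cup [n_1 + p_s, \, n_2 + p_s] \subset I_l$ (here $h^{p_{s-1}}(x) \in X$ is automatic, and $p_t = r_l$ is exactly what stops the upper end at $n_2 + r_l$). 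Hence $T_{l, \mu}$ is a union of fibres of~$q_l$, so $Z_{l, \mu} := q_l(T_{l, \mu})$ is closed in~$Z_l$ and $q_l^{-1}(Z_{l, \mu}) = T_{l, \mu}$. For the horizontal maps of~(\ref{Eq_5726_CommDiag}): with $h_{l, \mu, s} = h^{p_{s - 1}}$ as in Definition~\ref{D_5621_AppSys}, the composite $q_{\mu(s)} \circ h_{l, \mu, s}$ sends $x \in T_{l, \mu}$ to $(x_{p + p_{s - 1}})_{p \in I_{\mu(s)}}$, and since $\{ p + p_{s - 1} \colon p \in I_{\mu(s)} \} \subset [n_1 + p_{s-1}, \, n_2 + p_s] \cap \Z \subset I_l$, this factors through~$q_l$, defining a continuous coordinate-reindexing map $g_{l, \mu, s} \colon Z_{l, \mu} \to K^{I_{\mu(s)}}$; set $Z_{l, \mu, s} = g_{l, \mu, s}(Z_{l, \mu}) = q_{\mu(s)}(T_{l, \mu, s}) \subset Z_{\mu(s)}$, which is closed, and~(\ref{Eq_5726_CommDiag}) commutes by construction.

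Now Lemma~\ref{L_5621_FromApp} produces the recursive subhomogeneous algebra~$Q$ with base spaces $Z_l \subset K^{[n_1, \, n_2 + r_l] \cap \Z}$ and matrix sizes~$r_l$, together with an injective unital \hm{} into $R_m$. Because $\bigcup_{n \in \Z} h^n(Y) = X$ by Lemma~\ref{L_6224_FullTowers}(\ref{L_6224_FullTowers_37}), Theorem~\ref{T_5619_RSHA} says $\gm_{\cY} \colon C^*(\Z, X, h)_{\cY} \to R_m$ is an isomorphism, so composing with $\gm_{\cY}^{-1}$ gives the desired injective unital $\ph \colon Q \to C^*(\Z, X, h)_{\cY}$. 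For the last assertion, let $a \in C^*(\Z, X, h)_{\cY}$ be constant in coordinates not in~$I$, with formal series $\sum_n a_n u^n$. By~(\ref{Eq_5615_ByEntry}) the entries of $\gm_{r_l, T_l}(a)(x)$ are $a_{j - k}(h^j(x))$ with $0 \le k \le j \le r_l - 1$, and since each $a_{j - k}$ depends only on coordinates in~$I$, these depend only on $x_p$ for $p \in [n_1, \, n_2 + r_l - 1] \cap \Z \subset I_l$; thus $\gm_{r_l, T_l}(a)$ is constant on the fibres of the quotient map~$q_l$ and descends to some $c_l \in C(Z_l, M_{r_l})$ with $\ph_l(c_l) = c_l \circ q_l = \gm_{r_l, T_l}(a)$. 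As $\gm_{\cY}(a) = (\gm_{r_0, T_0}(a), \ldots, \gm_{r_m, T_m}(a)) \in R_m$ has each coordinate in the range of the corresponding $\ph_l$, the final part of Lemma~\ref{L_5621_FromApp} yields $b \in Q$ with $\ph(b) = a$.

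The main obstacle is the coordinate bookkeeping of the second paragraph: one has to verify, directly from the descriptions of $Y$, of the $T_j$ in Lemma~\ref{L_6224_FullTowers}, and of $T_{l, \mu}$ and $h_{l, \mu, s}$ in Definition~\ref{D_5501_S} and Definition~\ref{D_5621_AppSys}, that every constraint and every structure map stays inside the window $[n_1, \, n_2 + r_l] \cap \Z$, the identity $\sum_{s = 1}^{t} r_{\mu(s)} = r_l$ being precisely what prevents the upper end from drifting. Everything else---compactness and metrizability of the $Z_l$, closedness of the $Z_{l, \mu}$ and $Z_{l, \mu, s}$, continuity of the $g_{l, \mu, s}$, and commutativity of~(\ref{Eq_5726_CommDiag})---is routine once this window is pinned down.
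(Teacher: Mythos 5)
Your proof is correct and follows essentially the same route as the paper's: you build an approximating system in the sense of Definition~\ref{D_5621_AppSys} with $Z_l = p_{[n_1,\,n_2+r_l]\cap\Z}(T_l)$ and coordinate-reindexing maps $g_{l,\mu,s}$, verify the window bookkeeping using $\sum_{s} r_{\mu(s)} = r_l$ and the explicit form of $T_l = Y \cap h^{-r_l}(Y)$ from Lemma~\ref{L_6224_FullTowers}, feed this into Lemma~\ref{L_5621_FromApp}, and compose with $\gm_{\cY}^{-1}$ via Theorem~\ref{T_5619_RSHA}. The only cosmetic difference is in the last step: the paper first decomposes an arbitrary $a \in C^*(\Z,X,h)_{\cY}$ as a finite sum $\sum_n f_n u^n$ (via Proposition~7.5 of~\cite{PhLg}) and handles each $f_n u^n$ individually, whereas you apply formula~(\ref{Eq_5615_ByEntry}) to $a$ directly; both are valid since each matrix entry $a_{j-k}(h^j(x))$ depends only on the coordinates of~$x$ in $[n_1+j,\,n_2+j] \subset [n_1,\,n_2+r_l]$.
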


\begin{proof}
To simplify the notation,
we use the usual interval notation for subsets of~$\Z$
instead of subsets of~$\R$.
Thus, $[a, b]$ stands for $[a, b] \cap \Z$,
$[a, b)$ stands for $[a, b) \cap \Z$,
etc.
% Also,
% for $p \in \Z$ we define
% $C (p) \subset K^{[n_1 + p, \, n_2 + p]}$
% to be the obvious coordinate shift of~$C$.
% That is,
% letting $\sm_p \colon [n_1 + p, \, n_2 + p] \to [n_1, n_2]$
% be $\sm (k) = k - p$,
% then
% \[
% C (p)
%  = \sm_p^* (C)
%  = \prod_{k = n_1 + p}^{n_2 + p} C_{k - p}.
% \]
We simplify the notation for $\cY$ and objects associated with it
as in Convention~\ref{N_5618_SuppressY}.
(See the summary in Remark~\ref{R_5618_Summ}.)
Finally,
for $l = 0, 1, \ldots, m$,
for $\mu \in \cS_l$ with length~$t$,
and for $s = 0, 1, 2, \ldots, t$,
we define
\begin{equation}\label{Eq_6224_plmus}
p_{l, \mu, s} = r_{\mu (1)} + r_{\mu (2)} + \cdots + r_{\mu (s)},
\end{equation}
so that
\begin{equation}\label{Eq_6224_Tlmu}
T_{l, \mu}
 = T_l \cap \bigcap_{s = 1}^t h^{- p_{l, \mu, s - 1}} (T_{\mu (s)}).
\end{equation}

We will now construct an approximating system for~$\cY$
as in Definition~\ref{D_5621_AppSys}.

For $l = 0, 1, \ldots, m$
define $Z_l = p_{[n_1, \, n_2 + r_l]} (T_l) \subset K^{[n_1, \, n_2 + r_l]}$,
and set
\[
q_l = p_{[n_1, \, n_2 + r_l]} |_{T_l}
  \colon T_l \to Z_l.
\]
For $\mu \in \cS_l$,
set $Z_{l, \mu} = q_l (T_{l, \mu}) \subset Z_l$.

By definition
(see Lemma~\ref{L_6224_FullTowers}),
\begin{equation}\label{Eq_6224_Tl}
T_l = Y \cap h^{-r_l} (Y),
\end{equation}
so
\begin{align*}
& T_l =
\big\{ x \in X
 \colon {\mbox{$x_k \in C_k$
    for $k = n_1, \, n_1 + 1, \, \ldots, \, n_2$}}
\\
& \hspace*{5em} {\mbox{}}
    {\mbox{and $x_{k} \in C_{k - r_l}$
    for $k = n_1 + r_l, \, n_1 + r_l + 1, \, \ldots, \, n_2 + r_l$}}
     \big\}.
\end{align*}
Therefore
$Z_l \subset K^{[n_1, \, n_2 + r_l]}$
is given by
\begin{align}\label{Eq_6224_IdZl}
& Z_l =
\big\{ x \in p_{[n_1, \, n_2 + r_l]} (X)
 \colon {\mbox{$x_k \in C_k$
    for $k = n_1, \, n_1 + 1, \, \ldots, \, n_2$}}
\\
& \hspace*{5em} {\mbox{}}
    {\mbox{and $x_{k} \in C_{k - r_l}$
    for $k = n_1 + r_l, \, n_1 + r_l + 1, \, \ldots, \, n_2 + r_l$}}
     \big\}.
\notag
\end{align}
It is easily seen from these formulas that
$q_l^{-1} (Z_l) = T_l$.

Let $\mu \in \cS_l$
and let $t$ be the length of~$\mu$.
Combining (\ref{Eq_6224_Tl}), (\ref{Eq_6224_Tlmu}),
(\ref{Eq_6224_plmus}),
and $r_{\mu (1)} + r_{\mu (2)} + \cdots + r_{\mu (t)} = r_l$,
we get
\begin{align*}
T_{l, \mu}
& = Y \cap h^{-r_l} (Y) \cap
   \bigcap_{s = 1}^t h^{- p_{l, \mu, s - 1}}
         \big( Y \cap h^{- r_{\mu (s)}} (Y) \big)
\\
& = h^{- p_{l, \mu, 0}} (Y) \cap h^{- p_{l, \mu, 1}} (Y) \cap
    \cdots \cap h^{- p_{l, \mu, t}} (Y).
\end{align*}
Substituting the definition of~$Y$ in this formula,
we get
\begin{align*}
& T_{l, \mu} =
\big\{ x \in X
 \colon {\mbox{$x_{k} \in C_{k - p_{l, \mu, s}}$
    for $s = 0, 1, \ldots, t$}}
\\
& \hspace*{5em} {\mbox{}}
    {\mbox{and
    $k = n_1 + p_{l, \mu, s}, \, n_1 + p_{l, \mu, s} + 1,
     \, \ldots, \, n_2 + p_{l, \mu, s}$}}
     \big\}.
\end{align*}
Since for $s = 0, 1, \ldots, t$
we have
\[
0 = p_{l, \mu, 0}
  \leq p_{l, \mu, s}
  \leq p_{l, \mu, t}
  = r_l,
\]
all coordinates used in this formula
are in $[n_1, \, n_2 + r_l]$.
Therefore
\begin{align}\label{Eq_6224_IdZlmu}
& Z_{l, \mu} =
\big\{ x \in p_{[n_1, \, n_2 + r_l]} (X)
 \colon {\mbox{$x_{k} \in C_{k - p_{l, \mu, s}}$
    for $s = 0, 1, \ldots, t$}}
\\
& \hspace*{5em} {\mbox{}}
    {\mbox{and
    $k = n_1 + p_{l, \mu, s}, \, n_1 + p_{l, \mu, s} + 1,
     \, \ldots, \, n_2 + p_{l, \mu, s}$}}
     \big\}.
\notag
\end{align}
It is now easily seen that
$q_l^{-1} (Z_{l, \mu}) = T_{l, \mu}$.

For $s = 1, 2, \ldots, t$,
define
\[
\sm_{l, \mu, s} \colon [n_1, \, n_2 + r_{\mu (s)}]
   \to [n_1, \, n_2 + r_l]
\]
by
$\sm_{l, \mu, s} (k) = k + p_{l, \mu, s - 1}$
for $k \in [n_1, \, n_2 + r_{\mu (s)}]$.
Then define
$g_{l, \mu, s}^{(0)} \colon
  Z_{l, \mu} \to K^{[n_1, \, n_2 + r_{\mu (s)}]}$
by $g_{l, \mu, s}^{(0)} = (\sm_{l, \mu, s})^* |_{Z_{l, \mu}}$.
Set
$Z_{l, \mu, s} = g_{l, \mu, s}^{(0)} (Z_{l, \mu}) \subset Z_{\mu (s)}$,
and let $g_{l, \mu, s} \colon Z_{l, \mu} \to Z_{l, \mu, s}$
be the corestriction of $g_{l, \mu, s}^{(0)}$.

We claim that $Z_{l, \mu, s} \subset Z_{\mu (s)}$.
Let $x \in Z_{l, \mu}$;
we show $(\sm_{l, \mu, s})^* (x) \in Z_{\mu (s)}$,
by using~(\ref{Eq_6224_IdZlmu}).
First, let $k \in [n_1, n_2]$.
Then
\[
(\sm_{l, \mu, s})^* (x)_k
 = x_{k + p_{l, \mu, s - 1}},
\]
which is in $C_k$ by using $s - 1$ in place of~$s$
in~(\ref{Eq_6224_IdZlmu}).
Next,
let $k \in [n_1 + r_{\mu (s)}, \, n_2 + r_{\mu (s)}]$.
Set $j = k - r_{\mu (s)}$,
so that $j \in [n_1, n_2]$
and $k + p_{l, \mu, s - 1} = j + p_{l, \mu, s}$.
Then, using~(\ref{Eq_6224_IdZlmu}) at the second last step,
\[
(\sm_{l, \mu, s})^* (x)_{k}
 = x_{k + p_{l, \mu, s - 1}}
 = x_{j + p_{l, \mu, s}}
 \in C_j
 = C_{k - r_{\mu (s)}}.
\]
The claim is proved.

It is clear from the definitions that the diagram
\[
\begin{CD}
K^{\Z}
 @>{h^{p_{l, \mu, s - 1}}}>>
 K^{\Z}  \\
@VV{p_{[n_1, \, n_2 + r_l]}}V  @VV{p_{[n_1, \, n_2 + r_{\mu (s)}]}}V
            \\
K^{[n_1, \, n_2 + r_l]}
   @>{(\sm_{l, \mu, s})^*}>> K^{[n_1, \, n_2 + r_{\mu (s)}]}
\end{CD}
\]
commutes.
It follows that the diagram~(\ref{Eq_5726_CommDiag})
in Definition~\ref{D_5621_AppSys} commutes.
We have now verified that we have an approximating system
for $\cY$
as in Definition~\ref{D_5621_AppSys}.

Let $R$ be the algebra $R_m$ of Theorem~\ref{T_5619_RSHA},
and let $\gm \colon C^* (\Z, X, h)_{\cY} \to R$
be the isomorphism $\gm_m$ from Theorem~\ref{T_5619_RSHA}.
Let
\begin{align*}
Q
& = \bigg[ \cdots \left[ \left[
  C (Z_0, M_{r_0}) \oplus_{C (E_1, M_{r_1})} C (Z_1, M_{r_1}) \right]
 \oplus_{C (E_2, M_{r_2})} C (Z_2, M_{r_2}) \right]
\\
& \hspace*{3em} {\mbox{}}
 \oplus_{C (E_3, M_{r_3})} \cdots \bigg]
            \oplus_{C (E_m, M_{r_m})} C (Z_m, M_{r_m})
\end{align*}
and $\ph \colon Q \to R$
be obtained by
applying
Lemma~\ref{L_5621_FromApp} to the approximating system
we have just constructed.

Let $f \in C (X)$,
let $n \in \Z$,
and suppose that $f$ is constant on coordinates not in $[n_1, n_2]$
and that $f u^n \in C^* (\Z, X, h)_{\cY}$.
We claim that there is $b \in Q$
such that $\ph (b) = \gm (f u^n)$.
First assume that $n \geq 0$.
Then $f$ vanishes on
$\bigcup_{j = 0}^{n - 1} h^j (Y)$ by Proposition~\ref{P_5429_CharOB}.
Let $l \in \{ 0, 1, \ldots, m \}$.
As in Proposition~\ref{L_5429_FromAY},
$\gm (f u^n)_l$
has possibly nonzero matrix entries
$f \circ h^j |_{T_l}$
only for $j \in \{ n, \, n + 1, \, \ldots, \, r_l - 1 \}$
(no nonzero matrix entries if this set is empty).
Since $f$ is constant on coordinates not in $[n_1, n_2]$,
it follows that
$f \circ h^j$ is constant on coordinates not in
% $\{ n_1 + j, \, n_1 + j + 1, \, \ldots, \, n_2 + j \}$.
$[ n_1 + j, \, n_2 + j ]$.
This set is always contained in $[n_1, \, n_2 + r_l]$,
so there is $c_l \in C (Z_l, M_{r_l})$ such that
$\gm (f u^n)_l = c_l \circ p_{[n_1, \, n_2 + r_l]}$.
Since this is true for all $l \in \{ 0, 1, \ldots, m \}$,
Lemma~\ref{L_5621_FromApp}
now implies that $\gm (f u^n)$ is in the range of~$\ph$.

Now assume $n < 0$.
Let $l \in \{ 0, 1, \ldots, m \}$.
As in Proposition~\ref{L_5429_FromAY},
$\gm (f u^n)_l$
has possibly nonzero matrix entries
$f \circ h^j |_{T_l}$
only for $j \in \{ 0, 1, \ldots, r_l + n - 1 \}$
(no nonzero matrix entries if this set is empty).
As before,
$f \circ h^j$ is constant on coordinates not in
$[ n_1 + j, \, n_2 + j ]$.
% $\{ n_1 + j, \, n_1 + j + 1, \, \ldots, \, n_2 + j \}$.
Since $r_l + n - 1 < r_l$ (recalling that $n < 0$),
this set is also
contained in $[n_1, \, n_2 + r_l]$ for all allowed
values of~$j$,
and we continue as before.
The claim is proved.

According to Proposition~7.5 of~\cite{PhLg},
if $a \in C^* (\Z, X, h)_{\cY}$ is arbitrary,
then there are $N \in \N$
and $f_{-N}, \, f_{- N + 1}, \, \ldots, \, f_N \in C (X)$
such that $a = \sum_{n = - N}^N f_n u^n$
and $f_n u^n \in C^* (\Z, X, h)_{\cY}$
for all~$n$.
If $a$ is constant on coordinates not in
$[ n_1, n_2 ]$,
then,
by definition, $f_n$ is constant on coordinates not in $[n_1, n_2]$.
By the claim,
$f_n u^n$ is in the range of~$\ph$.
Since this is true for all~$n$,
it follows that $a$ is in the range of~$\ph$.
\end{proof}

\section{Approximation for subshifts}\label{Sec_Approx}

\indent
In this section,
we take the space $K$ of Section~\ref{Sec_OBSubSf}
to be finite dimensional.
We can then bound the radius of comparison
of the recursive subhomogeneous algebra~$Q$
in Proposition~\ref{L_5624_RectSet}.
Combining this result with an equivariant embedding theorem
of a \mh{} in a suitable shift
(from~\cite{Lnd}),
we prove our main theorem.

\begin{lem}\label{L_5627_FcnAppr}
Let the notation be as in Convention~\ref{Cv_5619_Subshift}.
Let $m \in \N$,
let $f_1, f_2, \ldots, f_m \in C (X)$,
let $B_1, B_2, \ldots, B_m \subset X$ be closed subsets,
and let $\ep > 0$.
Suppose that $f_j |_{B_j} = 0$
for $j = 1, 2, \ldots, m$.
Then there exist a finite subset $I \subset \Z$
and $g_1, g_2, \ldots, g_m \in C (K^I)$
such that $\| (g_j \circ p_I) |_X - f_j \| < \ep$
and $(g_j \circ p_I) |_{B_j} = 0$
for $j = 1, 2, \ldots, m$.
\end{lem}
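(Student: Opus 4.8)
The plan is to first approximate each $f_j$ by a function depending on only finitely many coordinates, and then to perturb each approximant slightly so that it vanishes on~$B_j$ without destroying the approximation.

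For the first step I would invoke the Stone--Weierstrass theorem. The set $\cA$ of functions on $X_0 = K^{\Z}$ of the form $g \circ p_I$ with $I \subset \Z$ finite and $g \in C (K^I)$ is a unital *-subalgebra of $C (X_0)$, since a product of a function that is constant in coordinates not in~$I_1$ and one that is constant in coordinates not in~$I_2$ is constant in coordinates not in $I_1 \cup I_2$, and conjugation preserves the form. As $K$ is a \cms, $C (K)$ separates the points of~$K$, so (using the single-coordinate projections) $\cA$ separates the points of~$X_0$, and hence $\cA |_X$ separates the points of~$X$; therefore $\cA |_X$ is dense in $C (X)$. Since there are only finitely many~$f_j$, after replacing the finite sets produced for the individual $f_j$ by their union~$I$ and composing the approximants with the coordinate projections $K^I \to K^{I_j}$, I can choose a single finite $I \subset \Z$ and functions $\tilde{g}_1, \ldots, \tilde{g}_m \in C (K^I)$ with $\| (\tilde{g}_j \circ p_I) |_X - f_j \| < \ep / 2$ for $j = 1, 2, \ldots, m$.

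For the second step, set $C_j = p_I (B_j) \subset K^I$, which is closed. If $x \in B_j$ then $f_j (x) = 0$, so $| \tilde{g}_j (p_I (x)) | = | (\tilde{g}_j \circ p_I) (x) - f_j (x) | < \ep / 2$, and hence $\| \tilde{g}_j |_{C_j} \| < \ep / 2$. Applying the Tietze extension theorem on the compact metric space $K^I$ to $\tilde{g}_j |_{C_j}$, and then composing the result with a continuous retraction of~$\C$ onto the closed disc of radius $\ep / 2$ (which fixes that disc pointwise, in particular leaves the values on~$C_j$ unchanged), I obtain $\eta_j \in C (K^I)$ with $\eta_j |_{C_j} = \tilde{g}_j |_{C_j}$ and $\| \eta_j \| \leq \ep / 2$. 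Put $g_j = \tilde{g}_j - \eta_j \in C (K^I)$. Then $g_j |_{C_j} = 0$, so $(g_j \circ p_I) |_{B_j} = 0$ because $p_I (B_j) = C_j$, and
\[
\| (g_j \circ p_I) |_X - f_j \|
 \leq \| (\eta_j \circ p_I) |_X \| + \| (\tilde{g}_j \circ p_I) |_X - f_j \|
 < \ep / 2 + \ep / 2 = \ep .
\]

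I do not expect a serious obstacle. The two points that need a little care are the bookkeeping required to use one finite set~$I$ for all of $f_1, \ldots, f_m$ simultaneously, and the fact that the corrective perturbation $\eta_j$ can be chosen with norm at most $\ep / 2$ (so the approximation survives) while still cancelling $\tilde{g}_j$ exactly on~$C_j$ (so that $g_j \circ p_I$ vanishes on~$B_j$); both are standard. Note that neither minimality of~$h$ nor infiniteness of~$X$ enters the argument.
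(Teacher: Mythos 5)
Your proof is correct and follows the same two-step strategy as the paper: first approximate each $f_j$ by a function depending on only finitely many coordinates, then perturb to get exact vanishing on $B_j$. The implementations of both steps differ in minor ways. For the finite-coordinate approximation, the paper first extends $f_j$ to all of $X_0 = K^{\Z}$ by Tietze and then uses the inverse limit $K^{\Z} = \varprojlim_n K^{I_n}$, whereas you apply Stone--Weierstrass directly on $X$ to the algebra of functions of the form $g \circ p_I$; both are routine. For the correction step, the paper multiplies by an Urysohn bump function $b_j$ that is $0$ on $C_j = p_I(B_j)$ and $1$ where the approximant is not small, which costs a factor and forces the tighter tolerance $\ep/5$; you instead subtract a Tietze extension of the restriction $\tilde{g}_j|_{C_j}$ (clipped to have norm $\leq \ep/2$), which cancels the values on $C_j$ exactly and is slightly more economical with the constants. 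Both corrections are standard and valid. One small point worth making explicit in your second step is that the norm-preserving extension requires the radial retraction (as you note) or an equivalent device, since the naive complex Tietze extension applied to real and imaginary parts separately can inflate the sup norm.
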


\begin{proof}
By the Tietze Extension Theorem,
we may assume $f_j$ is defined on all of $X_0$
for $j = 1, 2, \ldots, m$.
For $n \in \N$ set $I_n = \{ - n, \, - n + 1, \, \ldots, n \}$.
Then $K^{\Z} = \varprojlim_n K^{I_n}$,
so there exist $n \in \N$ and
$g_1^{(0)}, g_2^{(0)}, \ldots, g_m^{(0)} \in C (K^{I_n})$
such that
\[
\big\| g_j^{(0)} \circ p_{I_n} - f_j \big\| < \frac{\ep}{5}
\]
for $j = 1, 2, \ldots, m$.

For $j = 1, 2, \ldots, m$
define $C_j = p_{I_n} (B_j)$,
which is a closed subset of $K^{I_n}$.
Then $\big\| g_j^{(0)} |_{C_j} \big\| < \frac{\ep}{5}$.
Choose $b_j \in C (K^{I_n})$
such that $0 \leq b_j \leq 1$,
$b_j$ vanishes on $C_j$,
and $b_j (x) = 1$ for every $x$ in the closed set
\[
\left\{ x \in K^{I_n} \colon
 \big| g_j^{(0)} (x) \big| \geq \frac{2 \ep}{5} \right\}.
\]
Define $g_j = b_j g_j^{(0)}$.
Then $g_j |_{C_j} = 0$,
so $(g_j \circ p_I) |_{B_j} = 0$.
For $x \in K^{I_n}$,
if $\big| g_j^{(0)} (x) \big| \geq \frac{2 \ep}{5}$
then $g_j (x) = g_j^{(0)} (x)$.
If $\big| g_j^{(0)} (x) \big| < \frac{2 \ep}{5}$
then also $| g_j (x) | < \frac{2 \ep}{5}$,
so
$\big| g_j^{(0)} (x) - g_j (x) \big| < \frac{4 \ep}{5}$.
Therefore
$\big\| g_j^{(0)} - g_j \big\| \leq \frac{4 \ep}{5}$,
so that
\[
\| g_j \circ p_I - f_j \|
 \leq \big\| g_j - g_j^{(0)} \big\|
   + \big\| g_j^{(0)} \circ p_{I_n} - f_j \big\|
 < \frac{\ep}{5} + \frac{4 \ep}{5}
 = \ep.
\]
This completes the proof.
\end{proof}

\begin{lem}\label{L_5626_AppByCnst}
Let the notation be as in Convention~\ref{Cv_5619_Subshift},
and let $z \in X$.
% Let $Y_0 \subset X$ be closed with $z \in \sint_X (Y_0)$.
Then for every finite set $F \subset C^* (\Z, X, h)_{ \{ z \} }$
and every $\ep > 0$
there are a finite set $I \subset \Z$
and closed sets $E_k \subset K$ for $k \in I$
such that,
with $E = \prod_{k \in I} E_k$
and $Y = p_I^{-1} (E) \cap X$,
we have $p_I (z) \in \sint (E)$
and for every $a \in F$
there exists $b \in C^* (\Z, X, h)_Y$
which is constant in coordinates not in~$I$
and satisfies $\| b - a \| < \ep$.
\end{lem}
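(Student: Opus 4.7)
First I would approximate each $a \in F$ by a polynomial.  By the density statement~(\ref{Eq_5429_Dense}) of Proposition~\ref{P_5429_CharOB}, we can find $a' = \sum_{n = - N}^{N} f_n u^n \in C (X) [\Z]$ with $\| a - a' \| < \ep / 3$ and $f_n u^n \in C^* (\Z, X, h)_{ \{ z \} }$, where by~(\ref{Eq_5429_CharOB}) each $f_n$ vanishes on the finite orbit segment $B_n = \bigcup_{j = 0}^{n - 1} h^j (\{ z \})$ (for $n > 0$; analogously $B_n = \bigcup_{j = 1}^{-n} h^{-j} (\{ z \})$ for $n < 0$).  We may take a common~$N$ for all $a \in F$ and collect all coefficient functions in one finite family, reducing the problem to approximating each such $f_n$ by a cylinder function which furthermore vanishes on the appropriate shifted copies of a yet-to-be-chosen~$Y$.

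Fix a small $\dt > 0$, to be chosen at the end.  For each $n$ and each $k$ with $h^k (z) \in B_n$, continuity of $f_n$ at $h^k (z)$ (where $f_n$ vanishes) yields an open neighborhood on which $|f_n| < \dt$; intersecting finitely many such neighborhoods gives, for each $k \in \{ -N, \, \ldots, \, N - 1 \}$, an open neighborhood $W_k$ of $h^k (z)$ in~$X$ on which every relevant $f_n$ is smaller than~$\dt$.  Because points of the subshift $X$ are determined by all of their coordinates, a compactness argument using cylinder bases in $K^{\Z}$ produces a finite $I_0 \S \Z$ and a closed product $E_0 \S K^{I_0}$ with $p_{I_0} (z) \in \sint (E_0)$ such that $h^k (Y) \S W_k$ for all $k \in \{ -N, \, \ldots, \, N - 1 \}$, where $Y = p_{I_0}^{-1} (E_0) \cap X$.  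Now apply Lemma~\ref{L_5627_FcnAppr} to the family $\{ f_n \}$ with vanishing sets $\{ B_n \}$ to obtain a finite $I_1 \S \Z$ and $g_n \in C (K^{I_1})$ with $\| g_n \circ p_{I_1} - f_n \| < \dt$ and $(g_n \circ p_{I_1}) |_{B_n} = 0$.  Set $I = I_0 \cup I_1$, view each $g_n$ as an element of $C (K^I)$ via the projection $K^I \to K^{I_1}$, and extend $E_0$ to $E = \prod_{k \in I} E_k$ by taking $E_k = K$ for $k \in I \SM I_0$, so that $p_I^{-1} (E) \cap X = Y$ and $p_I (z) \in \sint (E)$.

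The crux is to modify each $g_n$ inside $K^I$ to force exact vanishing on the shifted cylinders $h^k (Y)$.  The set $T_n = \bigcup_k p_I (h^k (Y)) \S K^I$ (union over those $k$ with $h^k (z) \in B_n$) is compact, and the inclusion $h^k (Y) \S W_k$ together with $\| g_n \circ p_I - f_n \| < \dt$ forces $| g_n | < 2 \dt$ on $T_n$.  By Urysohn's lemma in the compact metric space $K^I$ there is $\ch_n \c K^I \to [0, 1]$ vanishing on $T_n$ and equal to~$1$ on $\{ | g_n | \geq 3 \dt \}$; then $\hat{g}_n := \ch_n g_n$ satisfies $\| \hat{g}_n - g_n \| \leq 3 \dt$ and $\hat{g}_n |_{T_n} = 0$.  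Setting $b = \sum_n ( \hat{g}_n \circ p_I ) u^n$, each coefficient is constant in coordinates not in~$I$ and vanishes on the appropriate portion of $\bigcup_k h^k (Y)$, so Proposition~\ref{P_5429_CharOB} places $b$ in $C^* (\Z, X, h)_Y$; choosing $\dt < \ep / \big( 6 (2 N + 1) \big)$ gives $\| b - a \| < \ep$.

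The main obstacle is reconciling the two competing constraints on $b$: its coefficients must factor through $p_I$ (to be constant in coordinates not in~$I$), yet they must vanish \emph{exactly}, not merely approximately, on the shifted cylinders $h^k (Y)$.  The key device is that because everything already factors through $K^I$, the correction can be performed entirely inside $K^I$ rather than inside~$X$; the Urysohn cutoff is cheap in sup-norm precisely because the continuity estimate and Lemma~\ref{L_5627_FcnAppr} together make $g_n$ small on $T_n$.
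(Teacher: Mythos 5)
Your proof is correct, and it rests on the same ingredients as the paper's: reduction to polynomials via~(\ref{Eq_5429_Dense}), a closed product (cylinder) neighborhood $Y$ of~$z$, Lemma~\ref{L_5627_FcnAppr} to replace coefficient functions by functions factoring through $p_I$, and a Urysohn cutoff performed where the functions are already small. The one structural difference is the order of operations. The paper first replaces $F$ by polynomials lying in $C^* (\Z, X, h)_S$ for a compact neighborhood $S$ of~$z$, so that after choosing the cylinder set $Y \subset S$ the coefficients already vanish exactly on the sets $B_n$ built from~$Y$; Lemma~\ref{L_5627_FcnAppr} (whose proof contains the cutoff) then produces cylinder functions still vanishing on those $B_n$, and nothing remains to be fixed. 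You instead cylinderize first, with coefficients vanishing only on the finite orbit segments of~$z$, and perform the cutoff afterwards, downstairs in $C (K^I)$ against the compact sets $p_I (h^k (Y))$; the estimate $| g_n | < 2 \dt$ on $T_n$, coming from $h^k (Y) \subset W_k$, is exactly what makes that cutoff cheap. Both routes work. Yours has the merit of making explicit the small-perturbation argument that the paper leaves implicit in its opening step (``choose $F_0 \subset C^* (\Z, X, h)_S$\dots''), at the cost of a slight redundancy: the exact vanishing of $g_n$ on $p_{I_1} (B_n)$ supplied by Lemma~\ref{L_5627_FcnAppr} is never used, since your final cutoff kills $\hat{g}_n$ on all of $T_n \supset p_I (B_n)$ anyway, so a plain approximation from $K^{\Z} = \varprojlim_n K^{I_n}$ would have sufficed at that stage.
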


\begin{proof}
Choose a compact neighborhood $S$ of $z$ in~$X$
and a finite set $F_0 \subset C^* (\Z, X, h)_S$
such that for every $a \in F$ there is $c \in F_0$
with $\| c - a \| < \frac{\ep}{2}$.
Write $F_0 = \{ c_1, c_2, \ldots, c_m \}$.
Choose a finite set $I_0 \subset \Z$
and a compact neighborhood $Y_0$ of $p_{I_0} (z)$ in~$p_{I_0} (X)$
such that $p_{I_0}^{-1} (Y_0) \cap X \subset S$.
Choose compact sets $E_k \subset K$ for $k \in I_0$
such that $z_k \in \sint (E_k)$
and such that the set $Y_1 = \prod_{k \in I_0} E_k$
satisfies $Y_1 \subset Y_0$.
Define $Y = p_{I_0}^{-1} (Y_1) \cap X$,
which is contained in~$S$.
Then $F_0 \subset C^* (\Z, X, h)_Y$.
We have $\sint_X (Y) \neq \E$
since $p_{I_0} (y) \in \sint (Y_1)$.
By Lemma~\ref{L_5429_RTime_bdd}, there is $N \in \N$
such that $\bigcup_{n = 0}^N h^n (Y) = X$.
For $n \in \Z$, set
\[
B_n = \begin{cases}
   \bigcup_{j = 0}^{n - 1} h^j (Y)    & \hspace{3em} n > 0
        \\
   \varnothing                        & \hspace{3em} n = 0
       \\
   \bigcup_{j = 1}^{- n} h^{-j} (Y)   & \hspace{3em} n < 0.
\end{cases}
\]
By Proposition~7.5 of~\cite{PhLg},
% \label{P-2816CharOB}
there exist $f_{j, n} \in C_0 (X \SM B_n)$
for $j = 1, 2, \ldots, m$
and $n = - N, \, - N + 1, \, \ldots, N$
such that
$c_j = \sum_{n = - N}^N f_{j, n} u^n$
for $j = 1, 2, \ldots, m$.
Use Lemma~\ref{L_5627_FcnAppr}
to find a finite set $I \subset \Z$
and functions $g_{j, n} \in C (K^I)$
such that
\[
\| (g_{j, n} \circ p_I) |_X - f_{j, n} \| < \frac{\ep}{2 (2 N + 1)}
\]
and $(g_{j, n} \circ p_I) |_{B_n} = 0$
for $j = 1, 2, \ldots, m$
and $n = - N, \, - N + 1, \, \ldots, N$.
\Wolog{} $I_0 \subset I$.
Take $E_k = K$ for $k \in I \SM I_0$,
and define $E = \prod_{k \in I} E_k \subset K^I$.
(Up to a coordinate shuffle,
$E = Y_1 \times K^{I \SM I_0}$.)

Let $a \in F$.
Choose $j \in \{ 1, 2, \ldots, m \}$
such that $\| a - c_j \| < \frac{\ep}{2}$.
Set
\[
b = \sum_{n = - N}^N (g_{j, n} \circ p_I) |_X u^n.
\]
Then
\[
\| a - b \|
 \leq \| a - c_j \|
   + \sum_{n = - N}^N \| (g_{j, n} \circ p_I) |_X - f_{j, n} \|
 < \frac{\ep}{2} + \sum_{n = - N}^N \frac{\ep}{2 (2 N + 1)}
 = \ep.
\]
Clearly $b$ is constant in coordinates not in~$I$.
Also,
since $(g_{j, n} \circ p_I) |_{B_n} = 0$
for $n = - N, \, - N + 1, \, \ldots, N$,
Proposition~7.5 of~\cite{PhLg}
% \label{P-2816CharOB}
shows that $b \in C^* (\Z, X, h)_Y$.
\end{proof}

\begin{lem}\label{L_5626_PvEst}
Let the notation be as in Convention~\ref{Cv_5619_Subshift},
and let $z \in X$.
Assume that the coordinates $z_k$ are all distinct for $k \in \Z$.
Then for every finite set $F \subset C^* (\Z, X, h)_{ \{ z \} }$
and every $\ep > 0$
there are a recursive subhomogeneous algebra
$Q$ such that $\rc (Q) \leq \dim (K)$
and an injective unital \hm{}
$\ph \colon Q \to C^* (\Z, X, h)_{ \{ z \} }$
such that
for every $a \in F$
there exists $b \in Q$
with $\| \ph (b) - a \| < \ep$.
\end{lem}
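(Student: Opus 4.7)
The plan is to combine Lemma~\ref{L_5626_AppByCnst} with Proposition~\ref{L_5624_RectSet}, using the distinct-coordinate hypothesis on~$z$ to force the Rokhlin return times to be large relative to the window in which our approximants live.

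First I would apply Lemma~\ref{L_5626_AppByCnst} to $F$ and~$\ep$ to obtain a finite set $I_0 \subset \Z$, closed sets $E_k^{(0)} \subset K$ with $z_k \in \sint(E_k^{(0)})$ for $k \in I_0$, and for each $a \in F$ an element $b_a \in C^*(\Z, X, h)_{Y^{(0)}}$ that is constant in coordinates not in~$I_0$ and satisfies $\|b_a - a\| < \ep$, where $Y^{(0)} = p_{I_0}^{-1} \big( \prod_{k \in I_0} E_k^{(0)} \big) \cap X$. Next I would enlarge $I_0$ to an interval $I = [n_1, n_2] \cap \Z$ and, using distinctness of the $z_k$, choose pairwise disjoint closed neighborhoods $E_k$ of~$z_k$ in~$K$ with $E_k \subset E_k^{(0)}$ for $k \in I_0$. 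Setting $Y = p_I^{-1} \big( \prod_{k \in I} E_k \big) \cap X$, we have $Y \subset Y^{(0)}$, so $b_a \in C^*(\Z, X, h)_{Y^{(0)}} \subset C^*(\Z, X, h)_Y$; the element $b_a$ is still constant in coordinates not in~$I$; and $\sint_X(Y) \neq \E$ because $p_I (z) \in \sint \big( \prod_{k \in I} E_k \big)$.

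The purpose of the pairwise disjointness is to force a lower bound on return times: for $y \in Y$ and $1 \leq n \leq |I| - 1$, picking any $k \in I$ with $k + n \in I$ forces $y_{k + n} \in E_k \cap E_{k + n} = \E$, so $h^n (y) \notin Y$. Consequently, for the Rokhlin system $\cY$ of Lemma~\ref{L_6224_FullTowers} we obtain $r_l (\cY) \geq |I| = n_2 - n_1 + 1$ for every~$l$. Proposition~\ref{L_5624_RectSet} then delivers a recursive subhomogeneous algebra $Q$ with base spaces $Z_l \subset K^{[n_1, \, n_2 + r_l (\cY)] \cap \Z}$ and matrix sizes $r_l (\cY)$, together with an injective unital \hm{} $\ph_0 \colon Q \to C^*(\Z, X, h)_{\cY} = C^*(\Z, X, h)_Y$ whose range contains every element of $C^*(\Z, X, h)_Y$ that is constant in coordinates not in $[n_1, n_2]$. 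Composing $\ph_0$ with the inclusion $C^*(\Z, X, h)_Y \subset C^*(\Z, X, h)_{\{ z \}}$ (valid since $z \in Y$) yields the required $\ph$, and each $b_a$ pulls back to some $b_a' \in Q$ with $\| \ph (b_a') - a \| = \| b_a - a \| < \ep$.

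Finally, I would bound $\rc (Q)$. Each $Z_l$ sits inside a product of $n_2 - n_1 + 1 + r_l (\cY) \leq 2 r_l (\cY)$ copies of~$K$, so $\dim (Z_l) \leq 2 r_l (\cY) \dim (K)$; the standard estimate for the radius of comparison of a recursive subhomogeneous algebra in terms of $\max_l \dim(Z_l) / (2 r_l (\cY))$ then yields $\rc (Q) \leq \dim (K)$. The main obstacle is exactly this parameter balance: effective approximation requires~$I$ to be large, but keeping $\rc (Q)$ bounded requires $r_l (\cY) / |I|$ to stay bounded below. The distinct-coordinate hypothesis is used precisely to reconcile these demands, via the disjoint-neighborhood argument above.
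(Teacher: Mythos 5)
Your proposal is correct and follows essentially the same route as the paper's proof: approximate $F$ via Lemma~\ref{L_5626_AppByCnst}, shrink the resulting product set to one built from pairwise disjoint closed neighborhoods of the distinct coordinates $z_{n_1}, \ldots, z_{n_2}$ (the paper realizes the disjointness with closed $\dt$-balls and a $3\dt$ separation, which is the same idea), feed this into Proposition~\ref{L_5624_RectSet}, and use the resulting lower bound $r_l \geq n_2 - n_1 + 1$ together with $\dim (Z_l) \leq (n_2 - n_1 + 1 + r_l) \dim (K)$ and the recursive subhomogeneous radius-of-comparison estimate to get $\rc (Q) \leq \dim (K)$. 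The only differences are cosmetic.
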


\begin{proof}
Choose a finite set $I \subset \Z$
and a closed subset $E = \prod_{k \in I} E_k \subset K^{I}$
following Lemma~\ref{L_5626_AppByCnst}.
We may replace $I$ by any larger finite set,
replacing $E$ by its product
with the sets $E_k = K$ for the additional indices~$k$.
This modification does not change the set $p_I^{-1} (E) \cap X$
in Lemma~\ref{L_5626_AppByCnst};
the other part of the conclusion of Lemma~\ref{L_5626_AppByCnst}
(about being constant in coordinates not in~$I$)
is weaker when the set $I$ is larger.
So the conclusion of Lemma~\ref{L_5626_AppByCnst}
still holds after such a replacement.
We can therefore take $I$ to be of the form
\[
I = \{ n_1, \, n_1 + 1, \, \ldots, \, n_2 \}
\]
with $n_1 \leq n_2$.

Choose a metric $\rh$ on~$K$,
and for $y \in K$ and $\dt > 0$
let $B_{\dt} (y)$ be the open ball of radius $\dt$ and center~$y$.
Choose $\dt > 0$
such that for
\[
j, k \in \{ n_1, \, n_1 + 1, \, \ldots, \, n_2 \}
\andeqn
j \neq k,
\]
we have $\rh (z_j, z_k) > 3 \dt$.
Define
\[
C = E \cap \prod_{k = n_1}^{n_2} {\ov{B_{\dt} (z_k)}}
  = \prod_{k = n_1}^{n_2} \big( {\ov{B_{\dt} (z_k)}} \cap E_k \big).
\]
The second expression shows that $C$ is a product set.
Set $Y_0 = p_I^{-1} (E) \cap X$
and $Y = p_I^{-1} (C) \cap X$.
We have $z \in \sint_X (p_I^{-1} (E) \cap X)$
by assumption,
and
\[
p_I^{-1} \left(  \prod_{k = n_1}^{n_2}B_{\dt} (z_k) \right)
\]
is an open subset of $X_0$ which contains~$z$,
so $z \in \sint_X (Y)$.

The sets $I$ and $Y$ satisfy the hypotheses of
Proposition~\ref{L_5624_RectSet}.
Therefore we get a system $\cY$ of Rokhlin towers
as there,
together with a recursive subhomogeneous algebra $Q$
and an injective unital \hm{}
$\ph \colon Q \to C^* (\Z, X, h)_{\cY}$.
The range of $\ph$ is contained in
$C^* (\Z, X, h)_{ \{ z \} }$
because $z \in Y$.
Now let $a \in F$.
The choice of $E$ using Lemma~\ref{L_5626_AppByCnst}
ensures that there is $c \in C^* (\Z, X, h)_{Y_0}$
such that $c$ is constant in coordinates not in~$I$
and $\| c - a \| < \ep$.
Since $Y \subset Y_0$,
we have $c \in C^* (\Z, X, h)_{\cY}$.
It follows from Proposition~\ref{L_5624_RectSet}
that there exists $b \in Q$
such that $\ph (b) = c$.
Thus $\| \ph (b) - a \| < \ep$.

It remains to estimate $\rc (Q)$.

We claim that $h^k (Y) \cap Y = \E$
for $k = 1, 2, \ldots, n_2 - n_1$.
If the claim is false,
there are
\[
k \in \{ 1, 2, \ldots, n_2 - n_1 \}
\andeqn
x \in Y
\]
such that $h^k (x) \in Y$.
We have $n_1 + k \in I$, so
$\rh (x_{n_1 + k}, \, z_{n_1 + k} ) \leq \dt$.
Also from $h^k (x) \in Y$
we get
\[
\rh (x_{n_1 + k}, \, z_{n_1} )
 = \rh ( h^k (x)_{n_1}, \, z_{n_1} )
 \leq \dt.
\]
Therefore
\[
3 \dt
 < \rh ( z_{n_1 + k}, \, z_{n_1} )
 \leq \rh (z_{n_1 + k}, \, x_{n_1 + k} )
           + \rh (x_{n_1 + k}, \, z_{n_1} )
 \leq 2 \dt.
\]
This contradiction proves the claim.

The claim implies that $r_l \geq n_2 - n_1 + 1$
for $l = 0, 1, \ldots, m$.
Since $Q$ has base spaces $Z_l$ and matrix sizes $r_l$
for $l = 0, 1, \ldots, m$,
it follows from Theorem~5.1 of~\cite{Tm9}
that
\begin{equation}\label{Eq_5627_rcRSHA}
\rc (Q)
 \leq \max_{0 \leq l \leq m} \frac{\dim (Z_l) - 1}{2 r_l}.
\end{equation}
We recall (Proposition 3.1.5 of~\cite{Pr})
that if $M$ is a topological space
and $M_0 \subset M$ is closed,
then $\dim (M_0) \leq \dim (M)$.
Also,
by Proposition 3.2.6 of~\cite{Pr}
(``bicompact'' there means compact Hausdorff;
see Definition 1.5.4 of~\cite{Pr}),
if $M_1$ and $M_2$ are compact
then $\dim (M_1 \times M_2) \leq \dim (M_1) + \dim (M_2)$.
For $l = 0, 1, \ldots, m$,
using these facts and
$Z_l \subset K^{[n_1, \, n_2 + r_l] \cap \Z}$
(from Proposition~\ref{L_5624_RectSet})
at the first step,
we get
\[
\dim (Z_l)
% \leq \dim (K^{I_l})
% \leq \card (I_l) \dim (K)
 \leq (n_2 - n_1 + 1 + r_l) \dim (K)
 \leq 2 r_l \dim (K).
\]
Substituting this inequality in~(\ref{Eq_5627_rcRSHA})
gives $\rc (Q) \leq \dim (K)$,
as desired.
\end{proof}

We strengthen Lemma~6.1 of~\cite{Niu}.
We first
state for convenience a combination of results from~\cite{BRTTW}.

\begin{prp}[\cite{BRTTW}]\label{P_5620_BRTTW}
Let $A$ be a unital \ca{} all of whose quotients are stably finite.
Then $\rc (A)$ is the infimum of all numbers $r \geq 0$
such that whenever $m, n \in \Nz$ with $n > 0$
and $\et, \mu \in \Cu (A)$ satisfy
\[
\frac{m}{n} > r
\andeqn
(n + 1) \et + m \langle 1_A \rangle_A \leq n \mu,
\]
then $\et \leq \mu$.
\end{prp}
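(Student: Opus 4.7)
The plan is to verify that the infimum in the statement coincides with Toms' original definition of the radius of comparison: $\rc(A)$ is the infimum of those $r \geq 0$ such that, for all $\et, \mu \in \Cu (A)$, whenever $d_\tau(\et) + r < d_\tau(\mu)$ for every lower-semicontinuous normalized $2$-quasitrace $\tau$ on $A$ one has $\et \leq \mu$. I will show both that the combinatorial condition follows from $\rc (A) \leq s$ and, conversely, that the combinatorial condition at $s$ forces $\rc (A) \leq s$.

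For the direction $\inf \{ r \colon P(r) \} \leq \rc (A)$, suppose $\rc (A) \leq s$, and that $m, n \in \Nz$ with $n > 0$ satisfy $m/n > s$ together with $(n + 1) \et + m \langle 1_A \rangle_A \leq n \mu$. For any normalized $2$-quasitrace $\tau$, applying the dimension function $d_\tau$ gives $(n+1) d_\tau(\et) + m \leq n d_\tau(\mu)$, hence
\[
d_\tau(\mu) - d_\tau(\et) \geq \frac{m + d_\tau(\et)}{n} \geq \frac{m}{n} > s.
\]
Thus $d_\tau(\et) + s < d_\tau(\mu)$ for every such $\tau$, and the definition of $\rc (A) \leq s$ yields $\et \leq \mu$, so $P(s)$ holds.

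For the reverse direction, assume $P(s)$ holds and take $\et, \mu \in \Cu (A)$ with $d_\tau(\et) + s < d_\tau(\mu)$ for all $\tau$. Using compactness of the normalized $2$-quasitrace space in the topology of pointwise convergence on the positive cone, together with lower semicontinuity of $d_\tau$ in $\tau$, one extracts a uniform gap $\delta > 0$ such that $d_\tau(\mu) - d_\tau(\et) \geq s + \delta$ for all $\tau$. Choose positive integers $m, n$ with $s < m/n < s + \delta/2$ and $n$ large enough that $(n+1) d_\tau(\et) + m \leq n d_\tau(\mu)$ holds uniformly in $\tau$. The critical step is to lift this tracial inequality to the Cuntz semigroup, i.e.\ to conclude $(n+1) \et + m \langle 1_A \rangle_A \leq n \mu$ in $\Cu(A)$; this uses the assumption that every quotient of $A$ is stably finite, which ensures that quasitraces separate Cuntz classes sufficiently to detect the required domination after passing to a suitable matrix amplification and applying a Hahn-Banach / approximation argument on dimension functions. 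Once this lift is achieved, $P(s)$ gives $\et \leq \mu$, so $\rc (A) \leq s$.

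The main obstacle is exactly this lifting step: converting a uniform gap in the tracial dimension functions into an algebraic inequality in $\Cu (A)$. This is the nontrivial technical heart of the argument in~\cite{BRTTW}; the hypothesis that every quotient of $A$ is stably finite is used precisely here, to guarantee that quasitraces are abundant enough to detect non-comparison and to avoid pathological behavior of the Cuntz semigroup on purely infinite quotients.
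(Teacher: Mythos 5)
The paper does not actually prove this proposition; its ``proof'' is a pure citation (combine Proposition~3.2.1 with $e = 1_A$, Definition~3.2.2, and Proposition~3.2.3 of~\cite{BRTTW}), and the statement itself is attributed to~\cite{BRTTW}. Your first direction (from $\rc (A) \leq s$ to the combinatorial condition) is correct and elementary: applying $d_\tau$ to $(n + 1) \et + m \langle 1_A \rangle_A \leq n \mu$ gives $d_\tau (\mu) \geq d_\tau (\et) + m/n > d_\tau (\et) + s$, and one then invokes $r$-comparison for some $r$ with $\rc (A) \leq r < m/n$. The converse direction, however, contains two genuine gaps. First, the ``uniform gap'' extraction does not work as stated: $\tau \mapsto d_\tau (\et)$ is only lower semicontinuous, so $\tau \mapsto d_\tau (\mu) - d_\tau (\et)$ need not be lower semicontinuous, and compactness of the quasitrace space does not by itself yield a uniform $\dt > 0$. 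The standard repair is to replace $\et$ by a way-below element (concretely, by $\langle (a - \ep)_{+} \rangle$ for a representative $a$), for which $d_\tau$ admits a continuous upper bound in $\tau$.

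Second, and more seriously, the ``critical step''---passing from $d_\tau (\et) + m/n \leq d_\tau (\mu)$ for all $\tau$ to the Cuntz semigroup inequality $(n + 1) \et + m \langle 1_A \rangle_A \leq n \mu$---is exactly the content of~\cite{BRTTW}, and your description of why it works is not right. Quasitraces do \emph{not} ``separate Cuntz classes sufficiently to detect the required domination'': that would be strict comparison, which is precisely what fails whenever $\rc (A) > 0$, and the whole point of the weaker conclusion $(n + 1) \et + m \langle 1_A \rangle_A \leq n \mu$ (rather than $\et \leq \mu$) is that it \emph{can} be deduced from functional inequalities without any comparison hypothesis. The actual mechanism is: (i) stable finiteness of every quotient guarantees, via the Blackadar--Handelman theorem, that every normalized state on $\big( \Cu (A), \langle 1_A \rangle_A \big)$ is induced by a lower semicontinuous $2$-quasitrace, so the hypothesis for all $\tau$ yields the inequality for all states; and (ii) a Goodearl--Handelman--type separation argument in the ordered semigroup (comparison by states) converts uniform state domination with gap $m/n$ into the algebraic inequality $(n + 1) \et + m \langle 1_A \rangle_A \leq n \mu$. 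As written, your proposal leaves the entire substance of the proposition as a black box and attaches an incorrect heuristic to it; since the paper itself only cites~\cite{BRTTW}, the honest options are either to cite as the paper does or to actually carry out steps (i) and (ii).
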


\begin{proof}
In~\cite{BRTTW},
combine Proposition 3.2.1 (with $e = 1_A$),
Definition 3.2.2 and the discussion afterwards,
and Proposition 3.2.3.
\end{proof}

\begin{lem}\label{L_5620_GenNiu}
Let $A$ be a unital \ca{} all of whose quotients are stably finite,
and let $r \in [0, \I)$.
Suppose that for every finite set $F \subset A$ and every $\ep > 0$
there is a unital \ca{} $C$ all of whose quotients are stably finite
and an injective unital \hm{} $\ph \colon C \to A$
such that $\rc (C) < r + \ep$ and for all $a \in F$ we have
$\dist (a, \ph (C)) < \ep$.
Then $\rc (A) \leq r$.
\end{lem}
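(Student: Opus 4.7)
The plan is to verify the characterization of radius of comparison from Proposition~\ref{P_5620_BRTTW}: I show that for every $\varepsilon > 0$, whenever $m, n \in \Nz$ with $n > 0$ and $\eta, \mu \in \Cu(A)$ satisfy $m/n > r + \varepsilon$ and $(n+1)\eta + m\langle 1_A\rangle_A \leq n\mu$, then $\eta \leq \mu$. Writing $\eta = \langle a \rangle_A$ and $\mu = \langle b \rangle_A$ with $a, b \in M_N(A)_+$, the task reduces further to showing $\langle (a - \gamma)_+ \rangle_A \leq \mu$ for an arbitrary fixed $\gamma > 0$.

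The key move is to transfer not only the elements $a, b$ but also a \emph{witness} of the Cuntz subequivalence into the approximating subalgebra. Using Rørdam's characterization, from $(n+1) a \oplus m 1_A \precsim n b$ in $M_L(A)$, with $L = N(n+1) + m$, one picks $\xi_0, \xi_1 > 0$ small and $s \in M_L(A)$ with $s \cdot n(b - \xi_1)_+ \cdot s^* = ((n+1) a \oplus m 1_A - \xi_0)_+$. Applying the hypothesis of the lemma with $F$ equal to the matrix entries of $a$, $b$, $s$ and a tolerance $\delta > 0$ chosen small compared with $\xi_0, \xi_1, \gamma$, one obtains a unital C*-algebra $C$ with stably finite quotients, $\rc(C) < r + \varepsilon$, an injective unital homomorphism $\varphi \colon C \to A$, and (after assembling entrywise approximations and taking self-adjoint or positive parts where needed) elements $a', b' \in M_N(C)_+$ and $s' \in M_L(C)$ whose images under the induced map $\tilde\varphi$ lie within a quantity controlled by $\delta$ (and by $L$, $\|s\|$) of $a$, $b$, and $s$ respectively.

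Because $\tilde\varphi$ is automatically isometric (as $\varphi$ is injective), the approximate witness relation transfers directly into $C$: for some $\delta'$ controlled by $\delta$ and $\|a\|, \|b\|, \|s\|$, one has
\[
\bigl\| s' \cdot n(b' - \xi_1)_+ \cdot s'^* - ((n+1) a' \oplus m 1_C - \xi_0)_+ \bigr\| < \delta'
\]
in $M_L(C)$. Rørdam's lemma in $C$, together with the block-diagonal identity $((n+1) a' \oplus m 1_C - \xi_0 - \delta')_+ = (n+1)(a' - \xi_0 - \delta')_+ \oplus m(1_C - \xi_0 - \delta')_+$ and the observation that $\langle (1_C - t)_+ \rangle_C = \langle 1_C \rangle_C$ for $0 \leq t < 1$, then gives
\[
(n+1)\langle (a' - \xi_0 - \delta')_+ \rangle_C + m\langle 1_C \rangle_C \leq n\langle (b' - \xi_1)_+ \rangle_C
\]
in $\Cu(C)$. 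Since $m/n > r + \varepsilon > \rc(C)$, Proposition~\ref{P_5620_BRTTW} applied to $C$ upgrades this to $\langle (a' - \xi_0 - \delta')_+ \rangle_C \leq \langle (b' - \xi_1)_+ \rangle_C$. Pushing forward under $\Cu(\tilde\varphi)$ and applying Rørdam's lemma on each side (using $\|\tilde\varphi(a') - a\|, \|\tilde\varphi(b') - b\| < \delta < \xi_1$) produces $\langle (a - \xi_0 - \delta' - \delta)_+ \rangle_A \leq \langle b \rangle_A$; arranging $\xi_0 + \delta' + \delta \leq \gamma$ completes the argument. The main obstacle is that an injective unital homomorphism does not in general reflect Cuntz comparison, so one cannot simply pull the hypothesis inequality in $A$ back to $C$; approximating the witness $s$ alongside $a$ and $b$, and then exploiting isometry of $\tilde\varphi$, is precisely what allows the derivation of a genuine inequality in $\Cu(C)$ to which the $\rc(C)$ bound can be applied.
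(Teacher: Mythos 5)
Your proposal is correct and follows essentially the same route as the paper's proof: reduce via Proposition~\ref{P_5620_BRTTW} to a Cuntz comparison with a tolerance $(a-\gamma)_+$, approximate $a$, $b$, \emph{and} a witness of the subequivalence inside the subalgebra $C$ (using that an injective homomorphism is isometric, so the approximate witness relation holds in $C$ itself), apply the radius-of-comparison bound in $C$, and push the resulting subequivalence back to $A$ with R{\o}rdam's lemma. The only cosmetic difference is that you use the exact form of the witness ($s\,n(b-\xi_1)_+ s^* = (\cdot - \xi_0)_+$) where the paper uses an approximate one, which changes nothing of substance.
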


\begin{proof}
We begin with several preliminaries.
First,
it is easy to see that
the hypotheses imply that
for every finite set $F_0 \subset K \otimes A$,
every finite set $F_1 \subset (K \otimes A)_{+}$,
and every $\ep > 0$
there is a unital \ca{} $C$ all of whose quotients are stably finite
and an injective unital \hm{} $\ph \colon C \to A$
such that $\rc (C) < r + \ep$,
for all $a \in F_0$ we have
$\dist \big( a, \, (\id_K \otimes \ph) (C) \big) < \ep$,
and for all $a \in F_1$ we have
$\dist \big( a, \, (\id_K \otimes \ph) (C_{+}) \big) < \ep$.
Second,
we will take the algebra $C$ to be a subalgebra of~$A$.
Third,
with the help of a fixed isomorphism $K \otimes K \cong K$
and the obvious injective \hm{s}
(with $n$ summands in the first algebra)
\begin{equation}\label{Eq_6309_sumInc}
K \oplus K \oplus \cdots \oplus K
 \longrightarrow M_n \otimes K
 \longrightarrow K \otimes K,
\end{equation}
we can assume that direct sums of elements in $K \otimes A$
are defined in a standard way,
so that,
for example,
\[
\| a_1 \oplus a_2 - b_1 \oplus b_2 \|
 = \max ( \| a_1 - b_1 \|, \, \| a_2 - b_2 \| ),
\]
and similarly for more summands.
Finally,
since we use Cuntz comparison in several algebras at the same time,
we always label the notation with the relevant algebra.

By Proposition~\ref{P_5620_BRTTW},
it suffices to let $m, n \in \Nz$ satisfy $n > 0$
and $\frac{m}{n} > r$,
let $a, b \in (K \otimes A)_{+}$ satisfy
$(n + 1) \langle a \rangle_A + m \langle 1_A \rangle
  \leq n \langle b \rangle_A$
in $\Cu (A)$,
and show that $a \precsim_A b$.
By Proposition~2.6 of \cite{KR},
% Proposition~2.4 of \cite{Rd0},
it suffices to show that
$(a - \ep)_{+} \precsim_A b$ for all $\ep > 0$.

So let $m, n, a, b$ be as above, and let $\ep > 0$.
\Wolog{} $\| a \| \leq 1$ and $\| b \| \leq 1$.
Also, we may assume $\ep < 1$.
Let $x \in (K \otimes A)_{+}$
be the direct sum of $n + 1$ copies of $a$,
let $y \in (K \otimes A)_{+}$
be the direct sum of $n$ copies of $b$,
and let $q \in (K \otimes A)_{+}$
be the direct sum of $m$ copies of~$1_A$.
The relation
$(n + 1) \langle a \rangle_A + m \langle 1 \rangle_A
 \leq n \langle b \rangle_A$
means that
$x \oplus q \precsim_A y$.
By Proposition~2.6 of \cite{KR},
% Proposition~2.4 of \cite{Rd0},
there exists $\dt > 0$ such that
\[
\big( (x \oplus q) - \tfrac{1}{3} \ep \big)_{+}
   \precsim_A (y - \dt)_{+}.
\]
Since $\ep < 3$, this is equivalent to
\[
\big( x - \tfrac{1}{3} \ep \big)_{+} \oplus q \precsim_A (y - \dt)_{+}.
\]
Therefore there is $v \in K \otimes A$ such that
\begin{equation}\label{Eq_6310_xqy}
\big\| \big( x - \tfrac{1}{3} \ep \big)_{+} \oplus q
   - v^* (y - \dt)_{+} v \big\|
 < \frac{\ep}{12}.
\end{equation}
A polynomial approximation argument provides
%% Lemma~\ref{L:FCF},
$\rh > 0$ with
\[
\rh
 \leq \min \left( 1, \, \dt, \, \frac{\ep}{24 ( \| v \| + 1 )} \right),
\]
and so small
that whenever $D$ is a \ca{}
and $z \in D_{+}$ satisifies $\| z \| \leq 1$,
then for all $c \in D_{+}$
such that $\| c - z \| < \rh$,
we have
\[
\big\| (c - \dt )_{+} - (z - \dt )_{+} \big\|
 < \frac{\ep}{24 ( \| v \| + 1 )^2}
\andeqn
\big\| \big( c - \tfrac{1}{3} \ep \big)_{+}
       - \big( z - \tfrac{1}{3} \ep \big)_{+}\big \|
   < \frac{\ep}{12}.
\]

By hypothesis,
there are a unital subalgebra $C \subset A$
all of whose quotients are stably finite
and such that $\rc (C) < \frac{m}{n}$,
and $a_0, b_0 \in (K \otimes C)_{+}$
and $v_0 \in K \otimes C$,
such that
\begin{equation}\label{Eq_6310_abv}
\| a_0 - a \| < \rh,
\,\,\,\,\,\,
\| b_0 - b \| < \rh,
\andeqn
\| v_0 - v \| < \rh.
\end{equation}
Since $C$ is unital,
we have $q \in K \otimes C$.
Let $x_0 \in (K \otimes C)_{+}$
be the direct sum of $n + 1$ copies of $a_0$,
and let $y_0 \in (K \otimes C)_{+}$
be the direct sum of $n$ copies of $b_0$.
Since for each~$n$ we are using a single
map as in~(\ref{Eq_6309_sumInc})
throughout,
we have $\| x_0 - x \| < \rh$
and $\| y_0 - y \| < \rh$.
Also % $\| b_0 \| \leq 2$ and
$\| v_0 \| \leq \| v \| + 1$.
Then,
using (\ref{Eq_6310_xqy}), (\ref{Eq_6310_abv}),
and the choice of~$\rh$ at the third step,
\begin{align*}
& \big\| \big( x_0 - \tfrac{1}{3} \ep \big)_{+} \oplus q
   - v_0^* (y_0 - \dt)_{+} v_0 \big\|
\\
& \hspace*{3em} {\mbox{}}
\leq \big\| \big( x - \tfrac{1}{3} \ep \big)_{+} \oplus q
   - v^* (y - \dt)_{+} v \big\|
 + \big\| \big( x_0 - \tfrac{1}{3} \ep \big)_{+}
    - \big( x - \tfrac{1}{3} \ep \big)_{+} \big\|
\\
& \hspace*{5em} {\mbox{}}
 + 2 \| v_0 \|^2 \big\| (y_0 - \dt )_{+} - (y - \dt )_{+} \big\|
 + ( \| v_0 \| + \| v \| ) \| (y - \dt)_{+} \| \| v_0 - v \|
\\
& \hspace*{3em} {\mbox{}}
 \leq \big\| \big( x - \tfrac{1}{3} \ep \big)_{+} \oplus q
   - v^* (y - \dt)_{+} v \big\|
 + \big\| \big( a_0 - \tfrac{1}{3} \ep \big)_{+}
    - \big( a - \tfrac{1}{3} \ep \big)_{+} \big\|
\\
& \hspace*{5em} {\mbox{}}
 + 2 ( \| v \| + 1 )^2 \big\| (b_0 - \dt )_{+} - (b - \dt )_{+} \big\|
 + ( 2 \| v \| + 1 ) \| (b - \dt)_{+} \| \| v_0 - v \|
\\
& \hspace*{3em} {\mbox{}}
 < \frac{\ep}{12}
  + \frac{\ep}{12}
  + 2 ( \| v \| + 1 )^2 \left( \frac{\ep}{24 ( \| v \| + 1 )^2} \right)
  + 2 ( \| v \| + 1 ) \| \rh
 \leq \frac{\ep}{3}.
\end{align*}
It follows from Proposition~2.2 of \cite{Rd0}
that
\[
\big[ \big( x_0 - \tfrac{1}{3} \ep \big)_{+} \oplus q
 - \tfrac{1}{3} \ep \big]_{+}
   \precsim_C (y_0 - \dt)_{+}.
\]
Since $\ep < 3$,
this implies
\[
\big( x_0 - \tfrac{2}{3} \ep \big)_{+} \oplus q
   \precsim_C (y_0 - \dt)_{+}.
\]
In $\Cu (C)$ we therefore have
\[
(n + 1) \big\langle \big( a_0 - \tfrac{2}{3} \ep \big)_{+} \big\rangle_C
   + m \langle 1 \rangle_C
 \leq n \langle (b_0 - \dt)_{+} \rangle_C.
\]
Since $\rc (C) < \frac{m}{n}$,
Proposition~\ref{P_5620_BRTTW}
implies that
\[
\big( a_0 - \tfrac{2}{3} \ep \big)_{+} \precsim_C (b_0 - \dt)_{+}.
\]
Using $\| a_0 - a \| < \tfrac{\ep}{3}$
and Corollary~1.6 of~\cite{PhLg}
at the first step,
and $\| b_0 - b \| < \dt$ and Proposition~2.2 of \cite{Rd0}
at the third step,
we get
\[
(a - \ep)_{+}
 \precsim_A \big( a_0 - \tfrac{2}{3} \ep \big)_{+}
 \precsim_C (b_0 - \dt)_{+}
 \precsim_A b,
\]
as was to be proved.
\end{proof}

\begin{thm}\label{T_6227_Subshift}
Let $K$ be a compact metric space,
and let $X \subset K^{\Z}$ be an infinite closed set which is minimal
for the backwards shift on~$K^{\Z}$.
Let $h \colon X \to X$
be the restriction to $X$ of the backwards shift on~$K^{\Z}$.
Suppose that there is $z \in X$
such that the coordinates $z_k$ are all distinct for $k \in \Z$.
Then $\rc ( C^* (\Z, X, h) ) \leq \dim (K)$.
\end{thm}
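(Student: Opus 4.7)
The plan is to combine Lemma~\ref{L_5626_PvEst}, Lemma~\ref{L_5620_GenNiu}, and the stably large subalgebra machinery from~\cite{PhLg}. Lemma~\ref{L_5626_PvEst} already provides, for every finite set $F \subset C^* (\Z, X, h)_{ \{ z \} }$ and every $\ep > 0$, an injective unital \hm{} $\ph \colon Q \to C^* (\Z, X, h)_{ \{ z \} }$ from a recursive subhomogeneous algebra~$Q$ satisfying $\rc (Q) \leq \dim (K)$, with $\ph (Q)$ being $\ep$-close to every element of~$F$. Setting $r = \dim (K)$ in the hypothesis of Lemma~\ref{L_5620_GenNiu} and using the estimate $\rc (Q) \leq \dim (K) < \dim (K) + \ep$ will give $\rc ( C^* (\Z, X, h)_{ \{ z \} } ) \leq \dim (K)$.

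Before invoking Lemma~\ref{L_5620_GenNiu}, I will verify that all quotients of the algebras involved are stably finite. For the recursive subhomogeneous~$Q$ this is immediate, since every quotient of a recursive subhomogeneous algebra is again subhomogeneous and hence stably finite. For $A = C^* (\Z, X, h)_{ \{ z \} }$, I will use the description recalled in the introduction: if $Y_0 \supset Y_1 \supset \cdots$ is a decreasing sequence of closed neighborhoods of~$z$ with $\sint (Y_n) \neq \E$ and $\bigcap_n Y_n = \{ z \}$, then $C^* (\Z, X, h)_{ \{ z \} } \cong \dirlim_n C^* (\Z, X, h)_{Y_n}$ exhibits $A$ as a direct limit of recursive subhomogeneous algebras. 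Residual stable finiteness is preserved under direct limits, so all quotients of~$A$ are stably finite.

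For the final step, since $X$ is infinite and $h$ is minimal, no point of~$X$ is periodic, and in particular $h^k (z) \neq z$ for all $k \neq 0$ (this is also transparent from the fact that $z$ has distinct coordinates). Corollary~7.11 of~\cite{PhLg} then guarantees that $C^* (\Z, X, h)_{ \{ z \} }$ is a stably large subalgebra of $C^* (\Z, X, h)$ in the sense of Definition~5.1 of~\cite{PhLg}, and Theorem~6.14 of~\cite{PhLg} yields
\[
\rc ( C^* (\Z, X, h) ) = \rc ( C^* (\Z, X, h)_{ \{ z \} } ) \leq \dim (K).
\]
The substantive obstacles have already been overcome in the earlier sections: the carefully engineered recursive subhomogeneous decomposition of Section~\ref{Sec_Rokh}, the approximating-system machinery of Section~\ref{Sec_OBSubSf}, and especially the choice of $\dt$ in Lemma~\ref{L_5626_PvEst} (which forces $r_l \geq n_2 - n_1 + 1$ and so makes Toms' RSH estimate yield $\rc (Q) \leq \dim (K)$) are what make the estimate work. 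What remains here is simply to assemble these pieces and check the stable-finiteness hypothesis needed by Lemma~\ref{L_5620_GenNiu}.
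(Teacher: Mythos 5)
Your proposal is correct and follows essentially the same route as the paper: apply Lemma~\ref{L_5626_PvEst} to verify the hypotheses of Lemma~\ref{L_5620_GenNiu} for $A = C^* (\Z, X, h)_{ \{ z \} }$ with $r = \dim (K)$, then transfer the estimate to $C^* (\Z, X, h)$ via Corollary~7.11 and Theorem~6.14 of~\cite{PhLg}. Your explicit verification that all quotients of $A$ itself are stably finite (via the direct limit of orbit-breaking recursive subhomogeneous algebras) is a detail the paper treats more briefly, but otherwise the two arguments coincide.
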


\begin{proof}
We follow the notation of Convention~\ref{Cv_5619_Subshift}.
Set $A = C^* (\Z, X, h)_{ \{ z \} }$.
We verify that $A$
satisfies the hypotheses of Lemma~\ref{L_5620_GenNiu}
with $r = \dim (K)$.
So let $F \subset A$ be finite and let $\ep > 0$.
Apply Lemma~\ref{L_5626_PvEst}
to get a recursive subhomogeneous algebra
$Q$ such that $\rc (Q) \leq \dim (K)$
and an injective unital \hm{}
$\ph \colon Q \to A$
such that $\dist ( a, \, \ph (Q) ) < \ep$
for all $a \in F$.
All quotients of recursive subhomogeneous algebras
are stably finite,
so the hypotheses of Lemma~\ref{L_5620_GenNiu}
hold.

Lemma~\ref{L_5620_GenNiu} now implies that $\rc (A) \leq \dim (K)$.
Since $A$ is a large subalgebra of~$C^* (\Z, X, h)$
(by Corollary~7.11 of~\cite{PhLg}),
% \label{C_4819_AYStabLg}
and $C^* (\Z, X, h)$ is infinite dimensional and stably finite,
it follows from Theorem 6.14 of~\cite{PhLg})
% \label{T_4814_RCEq}
that $\rc ( C^* (\Z, X, h) ) = \rc (A)$.
The conclusion follows.
\end{proof}

\begin{thm}\label{T_6227_TrueEst}
Let $X$ be an infinite compact metric space,
and let $h \colon X \to X$ be a \mh.
Let $d \in \N$ satisfy $d > 36 \cdot \mdim (h)$.
Then $\rc ( C^* (\Z, X, h) ) \leq d$.
\end{thm}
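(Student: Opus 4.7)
The plan is to apply Lindenstrauss's equivariant embedding theorem (Theorem~5.1 of~\cite{Lnd}) to reduce to Theorem~\ref{T_6227_Subshift}. Since $d > 36\,\mdim(h)$, that theorem provides an equivariant embedding of $(X, h)$ into $(K^{\Z}, h_K)$ with $K = [0, 1]^d$; identifying $(X, h)$ with its image, a minimal subshift of $K^{\Z}$, and noting $\dim(K) = d$, the desired bound $\rc(C^* (\Z, X, h)) \leq d$ will follow at once from Theorem~\ref{T_6227_Subshift} once we exhibit a point $z \in X$ whose coordinates $z_k \in K$ are pairwise distinct for all $k \in \Z$.

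To secure such a $z$, set $f(x) = x_0$ so that $x_k = f(h^k x)$, and for $m \in \Z \SM \{0\}$ put $S_m = \{x \in X : f(x) = f(h^m x)\}$; a point with distinct coordinates is precisely one not lying in $h^{-n}(S_m)$ for any $n \in \Z$ and any $m \neq 0$. First, $S_m \neq X$: if $f = f \circ h^m$ identically, then $\iota(x)$ would be $m$-periodic in $K^{\Z}$ and hence $\iota(h^m x) = \iota(x)$ for all $x$, contradicting injectivity of $\iota$ together with the absence of periodic points in an infinite minimal $\Z$-system. I propose to further arrange, via a Baire-category modification of the embedding-inducing function in $C(X, K)$, that each $S_m$ has empty interior: the set of $f$ for which $\iota_f(x)_k = f(h^k x)$ is an equivariant embedding is a dense $G_\delta$ in $C(X, K)$ by Lindenstrauss's proof, and within it, for each $m \neq 0$ and each nonempty open $U$ from a countable base of $X$, the subset $\{f : S_m \cap U \neq U\}$ is open dense, with density following from a small bump perturbation at a point $x_0 \in U$ (note $x_0 \neq h^m x_0$ by aperiodicity, so $f(x_0)$ may be modified without affecting $f(h^m x_0)$). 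A countable intersection yields a single $f$ such that $\iota_f$ is an equivariant embedding and every $S_m$ has empty interior. Then $\bigcup_{n \in \Z,\, m \neq 0} h^{-n}(S_m)$ is a countable union of nowhere dense closed sets in $X$, hence meager, and any $z$ in its (dense) complement has the distinct coordinates property.

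With such $z$, Theorem~\ref{T_6227_Subshift} delivers $\rc(C^*(\Z, X, h)) \leq \dim(K) = d$, completing the proof. The main obstacle is the joint Baire-category step in $C(X, K)$: one must simultaneously preserve the injectivity of $\iota_f$ (Lindenstrauss's condition) and arrange that each $S_m$ have empty interior. Both are dense $G_\delta$ conditions on $f$, so their countable intersection is nonempty, but verifying density of each $\{f : S_m \cap U \neq U\}$, and checking that the localized perturbations needed for it do not destroy the embedding property, is where the care lies.
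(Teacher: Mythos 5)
Your proposal is correct and follows essentially the same route as the paper: embed $(X,h)$ equivariantly into $([0,1]^d)^{\Z}$ via Theorem~5.1 of~\cite{Lnd}, use a Baire category argument in $C(X,K)$ (intersecting with Lindenstrauss's dense $G_\dt$ of embedding-inducing maps) to secure a point whose coordinates are pairwise distinct, and then invoke Theorem~\ref{T_6227_Subshift}. The paper's genericity step is slightly simpler than yours: it fixes $z \in X$ in advance (its orbit is infinite by minimality) and intersects $S_0$ with the countably many dense open sets $\{ f \in C(X,K) : f(h^m(z)) \neq f(h^n(z)) \}$ for $m \neq n$, which avoids your second Baire category argument in $X$ and the need to make each set $S_m$ nowhere dense.
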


\begin{proof}
Set $K = [0, 1]^d$.
Let $C (X, K)$ be the set of all \cfn{s}
from $X$ to~$K$,
which is a closed subset of the \ca{} $C (X) \otimes \C^d$.
For any $f \in C (X, K)$,
we let $I_f \colon X \to K^{\Z}$ be as in Remark~\ref{R_6227_If}.
Theorem~5.1 of~\cite{Lnd}
provides a dense $G_{\dt}$-set $S_0 \subset C (X, K)$
such that for every $f \in S_0$,
the map $I_{f}$ is injective.

Fix $z \in X$.
Since $h$ is minimal and $X$ is infinite,
the points $h^k (z)$,
for $k \in \Z$,
are all distinct.
It is now easy to see that for $m, n \in \Z$ with $m \neq n$,
the set
\[
S_{m, n}
= \big\{ f \in C (X, K) \colon f (h^m (z)) \neq f (h^n (z)) \big\}
\]
is a dense open subset of $C (X, K)$.
By the Baire Category Theorem,
the set
\[
S = S_0 \cap \bigcap_{n \in \Z}
        \bigcap_{m \in \Z \SM \{ n \} } S_{m, n}
\]
is nonempty (in fact, dense).
Choose any $f \in S$.
Then $I_f$ is an equivariant \hme{}
from $X$ to a shift invariant subset of~$K^{\Z}$
such that the coordinates $I_f (z)_k$ are all distinct for $k \in \Z$.
We may therefore assume that $X \subset K^{\Z}$
and that the coordinates $z_k$ are all distinct for $k \in \Z$.
Now Theorem~\ref{T_6227_Subshift}
implies that $\rc ( C^* (\Z, X, h) ) \leq \dim (K) = d$.
\end{proof}

\begin{cor}\label{C_6227_StatedEst}
Let $X$ be a compact metric space,
and let $h \colon X \to X$ be a \mh.
Then $\rc ( C^* (\Z, X, h) ) \leq 1 + 36 \cdot \mdim (h)$.
\end{cor}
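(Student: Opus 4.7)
The plan is to deduce Corollary~\ref{C_6227_StatedEst} from Theorem~\ref{T_6227_TrueEst} by a careful choice of integer. First, if $\mdim(h) = \infty$, the inequality $\rc(C^*(\Z, X, h)) \leq 1 + 36 \cdot \mdim(h)$ is vacuous, so we may assume $\mdim(h) < \infty$. Second, if $X$ is finite then minimality of $h$ forces $X$ to consist of a single periodic orbit, so $C^*(\Z, X, h) \cong M_{\card(X)} \otimes C(\mathbb{T})$. This algebra is well known to have radius of comparison zero (it is stably finite of stable rank one, has cancellation, and its ordered $K_0$ is determined by normalized trace), and $\mdim(h) = 0$ trivially; the stated inequality therefore holds in this case as well, since $1 + 36 \cdot \mdim(h) \geq 1$.

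For the main case, assume $X$ is infinite and $\mdim(h) \in [0, \infty)$. Let $d$ be the smallest positive integer with $d > 36 \cdot \mdim(h)$; since $\mdim(h) \geq 0$, such a $d$ exists in $\N$, and by construction
\[
d \leq 36 \cdot \mdim(h) + 1 = 1 + 36 \cdot \mdim(h).
\]
Indeed, if $36 \cdot \mdim(h) \notin \Z$ then $d = \lceil 36 \cdot \mdim(h) \rceil$, while if $36 \cdot \mdim(h) \in \Z$ then $d = 36 \cdot \mdim(h) + 1$; in either case the displayed inequality holds. Theorem~\ref{T_6227_TrueEst} then applies to this choice of $d$ and yields $\rc(C^*(\Z, X, h)) \leq d \leq 1 + 36 \cdot \mdim(h)$, as desired.

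There is no substantial obstacle here: all of the analytic work, including the use of Lindenstrauss's equivariant embedding theorem, the dimension-theoretic bound on the radius of comparison of the recursive subhomogeneous approximants, and the large-subalgebra argument passing between $C^*(\Z, X, h)_{\{z\}}$ and $C^*(\Z, X, h)$, is already packaged inside Theorem~\ref{T_6227_TrueEst}. The corollary is simply the observation that the statement of that theorem, phrased in terms of an integer strictly exceeding $36 \cdot \mdim(h)$, converts to the clean real-valued bound $1 + 36 \cdot \mdim(h)$ by taking the ceiling (respectively, the successor in the integer case), with the finite-$X$ degenerate case disposed of by direct inspection.
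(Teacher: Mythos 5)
Your proposal is correct and follows the same route as the paper: take the least integer $d > 36 \cdot \mdim(h)$, observe $d \leq 1 + 36 \cdot \mdim(h)$, and apply Theorem~\ref{T_6227_TrueEst}. Your extra care with the degenerate cases ($\mdim(h) = \infty$, and $X$ finite, where Theorem~\ref{T_6227_TrueEst} does not literally apply since it assumes $X$ infinite) addresses points the paper's one-line proof silently skips.
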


\begin{proof}
Let $d$ be the least integer such that $d > 36 \cdot \mdim (h)$.
Then $\rc ( C^* (\Z, X, h) ) \leq d$
by Theorem~\ref{T_6227_TrueEst},
and $d \leq 1 + 36 \cdot \mdim (h)$.
\end{proof}

\section{Open problems}\label{Sec_Prob}

\indent
The obvious open problem is to strengthen the inequality
$\rc ( C^* (\Z, X, h) ) \leq 1 + 36 \cdot \mdim (h)$
to $\rc ( C^* (\Z, X, h) ) \leq \frac{1}{2} \mdim (h)$.
By Theorem~9.3 of~\cite{Gtm},
% Preprint version ???
if $(X, h)$ has a finite dimensional aperiodic factor,
then the constant $36$ in Theorem~5.1 of~\cite{Lnd}
can be improved to~$16$.
In particular,
if $(X, h)$ is minimal,
then $\rc ( C^* (\Z, X, h) ) \leq 1 + 16 \cdot \mdim (h)$.
On the other hand,
by Theorem~1.3 of~\cite{LndTs},
for every $d \in \N$ there exists
a compact metric space~$X$
and a minimal homeomorphism $h \colon X \to X$
such that $\mdim (h) = d / 2$ but $(X, h)$ cannot
be embedded into the the shift on $([0, 1]^d)^{\Z}$.
Thus,
presumably the best possible result gotten by combining the results
in this paper with an embedding theorem for \mh{s}
would be $\rc ( C^* (\Z, X, h) ) \leq 1 + 2\cdot \mdim (h)$.
% It is not possible to do this the embedding method used here,
% since there are examples of \mh{s} $h$
% which do not embed in any shift $K^{\Z}$
% with $\dim (K) < 2 \mdim (h)$.
% See Theorem~1.3 of~\cite{LndTs}.
A proof that
$\rc ( C^* (\Z, X, h) ) \leq \frac{1}{2} \mdim (h)$
seems to require that one work much more carefully
with direct systems of recursive subhomogeneous algebras.
Nevertheless,
there are potentially interesting smaller improvements,
some of which lead to questions which are interesting
in their own right.

Let $h \colon X \to X$
be a minimal homeomorphism.
It is known,
and easy to prove (see Proposition~2.7 of~\cite{LW})
that $\mdim (h^n) = n \cdot \mdim (h)$.
If the conjecture $\rc ( C^* (\Z, X, h) ) = \frac{1}{2} \mdim (h)$
is correct,
then the following conjecture must hold.

\begin{cnj}\label{Cj_6301_hnh}
Let $X$ be a \cms,
let $n \in \N$,
and let $h \colon X \to X$
be a homeomorphism such that $h^n$ is minimal.
Then
\[
\rc ( C^* (\Z, X, h^n) ) = n \cdot \rc ( C^* (\Z, X, h) ).
\]
\end{cnj}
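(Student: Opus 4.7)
The plan is to reduce Conjecture~\ref{Cj_6301_hnh} to an equality of radii of comparison for a natural free $\Z/n\Z$-action on an auxiliary crossed product, via two complementary Morita equivalences built from an $n$-fold cover of $(X, h)$. Let $\tilde{X} = X \sqcup X \sqcup \cdots \sqcup X$ ($n$ disjoint copies, denoted $(x, i)$ for $x \in X$ and $i \in \{0, 1, \ldots, n - 1\}$), and define $\tilde{h} \colon \tilde{X} \to \tilde{X}$ by $\tilde{h}(x, i) = (x, i + 1)$ for $0 \leq i < n - 1$ and $\tilde{h}(x, n - 1) = (h^n(x), 0)$, so that $\tilde{h}^n|_{X \times \{i\}} = h^n$. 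Minimality of $h^n$ passes to $\tilde{h}$, and the projection $p = 1_{X \times \{0\}}$ is full in $A := C^*(\Z, \tilde{X}, \tilde{h})$ with corner $pAp \cong C^*(\Z, X, h^n)$. Since the unitaries $u^i$ (for $u$ the canonical unitary of $A$) give $n$ mutually orthogonal equivalent projections $u^i p u^{-i} = 1_{X \times \{i\}}$ summing to $1$, one obtains $A \cong M_n(C^*(\Z, X, h^n))$. Applying Proposition~\ref{P_5620_BRTTW} to $\langle 1_{M_n(B)} \rangle = n \langle 1_B \rangle$ in $\Cu(M_n(B)) = \Cu(B)$ yields the matrix-scaling identity $\rc(M_n(B)) = \tfrac{1}{n}\rc(B)$, and therefore
\[
\rc(A) = \tfrac{1}{n}\rc(C^*(\Z, X, h^n)).
\]

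Now define $\sigma \colon \tilde{X} \to \tilde{X}$ by $\sigma(x, i) = (h^{-1}(x), i + 1)$ for $0 \leq i < n - 1$ and $\sigma(x, n - 1) = (h^{n - 1}(x), 0)$. Direct verification gives $\sigma^n = \id_{\tilde{X}}$, shows that $\sigma^k$ has no fixed points for $0 < k < n$, and confirms $\sigma \circ \tilde{h} = \tilde{h} \circ \sigma$. The orbit map $\Phi \colon \tilde{X} \to X$, $\Phi(x, i) = h^i(x)$, realizes $(\tilde{X}, \tilde{h})$ as a Galois $\Z/n\Z$-cover of $(X, h)$. Extending $\sigma$ to $A$ by $\sigma(u) = u$ produces a pointwise outer $\Z/n\Z$-action (using minimality of $\tilde{h}$ and freeness of $\sigma$ on $\tilde{X}$), and the fixed-point subalgebra computes as
\[
A^\sigma \cong C^*(\Z, X, h),
\]
since $C(\tilde{X})^\sigma = \Phi^* C(X)$ and $u \in A^\sigma$ implements $h$ on $\Phi^* C(X)$ via $\Phi \circ \tilde{h} = h \circ \Phi$. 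Combining the two identifications, Conjecture~\ref{Cj_6301_hnh} is equivalent to the single equality
\[
\rc(A) = \rc(A^\sigma).
\]

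The main obstacle is to establish this last equality. A natural strategy is to verify that $\sigma$ has the tracial Rokhlin property on $A$ and then invoke the expected preservation of $\rc$ under passage to the fixed-point subalgebra (in the spirit of Archey--Phillips and Hirshberg--Phillips). The cyclic partition $\{1_{X \times \{i\}}\}_{i = 0}^{n - 1} \subset C(\tilde{X})$ is already a strict Rokhlin partition for $\sigma$ in $A$, but it is not asymptotically central because its projections do not commute with $u$. To upgrade it to a tracial Rokhlin partition, one would cut these projections against a tall Rokhlin tower for $\tilde{h}$ with $\sigma$-invariant base, of the type produced in Section~\ref{Sec_Rokh}, so that on the bulk of the tower levels the modified projections approximately commute with $u$ while still being exactly permuted by $\sigma$; the main combinatorial challenge is to preserve this cyclic intertwining during the cutting. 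An alternative, Rokhlin-free route is to compare directly the recursive subhomogeneous decompositions of suitable centrally large orbit-breaking subalgebras of $A$ and $A^\sigma$ provided by Theorem~\ref{T_5619_RSHA} and match their Cuntz comparison behavior via $\Phi$; this, however, would require a $\sigma$-equivariant refinement of Sections~\ref{Sec_Rokh} and~\ref{Sec_OBSubSf}.
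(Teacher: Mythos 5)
The statement you are addressing is presented in the paper as an open conjecture; the paper offers no proof of it, only a heuristic discussion of possible approaches, so there is no proof of record to compare yours against. Your reduction is correct as far as it goes: the induced system $(\tilde X, \tilde h)$ does satisfy $C^*(\Z, \tilde X, \tilde h) \cong M_n(C^*(\Z, X, h^n))$, the scaling $\rc(M_n(B)) = \tfrac{1}{n}\rc(B)$ is available from the framework of Proposition~\ref{P_5620_BRTTW}, the order-$n$ homeomorphism $\sigma$ is free, commutes with $\tilde h$, and has quotient $(X, h)$, and the fixed point algebra of the induced action on $A = C^*(\Z, \tilde X, \tilde h)$ is indeed $C^*(\Z, X, h)$. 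This is essentially a Takai-dual reformulation of the paper's own discussion following the conjecture, where the problem is recast as $\rc(B^{\beta}) = n \cdot \rc(B)$ for the restricted dual action $\beta$ of $\Z / n \Z$ on $B = C^*(\Z, X, h)$; your covering-space picture and the dual-action picture carry the same information.

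The genuine gap is the final step, which you acknowledge but do not close: after your two identifications, the equality $\rc(A) = \rc(A^{\sigma})$ is literally equivalent to the conjecture, so nothing has been proved --- the difficulty has only been relocated. No known theorem supplies this equality: preservation (or controlled scaling) of the radius of comparison under passage to fixed point algebras of finite group actions with tracial-Rokhlin-type hypotheses is precisely the open problem recorded as Question~\ref{Q_6301_rcCP}, and the one case where a conclusion is known, the genuine Rokhlin property, typically forces absorption of a UHF algebra and hence $\rc = 0$, which is useless in the regime of interest. Your proposed upgrade of the cyclic partition $\{ 1_{X \times \{i\}} \}_{i = 0}^{n - 1}$ to a tracial Rokhlin partition also meets a concrete obstruction: when $X$ is connected, $A$ may have very few projections, so the projection-based tracial Rokhlin property is unavailable, and for the projection-free versions there is no comparison result for the radius of comparison of fixed point algebras or crossed products. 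The proposal should therefore be regarded as a correct and potentially useful reformulation of the conjecture, not a proof.
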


Proving this conjecture directly would enable one to improve
the inequality
$\rc ( C^* (\Z, X, h) ) \leq 1 + 36 \cdot \mdim (h)$
to
\begin{equation}\label{Eq_6301_StSt}
\rc ( C^* (\Z, X, h) ) \leq 36 \cdot \mdim (h),
\end{equation}
and thus deduce the main result of~\cite{EllNiu},
as follows.
Suppose first that there are infinitely many $n \in \N$
such that $h^n$ is minimal.
For such~$n$,
Corollary~\ref{C_6227_StatedEst} gives
$\rc ( C^* (\Z, X, h^n) ) \leq 1 + 36 \cdot \mdim (h^n)$.
Applying Conjecture~\ref{Eq_6301_StSt}
and Proposition~2.7 of~\cite{LW},
and dividing by~$n$,
would give
\[
\rc ( C^* (\Z, X, h) ) \leq \frac{1}{n} + 36 \cdot \mdim (h).
\]
So~(\ref{Eq_6301_StSt}) would follow.
Otherwise,
there must exist infinitely many prime numbers $p$
such that $h^p$ is not minimal.
For such~$p$,
let $Z \subset X$ be a nonempty minimal set for $h^p$.
For any two of the sets
$Z, h (Z), \ldots, h^{p - 1} (Z)$,
the intersection is invariant under~$h^p$,
so these two sets must be equal or disjoint.
Since $p$ is prime,
if any two are equal then they all are,
so the sets on this list are all distinct.
These sets cover~$X$,
so $X$ has at least $p$ connected components.
Since $p$ can be arbitrarily large,
we see that $X$ has infinitely many connected components.
Now $\rc ( C^* (\Z, X, h) ) \leq \frac{1}{2} \cdot \mdim (h)$
by~\cite{HPT}.
% by ??? of~\cite{HPT}.
% \label{T_5409_rcCP}

To approach Conjecture~\ref{Eq_6301_StSt},
let $u \in C^* (\Z, X, h)$ be the standard unitary,
as in Notation~\ref{N_5429_CPN}.
Let $\bt \colon \Z / n \Z \to \Aut \big( C^* (\Z, X, h) \big)$
be the action obtained by restricting the dual action on
$C^* (\Z, X, h)$ to $\Z / n \Z \subset S^1$.
Thus,
$\bt$ is generated by the automorphism
$\bt_1 \in \Aut \big( C^* (\Z, X, h) \big)$
such that $\bt_1 (f) = f$ for all $f \in C (X)$
and $\bt_1 (u) = e^{2 \pi i / n} u$.
The fixed point algebra $C^* (\Z, X, h)^{\bt}$ is $C^* (\Z, X, h^n)$.
So Conjecture~\ref{Eq_6301_StSt} could be proved by finding
suitable conditions on an action $\bt \colon \Z / n \Z \to \Aut (B)$
under which under which
\begin{equation}\label{Eq_6301_rcFP}
\rc ( B^{\bt} ) = n \cdot \rc (B).
\end{equation}
To get a heuristic idea of what such conditions should be,
consider the following three elementary examples:
\begin{enumerate}
\item\label{6301_hnh_Triv}
The trivial action of $\Z / n \Z$ on~$\C$.
\item\label{6301_hnh_Trans}
The action of $\Z / n \Z$ on $\C^n \cong C (\Z / n \Z)$
coming from the action of $\Z / n \Z$ on itself by translation.
\item\label{6301_hnh_AdReg}
With $\om = e^{2 \pi i / n}$,
the action of $\Z / n \Z$ on $M_n$ generated by
conjugation by
$\diag (1, \om, \ldots, \om^{n - 1} )$.
% $\Ad \big( \diag (1, \om, \ldots, \om^{n - 1} ) \big)$.
\end{enumerate}
The relation~(\ref{Eq_6301_rcFP})
holds with $B$ and $\bt$ as in (\ref{6301_hnh_AdReg})
but not as in (\ref{6301_hnh_Triv}) or~(\ref{6301_hnh_Trans}).

It seems perhaps more useful to consider crossed products,
recalling that the original algebra is the fixed point algebra
of the dual action on the crossed product.
We thus want conditions on an action
$\af \colon \Z / n \Z \to \Aut (A)$
which imply that
(\ref{Eq_6301_rcFP}) holds with $\bt = {\widehat{\af}}$,
that is,
\[
\rc \big( C^* ( \Z / n \Z, \, A, \, \af ) \big)
 = \frac{1}{n} \rc (A).
\]
This happens
if $\af$ is the action
in~(\ref{6301_hnh_Trans}) above,
which suggests the following question.

\begin{qst}\label{Q_6301_rcCP}
Let $A$ be a simple unital \ca.
Suppose that $G$ is a finite group and that
$\af \colon G \to \Aut (A)$ is an action satisfying
a sufficiently strong outerness condition,
such as a higher dimensional Rokhlin property
or a version of the tracial Rokhlin property
which doesn't use projections.
Does it follow that $\rc (C^* (G, A, \af)) = \card (G)^{-1} \rc (A)$?
\end{qst}

The easiest case in Question~\ref{Q_6301_rcCP}
is surely when $\af$ has the Rokhlin property.
Unfortunately, this case is likely to be not very interesting.
In many cases (for example, see Theorem~3.5 of~\cite{Iz2}),
if $\af$ has the Rokhlin property
then $A$ is stable under tensoring with a UHF algebra.
If $A$ is simple and stably finite,
then automatically $\rc (A) = 0$.

In a different direction,
we point out that the embedding result we use,
Theorem~5.1 of~\cite{Lnd},
does not require minimality;
it only requires that the system $(X, h)$
have a factor system
which is a \mh{} of an infinite compact metric space.
This suggests that one might try to generalize the inequality
$\rc ( C^* (\Z, X, h) ) \leq 1 + 36 \cdot \mdim (h)$
(Corollary~\ref{C_6227_StatedEst})
or the conjectured inequality
$\rc ( C^* (\Z, X, h) ) \leq \frac{1}{2} \mdim (h)$
to \hme{s} of compact metric spaces which have such factors.
Explicitly,
consider the following question.

\begin{qst}\label{Q_6302_MinFactor}
Let $X$ be a compact metric space,
and let $h \colon X \to X$ be a \hme.
Suppose there are an infinite compact metric space~$Y$,
a \mh{} $k \colon Y \to Y$,
and a \ct{} surjection $g \colon X \to Y$
such that $g \circ h = k \circ g$.
Suppose that $C^* (\Z, X, h)$ is finite.
Does it follow that
$\rc ( C^* (\Z, X, h) ) \leq 1 + 36 \cdot \mdim (h)$?
Does it follow that
$\rc ( C^* (\Z, X, h) ) \leq \frac{1}{2} \mdim (h)$?
\end{qst}

It seems likely that some condition
like finiteness
will be needed.
The radius of comparison is not expected to behave well
without finiteness,
but the other conditions in Question~\ref{Q_6302_MinFactor}
do not imply that $C^* (\Z, X, h)$ is finite.
For example,
let $Y$ be the Cantor set,
let $k \colon Y \to Y$ be any \mh,
take $X = \big( \Z \cup \{ \pm \I \} \big) \times Y$,
take $g \colon X \to Y$ to be the projection to the
second coordinate,
and take $h \colon X \to X$ to be
$h (n, y) = (n + 1, \, g (y) )$
for $y \in Y$ and $n \in \Z \cup \{ \pm \I \}$,
with the convention $- \I + 1 = - \I$
and $\I + 1 = \I$.
Then $C^* (\Z, X, h)$ is not finite.
% (This example in considered in~\cite{???}.)

One major difficulty in answering Question~\ref{Q_6302_MinFactor}
is the generalization of the theory of large subalgebras
to the nonsimple case.
Nothing is known about this.

\end{document}